\theoremstyle{plain}
    \newtheorem{thm}{Theorem}[section]
    \newtheorem{lem}[thm]   {Lemma}
    \newtheorem{cor}[thm]   {Corollary}
    \newtheorem{prop}[thm]  {Proposition}
\theoremstyle{definition}
    \newtheorem{defn}[thm]  {Definition}
    \newtheorem{prob}[thm] {Problem}
    \newtheorem{ex}[thm]{Example}
    \newtheorem{rem}[thm]{Remark}
\def\max{\mathrm{max}}
\def\cat{\mathsf{cat}}
\def\secat{\mathsf{secat}}
\def\dim{\mathrm{dim}}
\def\ker{\operatorname{ker}}
\def\nil{\operatorname{nil}}
\newcommand{\be}{\begin{enumerate}}
\newcommand{\ee}{\end{enumerate}}
\newcommand{\R}{\mathbb{R}}
\newcommand{\Z}{\mathbb{Z}}
\newcommand{\N}{\mathbb{N}}
\newcommand{\C}{\mathbb{C}}
\newcommand{\TC}{{\sf TC}}
\newcommand{\cld}{{\sf cd}}
\newcommand{\paths}[1]{P#1}
\newcommand{\eval}[1]{\pi_{#1}}
\newcommand{\evalG}[1]{p_{#1}}
\newcommand{\evaln}[1]{\pi_{n,#1}}
\begin{document}

\title[Equivariant TC]{Equivariant topological complexities}

\author{Mark Grant}

\address{Institute of Mathematics,
Fraser Noble Building,
University of Aberdeen,
Aberdeen AB24 3UE,
UK}

\email{mark.grant@abdn.ac.uk}

\date{\today}

\keywords{equivariant topological complexity}
\subjclass[2020]{55M30, 55P91 (Primary); 68T40, 55N91, 55R91 (Secondary).}

\begin{abstract} Many mechanical systems have configuration spaces that admit symmetries. Mathematically, such symmetries are modelled by the action of a group on a topological space. Several variations of topological complexity have emerged that take symmetry into account in various ways, either by asking that the motion planners themselves admit compatible symmetries, or by exploiting the symmetry to motion plan
between functionally equivalent configurations. We will survey the main definitions due to Colman-Grant, Lubawski-Marzantowicz, B\l{}aszczyk-Kaluba and Dranishnikov, and some related notions. We conclude with a short list of open problems.
\end{abstract}

\maketitle
\section{Introduction}\label{sec:intro}

As seen in previous chapters, the topological complexity $\TC(X)$ provides an interesting measure of the complexity (from a topological perspective) of the motion planning problem for a mechanical system with $X$ as its configuration space. In many naturally occurring examples, the configuration space admits non-trivial symmetries, which one may wish to take into account when designing motion planning algorithms. This leads to several variations on the notion of topological complexity, which will be surveyed in this chapter.

Symmetry is a central concept in mathematics, and symmetries of topological spaces are particularly well studied. The set of symmetries forms a \emph{group} $G$, which \emph{acts} on the configuration space $X$. Precise definitions will follow in the next section; for the purposes of this introduction, we think it best to illustrate the concept with some relevant examples.

\begin{ex}\label{ex:Intro1}
Consider a planar mechanism (such as a robot arm), one component of which is anchored to a point in the plane. Denote its configuration space by $Y$. Now add an extra revolute joint, giving one more degree of freedom. We imagine creating a spatial mechanism by basing the anchor of the arm to a rotating platform or circular track in $3$-space. The configuration space of this new mechanism is the topological product $X:=S^1\times Y$, where $S^1$ is the unit circle.

\begin{minipage}{0.49\linewidth}
\begin{center}
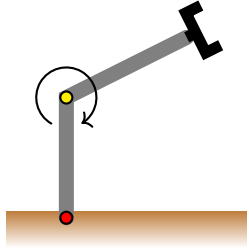

\begin{tikzpicture}
[line cap=round,scale=0.8]
\shade[top color=brown,bottom color=white] (-1,-.5) rectangle (3,0.1);
\draw[line width=2mm,gray](0,0)--(0,2)--(2,3);
\draw[thick,fill=red] (0,0) circle [radius=1mm];
\draw[thick,fill=yellow] (0,2) circle [radius=1mm];
\draw[->,thick](0,2) ++(240:5mm) arc (240:-60:5mm);
\draw[line width=1.5mm,line cap=butt,-[](2,3)--(2.4,3.2);
\path (-3,-0.7) rectangle (5,4);
\end{tikzpicture}
\captionsetup{width=0.8\linewidth}
\captionof{figure}{A configuration in $Y$}
\end{center}
\end{minipage}
\begin{minipage}{0.49\linewidth}
\begin{center}
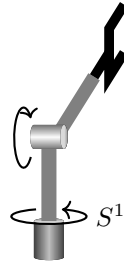

\begin{tikzpicture}
[yscale=0.8]
\fill[left color=gray!50!black,right color=gray!50!black,middle color=gray!50,shading=axis,opacity=0.25] (.2,0) -- (.2,.6) arc (360:180:.2cm and 0.05cm) -- (-.2,0) arc (180:360:.2cm and 0.05cm);
\fill[top color=gray!90!,bottom color=gray!2,middle color=gray!30,shading=axis,opacity=0.25] (0,.6) circle (.2cm and 0.05cm);
\draw[line width=2mm,gray](0,.6)--(0,1.8);
\draw[line width=1.8mm,gray](.1,2)--(.6,3);
\draw[line width=1.3mm](.6,3)--(.8,3.4)--(.8,3.0)--(1,3.4);
\draw[line width=1.3mm](.8,3.4)--(.8,3.8)--(1,4.2);
\fill[white](1,3) rectangle (2,4.5);
\fill[top color=gray!90!black,bottom color=gray!90!black,middle color=gray!2,shading=axis,opacity=1] (-.2,2.2) -- (.2,2.2) arc (450:270:.05cm and 0.2cm) -- (-.2,1.8) arc (270:90:.05cm and 0.2cm);
\fill[right color=gray!90!,left color=gray!30,middle color=gray!30,shading=axis,opacity=1] (.2,2) circle (.05cm and 0.2cm);
\draw[->,thick](-.1,.6)  ++(110:.2075cm) arc (110:430:.5cm and .125cm);
\draw[->,thick](-.4,1.8)  ++(320:.2075cm) arc (320:40:.125cm and .5cm);
\node[right] at (.5,.7){$S^1$};
\end{tikzpicture}
\captionsetup{width=0.9\linewidth}
\captionof{figure}{A configuration in $S^1\times Y$}
\end{center}
\end{minipage}

\vspace{1em}

This new configuration space has an obvious rotational symmetry. Mathematically speaking, the \emph{group} $S^1$ (of unit complex numbers, say) \emph{acts} on $X$ by a formula $z(w,y)=(zw,y)$ for $z,w\in S^1$ and $y\in Y$. It seems reasonable to suggest that motion planners in $X$ should be \emph{invariant} under this symmetry, namely that the motion from $(zw_1,y_1)$ to $(zw_2,y_2)$ should be the $z$-rotation of the motion from $(w_1,y_1)$ to $(w_2,y_2)$, for all $z\in S^1$ and $(w_1,y_1),(w_2,y_2)\in S^1\times Y$. Whether or not this condition is enforced, it could play a role in the design of motion planning algorithms.
\end{ex}

\begin{ex}[from \cite{LubMar}]\label{ex:Intro2}
Consider a planar robot arm with $n$ revolute joints and a ``hand" at the end of the arm for grasping objects. Ignoring collisions, the configuration space $X=(S^1)^n$ is an $n$-dimensional torus. Pictured below is the case $n=6$. There is an obvious symmetry of the arm which exchanges the two ``fingers" of the ``hand".

\begin{minipage}{0.49\linewidth}
\begin{center}
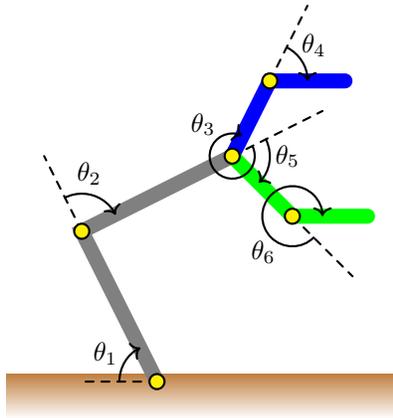

\begin{tikzpicture}
[line cap=round,scale=1]
\shade[top color=brown,bottom color=white] (-2,-.5) rectangle (3.2,0.1);
\draw[thick, dashed](0,0)--(-1,0);
\draw[thick, dashed](-1,2)--(-1.5,3);
\draw[thick, dashed](1,3)--(2.2,3.6);
\draw[thick, dashed](1.5,4)--(2,5);
\draw[thick, dashed](1.8,2.2)--(2.6,1.4);
\draw[line width=2mm,gray](0,0)--(-1,2)--(1,3);
\draw[line width=2mm,blue](1,3)--(1.5,4)--(2.5,4);
\draw[line width=2mm,green](1,3)--(1.8,2.2)--(2.8,2.2);
\draw[thick,fill=yellow] (0,0) circle [radius=1mm];
\node[above left] at (-.4,0.1){$\theta_1$};
\node[above] at (-.9,2.5){$\theta_2$};
\node[above left] at (.9,3.15){$\theta_3$};
\node[above right] at (1.8,4.2){$\theta_4$};
\node[right] at (1.45,3){$\theta_5$};
\node[below left] at (1.7,2){$\theta_6$};
\draw[thick,fill=yellow] (-1,2) circle [radius=1mm];
\draw[thick,fill=yellow] (1,3) circle [radius=1mm];
\draw[thick,fill=yellow] (1.5,4) circle [radius=1mm];
\draw[thick,fill=yellow] (1.8,2.2) circle [radius=1mm];
\draw[->,thick](0,0) ++(180:5mm) arc (180:116:5mm);
\draw[->,thick](-1,2) ++(117:5mm) arc (117:26:5mm);
\draw[->,thick](1,3) ++(26:5mm) arc (26:-45:5mm);
\draw[->,thick](1,3) ++(26:3mm) arc (26:-296:3mm);
\draw[->,thick](1.8,2.2) ++(315:4mm) arc (-45:-360:4mm);
\draw[->,thick](1.5,4) ++(63:5mm) arc (63:0:5mm);
\end{tikzpicture}
\captionsetup{width=0.7\linewidth}
\captionof{figure}{A configuration $x=(\theta_1,\ldots, \theta_6)$ in $X$}
\end{center}
\end{minipage}
\begin{minipage}{0.49\linewidth}
\begin{center}
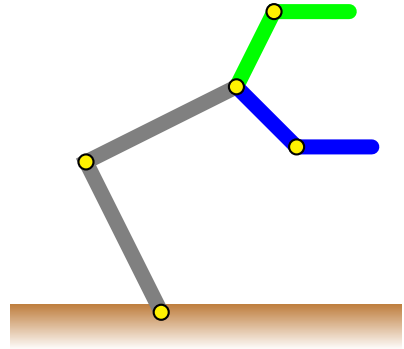

\begin{tikzpicture}
[line cap=round,scale=1]
\shade[top color=brown,bottom color=white] (-2,-.5) rectangle (3.2,0.1);
\draw[line width=2mm,gray](0,0)--(-1,2)--(1,3);
\draw[line width=2mm,green](1,3)--(1.5,4)--(2.5,4);
\draw[line width=2mm,blue](1,3)--(1.8,2.2)--(2.8,2.2);
\draw[thick,fill=yellow] (0,0) circle [radius=1mm];
\draw[thick,fill=yellow] (-1,2) circle [radius=1mm];
\draw[thick,fill=yellow] (1,3) circle [radius=1mm];
\draw[thick,fill=yellow] (1.5,4) circle [radius=1mm];
\draw[thick,fill=yellow] (1.8,2.2) circle [radius=1mm];
\path (-2,-.5) rectangle (3.2,5);
\end{tikzpicture}
\captionsetup{width=0.7\linewidth}
\captionof{figure}{The symmetric configuration $\tau x$}
\end{center}
\end{minipage}

\vspace{1em}
 In terms of the joint angles, this is given by
\[
(\theta_1,\theta_2,\theta_3,\theta_4,\theta_5,\theta_6)\mapsto (\theta_1,\theta_2,\theta_5,\theta_6,\theta_3,\theta_4) 
\]
Mathematically speaking, the cyclic group of order two $C_2=\{1,\tau\}$ acts on $X$ by permuting coordinates. In this example, a configuration $x$ and its image $\tau x$ under the involution are physically different, but \emph{functionally equivalent}---the spatial configuration of the robot is the same up to a relabeling of its parts. This suggests that rather than having to navigate a path between $x$ and $\tau x$ to perform tasks, the robot should instead be able to ``recalibrate" and exploit the symmetries to make its task easier. 
\end{ex}  

These two examples are of a very different nature. Example \ref{ex:Intro1} is supposed to illustrate that it may be natural to ask for motion planners that are themselves symmetric. Example \ref{ex:Intro2} illustrates that there may be functionally equivalent symmetric configurations, in which case there is no need to navigate a path in the configuration space between them in order to perform tasks. There are many possible design requirements, and the mathematical discipline of equivariant topology provides an unending supply of examples of configuration spaces with symmetry to illustrate the various phenomena that can occur. This aspect, combined with the differing motivations of the researchers involved, explains why several different definitions of topological complexity with symmetry have emerged in a relatively short space of time. The catalyst for this seems to have been the Arbeitsgemeinschaft in Topological Robotics held at the MFO in 2010, organized by Michael Farber, Jes\'us Gonz\'alez and Dirk Sch\"utz, at which the author and Hellen Colman began discussing the ideas that led to \cite{ColmanGrant}. The subject also featured heavily at the Mini-Workshop ``Topological Complexity and Related Topics" held at the MFO in 2016, organized by the author, Greg Lupton and Lucile Vandembroucq. We would like to thank the MFO and the various organizers and co-organizers for providing such a stimulating atmosphere.

There already exists an excellent survey article with the same title as this one, written by \'Andres \'Angel and Hellen Colman \cite{AngelColmanSurvey}. While there is necessarily considerable overlap in content between that article and this chapter, we have tried to add to the narrative, and to cover developments since the publication of \cite{AngelColmanSurvey}. We also include one new result, a cohomological lower bound for Dranishnikov's strongly equivariant topological complexity (Proposition \ref{prop:lowerTC^*_G}). We conclude our survey with a list of what we believe to be interesting research problems in the area.

The structure of this survey is as follows. In Section \ref{S:actions} we collect some basic definitions about group actions which we will need, so as not to interrupt the flow later on. Sections \ref{S:TC_G}, \ref{S:TC^G}, \ref{S:TC^*_G} and \ref{S:TCeff} focus on, respectively: the equivariant topological complexity $\TC_G(X)$ of Colman and the author \cite{ColmanGrant}; the invariant topological complexity $\TC^G(X)$ of Lubawski and Marzantowicz \cite{LubMar}; the strongly equivariant topological complexity $\TC^*_G(X)$ of Dranishnikov; and the effective topological complexity $\TC^{G,\infty}(X)$ of B\l{}aszczyk and Kaluba \cite{BlaKal}. The final Section \ref{S:Problems} contains our list of open problems.

We adopt the convention of this volume, that all $\TC$ and $\cat$ invariants are reduced. Thus some of the formulae from the original references \cite{ColmanGrant,LubMar,BlaKal} have been adjusted to account for this.

\section{Group actions}\label{S:actions}

In this section we briefly recall some of the basic definitions from the theory of group actions. The reader unfamiliar with the theory might wish to consult one of the classic texts on the subject for further details, for example \cite{Bredon} or \cite{tomDieck}. Further definitions will be supplied in the text as required. 

Recall that a \emph{group} is a set $G$ equipped with an associative binary operation $\mu: G\times G\to G$ that admits an identity element $e\in G$ satisfying $\mu(e,g)=g=\mu(g,e)$ for all $g\in G$, and such that each $g\in G$ admits an inverse $g^{-1}\in G$ satisfying $\mu(g,g^{-1})=e=\mu(g^{-1},g)$. The inverse is easily shown to be unique, hence we may define a function $\nu:G\to G$, called \emph{inversion}, by setting $\nu(g)=g^{-1}$. A \emph{topological group} is a group $G$ equipped with a topology such that the operation $\mu: G\times G\to G$ and inversion $\nu: G\to G$ are continuous functions. A \emph{Lie group} is a group $G$ equipped with the structure of a smooth manifold such that $\mu:G\times G\to G$ and $\nu: G\to G$ are smooth maps.

\begin{defn} \label{def:action}
Let $X$ be a space (respectively, smooth manifold), and let $G$ be a toplogical (respectively,  Lie) group. An \emph{action} of $G$ on $X$ is a continuous (respectively, smooth) function $\alpha: G\times X\to X$ satisfying:
\begin{enumerate}
\item $\alpha(g,\alpha(h,x))=\alpha(\mu(g,h),x)$ for all $g,h\in G$ and $x\in X$;
\item $\alpha(e,x)=x$ for all $x\in X$, where $e\in G$ is the identity element.
\end{enumerate}
\end{defn}

In theoretical work one often drops the names of the operations $\mu$ and $\alpha$ from the notation, and writes these using concatenation and appropriate parentheses. Thus the associativity property from the definition of a group becomes $g(hk)=(gh)k$, and property (1) from the definition of an action becomes $g(hx)=(gh)x$.

\begin{defn} Let $X$ and $Y$ be spaces, each with an action of $G$. A continuous function $\phi :X\to Y$ is called \emph{$G$-equivariant} if $\phi(gx)=g\phi(x)$ for all $g\in G$ and $x\in X$.
\end{defn}

For brevity, we will refer to a space equipped with an action of a group $G$ as a \emph{$G$-space}, and to a continuous $G$-equivariant function as a \emph{$G$-map}.

\begin{defn} Let $X$ be a $G$-space, and let $A\subseteq X$ be a subset. The \emph{$G$-saturation} of $A$ is the set
\[
GA:=\{ga \mid g\in G, a\in A\}\subseteq X.
\]
If $GA=A$, then $A$ is called \emph{$G$-invariant}.

As a special case of the above, let $A=\{x\}$ be a singleton. We write $Gx$ instead of $G\{x\}$, and call this the \emph{$G$-orbit} of $x\in X$.
\end{defn}

If $A\subseteq X$ is $G$-invariant then the $G$-action on $X$ restricts to $A$, making $A$ a $G$-space. The inclusion $\iota_A:A\hookrightarrow X$ is then a $G$-map, and $(X,A)$ is called a \emph{pair of $G$-spaces}.

A \emph{subgroup} of $G$ is a subset $H\subseteq G$ such that the binary operation $\mu: G\times G\to G$ restricts to an operation $\mu|_{H\times H}: H\times H \to H$ satisfying the group axioms (it suffices to check that $\mu(g,h^{-1})\in H$ whenever $g,h\in H$). A subgroup of a topological group is a topological group, and a \emph{closed} subgroup of a Lie group is a Lie group. We will write $H\leq G$ to signify that $H$ is a subgroup of $G$. Given $H\leq G$ and a $G$-action on a space $X$, restriction gives an $H$-action on $X$ of the same type.

A subgroup $H\leq G$ is \emph{normal} if $ghg^{-1}\in H$ whenever $g\in G$ and $h\in H$. Under this condition, we can form the \emph{quotient group} $G/H$ whose elements are the cosets $gH$ and whose binary operation is given by $\mu(gH,g'H)=gg'H$.

\begin{defn} Let $X$ be a $G$-space, and let $H\leq G$ be a subgroup. The \emph{$H$-fixed point set} of $X$ is
\[
X^H:=\{ x\in X \mid hx=x\mbox{ for all }h\in H\}\subseteq X.
\]
\end{defn}

Note that a $G$-map $f:X\to Y$ induces a map
\[
f^H:=f|_{X^H}:X^H\to Y^H
\]
for every subgroup $H\leq G$.

\begin{defn}
Let $X$ be a $G$-space, and let $x\in X$. The \emph{isotropy subgroup} of $x$ is
\[
G_x:=\{g\in G \mid gx=x\} \leq G.
\]
\end{defn}

Note that if $f:X\to Y$ is a $G$-map then $G_x \leq G_{f(x)}$ for all $x\in X$.

Many of the results in this survey require some additional hypotheses about the group $G$, the space $X$, or the action of $G$ on $X$. Common hypotheses including assuming that $G$ is a compact Lie group, or that $G$ is a finite group (with the discrete topology; equivalently, a compact Lie group of dimension $0$). Often arguments require some point-set assumptions on $X$, such as metrizability, being an ANR (absolute neighbourhood retract), or being (completely) normal, but it is not clear whether the result is true without such hypotheses. When results as they originally appeared in the literature admit straightforward generalizations, we have tried to indicate this. We do not claim to have found the most general form of each result, however.

Common assumptions on the action of $G$ on $X$ include:

\begin{itemize}
\item freeness: an action is  \emph{free} if $G_x=\{e\}$ for all $x\in X$;
\item the existence of a fixed point $x_0\in X^G$;
\item properness: an action is \emph{proper} if for any compact subset $C\subseteq X$ the set
\[
\{g\in G \mid C\cap gC\neq\emptyset\}
\]
is compact.
\end{itemize}
It is known that any smooth action of a compact Lie group $G$ on a manifold $X$ is proper, and is obvious that any action of a finite group $G$ is proper. Note that properness implies that every isotropy group $G_x$ is compact.

\begin{defn}
Let $X$ be a $G$-space. The \emph{orbit space} of a $G$-space $X$ is the set of orbits
\[
X/G :=\{ Gx \mid x\in X\},
\]
given the quotient topology via the surjection $X\to X/G$ sending $x$ to its orbit $Gx$.
\end{defn}

For general actions the orbit space can be fairly upsetting as a topological space. For instance, if $G$ is non-compact, then $X/G$ can fail to be Hausdorff, even if $X$ is Hausdorff. However, if $G$ is a compact Lie group acting freely on a smooth manifold $X$, then $X/G$ is in a natural way a smooth manifold and the orbit map $X\to X/G$ is a smooth submersion.

We now turn to the basic definitions of equivariant homotopy theory.
 
\begin{defn} Let $X$ and $Y$ be $G$-spaces, and let $\phi,\psi:X\to Y$ be $G$-maps. A \emph{$G$-homotopy} from $\phi$ to $\psi$ is a $G$-map $H: X\times [0,1]\to Y$ such that $H(x,0)=\phi(x)$ and $H(x,1)=\psi(x)$ for all $x\in X$. Here $G$ acts on $X\times[0,1]$ via $g(x,t)=(gx,t)$. If such a $G$-homotopy exists, then we say $\phi$ and $\psi$ are \emph{$G$-homotopic} and write $\phi\simeq_G \psi$. This defines an equivalence relation on the set of $G$-maps from $X$ to $Y$. 
\end{defn}

In other words, two $G$-maps are $G$-homotopic if one can be continuously deformed into the other through $G$-maps.

\begin{defn}
We say that two $G$-spaces $X$ and $Y$ are {\em $G$-homotopy equivalent}, and write $X\simeq_G Y$, if there exist $G$-maps $\phi:X\to Y$ and $\psi:Y\to X$ such that $\psi\circ\phi\simeq_G\operatorname{Id}_X$ and $\phi\circ\psi\simeq_G \operatorname{Id}_Y$.
\end{defn}

The reader familiar with ordinary homotopy theory will easily be able to provide the definitions of \emph{$G$-fibration} and \emph{$G$-cofibration}.

Our working definition of a \emph{principal $G$-bundle} is as follows. Suppose $E$ is a free $G$-space such that the orbit projection map $p:E\to E/G=:B$ is a fiber bundle with fiber $G$. Then $p:E\to B$ is a principal $G$-bundle. Not every free action gives rise to a principal $G$-bundle, but free smooth actions of compact Lie groups on smooth manifolds do. There always exists a universal principal $G$-bundle $EG\to BG$, unique up to bundle isomorphism, in which the total space $EG$ is a free contractible $G$-space.

Given a $G$-space $X$ and a principal $G$-bundle $E\to B$, we can form the \emph{associated bundle with fiber $X$}, whose total space $E\times_G X$ is the orbit space of $E\times X$ under the \emph{diagonal action} of $G$ given by $g(e,x)=(ge,gx)$, and whose bundle projection $E\times_G X\to B$ is given by $[e,x]\mapsto [e]$. When $E=EG$, the total space $EG\times_G X$ is called the \emph{homotopy orbit space of $X$} (or the \emph{Borel construction on $X$}).

\begin{rem}
Definition \ref{def:action} is of a \emph{left action} of $G$ on $X$. There is a corresponding definition of \emph{right action}. We'll do everything here with left actions. It's usually an easy matter to translate between the two.
\end{rem}

\section{Equivariant topological complexity}\label{S:TC_G}

The first version of topological complexity to take symmetries into account was introduced by Colman and the author in \cite{ColmanGrant}, and called the \emph{equivariant topological complexity}. We let $\paths{X}$ denote the space of continuous paths in $X$, equipped with the compact-open topology. Note that if $G$ acts on $X$, then it also acts on $\paths{X}$ via sending a path $\gamma$ to the path $g\gamma$ defined by $(g\gamma)(t)=g\gamma(t)$, and on $X\times X$ via the \emph{diagonal action} $g(x,y)=(gx,gy)$. The end-point fibration $\eval{X}:\paths{X}\to X\times X$ is then a $G$-map.

\begin{defn}[{\cite[Definition 5.1]{ColmanGrant}}]\label{def:TC_G}
Let $X$ be a $G$-space. The \emph{equivariant topological complexity} of $X$, denoted $\TC_G(X)$, is defined to be the minimal integer $k$ such that $X\times X$ can be covered by $G$-invariant open sets $U_0,\ldots , U_k$, each admitting a $G$-map $s_i:U_i\to \paths{X}$ such that $\eval{X}\circ s_i=\iota_{U_i}:U_i\hookrightarrow X\times X$.
\end{defn}

\begin{rem} The equivariant topological complexity $\TC_G(X)$ equals $\secat_G(\eval{X})$, the \emph{equivariant sectional category} of the $G$-fibration $\eval{X}$ \cite[Definition 4.1]{ColmanGrant}. Other instances of this more general notion are considered in the subsequent articles \cite{Grant} and \cite{GrantMeirPatchkoria}. 
\end{rem}

Note that removing all occurences of the group $G$ from Definition \ref{def:TC_G} recovers the definition of $\TC(X)$. Two easy observations follow:
\begin{enumerate}
\item If the action is \emph{trivial}, meaning that $gx=x$ for all $g\in G$ and $x\in X$, then $\TC_G(X)=\TC(X)$;
\item $\TC_G(X)\geq \TC(X)$ for any $G$-space $X$. 
\end{enumerate}
Property (1) is something we would hope for and expect of any variant of $\TC(X)$ which takes symmetry into account. Property (2) tells us that forcing our motion planning rules to be symmetric does not reduce the complexity of the motion planning task.

\begin{prop}[{\cite[Theorem 5.2]{ColmanGrant}}]
The equivariant topological complexity is $G$-homotopy invariant. That is, if $X\simeq_G Y$, then $\TC_G(X)=\TC_G(Y)$.
\end{prop}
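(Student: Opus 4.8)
The plan is to show that a $G$-homotopy equivalence $f\colon X\to Y$ induces, for each $G$-invariant open set $U\subseteq Y\times Y$ admitting an equivariant local section of $\eval{Y}$, a $G$-invariant open set $V\subseteq X\times X$ admitting an equivariant local section of $\eval{X}$, and vice versa; since the two invariants are defined as minimal cardinalities of such covers minus one, equality follows. More conceptually, $\TC_G(X)=\secat_G(\eval{X})$ by the remark above, so it suffices to prove that $\secat_G$ is invariant under $G$-fibrewise $G$-homotopy equivalence and that $\eval{X}$ and $\eval{Y}$ are related by one; this is the standard pattern for sectional-category-type invariants, carried out equivariantly.

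Concretely, let $f\colon X\to Y$ and $g\colon Y\to X$ be $G$-maps with $g\circ f\simeq_G \operatorname{Id}_X$ and $f\circ g\simeq_G \operatorname{Id}_Y$. First I would record that $f$ induces $G$-maps $f\times f\colon X\times X\to Y\times Y$ (diagonal actions) and $\paths{f}\colon\paths{X}\to\paths{Y}$, $\gamma\mapsto f\circ\gamma$, which are strictly compatible with the endpoint fibrations: $\eval{Y}\circ\paths{f}=(f\times f)\circ\eval{X}$, and likewise for $g$. Now suppose $\TC_G(Y)=k$, witnessed by $G$-invariant open sets $U_0,\dots,U_k$ covering $Y\times Y$ with $G$-sections $s_i\colon U_i\to\paths{Y}$. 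Put $V_i:=(f\times f)^{-1}(U_i)\subseteq X\times X$; these are $G$-invariant (since $f\times f$ is a $G$-map) open sets covering $X\times X$. The issue is that $g\circ s_i\circ(f\times f)$ lands in paths in $X$, but its endpoints are $(g f(x), g f(x'))$ rather than $(x,x')$. One repairs this using the $G$-homotopy $H\colon X\times[0,1]\to X$ from $\operatorname{Id}_X$ to $g\circ f$: concatenate, over $(x,x')\in V_i$, the path $t\mapsto H(x,1-t)$ from $gf(x)$ to $x$, then $g\circ s_i(f(x),f(x'))$ from $gf(x)$ to $gf(x')$, then $t\mapsto H(x',t)$ from $gf(x')$ to $x'$; reparametrised to $[0,1]$ this gives a continuous map $V_i\to\paths{X}$ covering the inclusion, and one checks it is a $G$-map because $H$, $g$, $s_i$ and $f$ are all equivariant and concatenation/reparametrisation are natural. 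Hence $\TC_G(X)\le k=\TC_G(Y)$; the reverse inequality follows by symmetry, swapping the roles of $f$ and $g$.

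The main obstacle is verifying that the concatenated, reparametrised section is genuinely a \emph{$G$-map} into $\paths{X}$ with the compact-open topology — that is, checking continuity of the assignment $(x,x')\mapsto(\text{concatenated path})$ as a map $V_i\to\paths{X}$, and that it strictly intertwines the $G$-actions. Continuity is the usual exponential-law bookkeeping: the three pieces are continuous families of paths (the outer two built from $H$, the middle from $g\circ s_i\circ(f\times f)$), their concatenation is continuous because the endpoints match continuously, and reparametrisation by a fixed homeomorphism of $[0,1]$ (or by the standard piecewise-linear gluing) is continuous in the compact-open topology. Equivariance is then immediate from $H(gx,t)=gH(x,t)$, $g(s_i(gy,gy'))=\dots$, and the fact that the $G$-action on $\paths{X}$ is pointwise. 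Everything else — that $V_i$ is open and $G$-invariant, that the $V_i$ cover — is formal. I would also remark that this argument is precisely the equivariant analogue of the proof that ordinary $\TC$ is a homotopy invariant, and that it shows more generally that $\secat_G$ of a $G$-fibration depends only on its $G$-fibrewise $G$-homotopy type.
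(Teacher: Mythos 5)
Your strategy is the standard one --- pull back a $G$-invariant open cover of $Y\times Y$ along $f\times f$, transport the sections via $g$, repair the endpoint mismatch using the $G$-homotopy $H$ from $\mathrm{Id}_X$ to $g\circ f$, check continuity via the exponential law, and invoke symmetry for the reverse inequality --- and it matches the spirit of the proof in Colman--Grant, which proceeds by showing that $\secat_G$ of a $G$-fibration is invariant under $G$-homotopy equivalence of base spaces. Your observations that $V_i:=(f\times f)^{-1}(U_i)$ is open, $G$-invariant, and that the $V_i$ cover $X\times X$ are correct, and the discussion of continuity and equivariance is at the appropriate level of care.

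However, the concatenation as you wrote it does not fit together. With $H(-,0)=\mathrm{Id}_X$ and $H(-,1)=g\circ f$, the path $t\mapsto H(x,1-t)$ runs from $gf(x)$ to $x$, while the second piece $g\circ s_i(f(x),f(x'))$ runs from $gf(x)$ to $gf(x')$, so the first and second pieces do not join; moreover $t\mapsto H(x',t)$ runs from $x'$ to $gf(x')$, not from $gf(x')$ to $x'$ as you state. To produce a section of $\eval{X}$ over $V_i$ --- i.e.\ a path in $X$ from $x$ to $x'$ --- you should concatenate $t\mapsto H(x,t)$ (from $x$ to $gf(x)$), then $g\circ s_i(f(x),f(x'))$ (from $gf(x)$ to $gf(x')$), then $t\mapsto H(x',1-t)$ (from $gf(x')$ to $x'$). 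In other words, the time-reversals on the first and third pieces were flipped. With that correction the argument is sound. I would also avoid overloading the symbol $g$ for both the $G$-homotopy inverse of $f$ and a generic element of $G$, since it obscures the equivariance check.
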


Thus the computation of equivariant topological complexity may be amenable to the tools of equivariant algebraic topology. For example, there is a lower bound for $\TC_G(X)$ coming from equivariant cohomology. 

\begin{thm}[{\cite[Theorem 5.15]{ColmanGrant}}]\label{thm:lowerTC_G}
Let $h^*_G(-)$ denote a multiplicative $G$-equivariant cohomology theory, and let $\Delta:X\to X\times X$ be the diagonal map. Then 
\[
\TC_G(X)\geq\nil\ker \big(\Delta^*:h^*_G(X\times X)\to h^*_G(X)\big).
\]
\end{thm}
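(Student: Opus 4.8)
The plan is to mimic the classical proof that $\TC(X) \geq \nil \ker\big(\Delta^*: h^*(X\times X) \to h^*(X)\big)$, replacing ordinary cohomology with the equivariant theory $h^*_G$ throughout, and checking that the relative Mayer--Vietoris / cup-length argument goes through $G$-equivariantly. The key observation is that the end-point fibration $\eval{X}: \paths{X} \to X\times X$ is $G$-homotopic (as a $G$-map) to the diagonal inclusion $\Delta: X \hookrightarrow X\times X$, via the $G$-equivariant deformation retraction of $\paths{X}$ onto the constant paths; hence a $G$-section of $\eval{X}$ over a $G$-invariant open set $U_i \subseteq X\times X$ forces $\iota_{U_i}: U_i \hookrightarrow X\times X$ to factor, up to $G$-homotopy, through $\Delta$. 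Consequently $\iota_{U_i}^* : h^*_G(X\times X) \to h^*_G(U_i)$ kills the ideal $J := \ker\big(\Delta^*: h^*_G(X\times X)\to h^*_G(X)\big)$, because $\iota_{U_i}^*$ equals (up to the isomorphism induced by the $G$-section and $G$-homotopy invariance of $h^*_G$) a map that factors through $\Delta^*$.

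Concretely, suppose $\TC_G(X) = k$, so $X\times X = U_0\cup\cdots\cup U_k$ with each $U_i$ $G$-invariant open and admitting a $G$-section $s_i$. Let $a_0,\ldots,a_k \in J$ be arbitrary classes in $\ker\Delta^*$; I want to show $a_0 a_1\cdots a_k = 0$ in $h^*_G(X\times X)$, which gives $\nil J \leq k$, i.e. $\TC_G(X) = k \geq \nil J$. For each $i$, since $\iota_{U_i}^*(a_i) = 0$, the long exact sequence of the pair $(X\times X, U_i)$ in $h^*_G$ produces a class $\bar a_i \in h^*_G(X\times X, U_i)$ restricting to $a_i$. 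Now invoke the relative cup product
\[
h^*_G(X\times X, U_0)\otimes\cdots\otimes h^*_G(X\times X, U_k) \longrightarrow h^*_G\big(X\times X, U_0\cup\cdots\cup U_k\big) = h^*_G(X\times X, X\times X) = 0,
\]
which holds for any multiplicative cohomology theory and in particular for a multiplicative $G$-equivariant one. The image of $\bar a_0\otimes\cdots\otimes \bar a_k$ under this pairing maps to $a_0\cdots a_k$ under restriction to $h^*_G(X\times X)$; since the target group is $0$, we conclude $a_0\cdots a_k = 0$.

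The main point requiring care, and the step I expect to be the real obstacle, is justifying the existence of the relative cup product $h^*_G(Z, A_0)\otimes\cdots\otimes h^*_G(Z, A_k) \to h^*_G(Z, A_0\cup\cdots\cup A_k)$ in an arbitrary multiplicative $G$-equivariant cohomology theory, together with its compatibility with restriction to absolute classes. For a genuine $G$-equivariant cohomology theory represented by a ring $G$-spectrum this is standard (it is the iterated relative cup product built from the excision isomorphisms and the ring structure), but one may need mild hypotheses (e.g. that the $A_i$ are open, or form an excisive cover of $Z$, which is automatic here since the $U_i$ are open) to invoke excision. The $G$-homotopy invariance and the existence of long exact sequences of pairs are part of the axioms for $h^*_G$, so the only genuinely non-formal input beyond these is the multiplicative structure, which is assumed in the statement. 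Once the relative cup product is in hand, the argument is identical in form to the non-equivariant case, with the crucial geometric input being that a $G$-section over $U_i$ exhibits $U_i$ as $G$-homotopy equivalent (over $X\times X$) to a subspace lying in the image of the $G$-map $\Delta$, so that $\iota_{U_i}^*$ annihilates $\ker\Delta^*$.
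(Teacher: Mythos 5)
Your argument is correct and follows the same route as the paper's (i.e.\ Theorem~5.15 of Colman--Grant): you factor $\iota_{U_i}$ through $\Delta$ up to $G$-homotopy via the $G$-contraction of $\paths{X}$ onto constant paths, lift each zero-divisor to a relative class using the long exact sequence of the pair $(X\times X, U_i)$, and conclude via the relative cup product landing in $h^*_G(X\times X, X\times X)=0$ -- which are exactly the three properties ($G$-homotopy invariance, long exact sequence of a $G$-pair, naturality of relative cup products) the text identifies as what the proof requires. One small wording note: $\iota_{U_i}^*$ does not equal a factored map \emph{up to an isomorphism}; rather $\iota_{U_i}\simeq_G \Delta\circ(\mathrm{ev}_0\circ s_i)$ gives the outright factorization $\iota_{U_i}^*=(\mathrm{ev}_0\circ s_i)^*\circ\Delta^*$ by $G$-homotopy invariance, which directly annihilates $\ker\Delta^*$.
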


The exact definition of ``multiplicative $G$-equivariant cohomology theory'' is deliberately left ambiguous, but examining the proof one sees that the only properties required of $h^*_G(-)$ are $G$-homotopy invariance, the long exact sequence of a pair of $G$-spaces and naturality of (relative) cup products. For example we may take as $h^*_G(-)$ the Borel cohomology $H^*_G(-;R):=H^*(EG\times_G -;R)$, that is the ordinary cellular or singular cohomology of the homotopy orbit space with coefficients in an arbitrary commutative ring $R$.

Also by analogy with the non-equivariant case, we have the following product formula, which was first stated as \cite[Theorem 4.2]{GGTX} under the assumption that $X$ and $Y$ are smooth $G$-manifolds.

\begin{thm}\label{thm:productTC_G}
Let $G$ be a compact Lie group, and let $X$ and $Y$ be paracompact $G$-spaces. Then
\[
\TC_G(X\times Y)\leq \TC_G(X) + \TC_G(Y).
\]
where $X\times Y$ is given the diagonal $G$-action.
\end{thm}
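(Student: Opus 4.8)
The plan is to mimic the classical proof that $\TC(X\times Y)\le\TC(X)+\TC(Y)$, working throughout with $G$-invariant open sets and $G$-equivariant local sections. Suppose $\TC_G(X)=k$ and $\TC_G(Y)=\ell$, witnessed by $G$-invariant open covers $\{U_0,\dots,U_k\}$ of $X\times X$ with $G$-maps $s_i\co U_i\to\paths X$ satisfying $\eval X\circ s_i=\iota_{U_i}$, and $\{V_0,\dots,V_\ell\}$ of $Y\times Y$ with $G$-maps $t_j\co V_j\to\paths Y$ satisfying $\eval Y\circ t_j=\iota_{V_j}$. Identify $(X\times Y)\times(X\times Y)$ with $(X\times X)\times(Y\times Y)$ via the obvious shuffle homeomorphism, which is $G$-equivariant for the diagonal actions. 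Then $U_i\times V_j$ is a $G$-invariant open subset, and the product maps $s_i\times t_j$ give $G$-equivariant sections of the endpoint fibration $\eval{X\times Y}$ over $U_i\times V_j$, using the natural identification $\paths{(X\times Y)}\cong\paths X\times\paths Y$, which is also a $G$-homeomorphism. The sets $\{W_n\}_{0\le n\le k+\ell}$ defined by $W_n:=\bigcup_{i+j=n}(U_i\times V_j)$ cover $(X\times Y)\times(X\times Y)$, are $G$-invariant and open, and one must produce a single $G$-equivariant section over each $W_n$.

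The crux, exactly as in the non-equivariant setting, is that $W_n$ is a union of several pieces $U_i\times V_j$ with $i+j=n$ on which we have been handed potentially incompatible sections, so we must glue. The standard device is a partition-of-unity argument: one reindexes so that on $W_n$ the overlapping pieces are separated using functions $f_i\co X\times X\to[0,1]$ subordinate to $\{U_i\}$ and $g_j\co Y\times Y\to[0,1]$ subordinate to $\{V_j\}$, then on the locus where $f_i\cdot g_j$ is strictly maximal among pairs with $i+j=n$ one uses the section $s_i\times t_j$. The refined open sets $\widetilde W_n^{(i,j)}:=\{(a,b): f_i(a)g_j(b) > f_{i'}(a)g_{j'}(b) \text{ for all } (i',j')\ne(i,j),\ i'+j'=n\}$ are disjoint, their union is $W_n$, and each carries the section $s_i\times t_j$; since disjoint open sets trivially admit a common section, we obtain a section over $W_n$. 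This is where paracompactness of $X$ and $Y$ enters — it guarantees the existence of the required (locally finite) partitions of unity subordinate to the given open covers.

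The one genuinely equivariant subtlety — and the step I expect to be the main obstacle — is ensuring that the gluing functions, hence the refined open sets and the resulting section, are $G$-invariant. A general partition of unity subordinate to a $G$-invariant cover need not consist of $G$-invariant functions; one fixes this by averaging. Here the hypothesis that $G$ is a \emph{compact} Lie group is essential: given any partition of unity $\{f_i\}$ subordinate to $\{U_i\}$, the averaged functions $\bar f_i(a):=\int_G f_i(g^{-1}a)\,dg$ (integration against normalized Haar measure) are continuous, $G$-invariant, still sum to $1$, and remain supported in the $G$-invariant set $U_i$; likewise for the $\bar g_j$ on $Y\times Y$. With $G$-invariant $\bar f_i$ and $\bar g_j$ in hand, the sets $\widetilde W_n^{(i,j)}$ above are $G$-invariant open sets, and the section $s_i\times t_j$ restricted to each is already a $G$-map by construction, so the glued section over $W_n$ is a genuine $G$-map. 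One then checks that $\eval{X\times Y}$ composed with this section is the inclusion $W_n\hookrightarrow(X\times Y)\times(X\times Y)$, which is immediate from the corresponding properties of the $s_i$ and $t_j$ and the compatibility of the identifications. This produces a $G$-invariant categorical-type cover of $(X\times Y)\times(X\times Y)$ by $k+\ell+1$ open sets with $G$-equivariant sections, giving $\TC_G(X\times Y)\le k+\ell=\TC_G(X)+\TC_G(Y)$.
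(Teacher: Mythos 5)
Your proposal matches the paper's approach exactly: run the classical Farber product argument with partitions of unity, and make it equivariant by averaging over normalized Haar measure on the compact group $G$, which is the only new ingredient and is precisely what the paper cites (\cite[Lemma 3.2]{Grant} for the averaging, \cite[Theorem 11]{Far03} for the classical argument).

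There is, however, one slip in your rendering of the classical part. You assert that the refined sets $\widetilde W_n^{(i,j)}$ are pairwise disjoint with union equal to $W_n$; disjointness within a fixed $n$ holds, but the union is \emph{not} $W_n$. A point $(a,b)$ where $f_i(a)g_j(b)=f_{i'}(a)g_{j'}(b)$ is maximal for two distinct pairs with $i+j=i'+j'=n$ lies in $W_n$ but in none of the $\widetilde W_n^{(i,j)}$. (You should also intersect $\widetilde W_n^{(i,j)}$ with $U_i\times V_j$ — equivalently, impose $f_i(a)g_j(b)>0$ — before $s_i\times t_j$ is defined on it.) The argument is rescued without changing the approach by observing that $\bigcup_n\bigcup_{i+j=n}\widetilde W_n^{(i,j)}$ nevertheless covers: if $i_0$ is the least index maximizing $f_i(a)$ and $j_0$ is the least index maximizing $g_j(b)$, then among pairs $(i',j')$ with $i'+j'=i_0+j_0$ the pair $(i_0,j_0)$ is the \emph{unique} maximizer of $f_{i'}(a)g_{j'}(b)$ (any other maximizer would need $i'\geq i_0$ and $j'\geq j_0$ with $i'+j'=i_0+j_0$, forcing $(i',j')=(i_0,j_0)$), so $(a,b)\in\widetilde W_{i_0+j_0}^{(i_0,j_0)}$. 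With this correction your proof is complete and coincides with the one the paper indicates.
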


\begin{proof} Examining the proof of the product inequality $\TC(X\times Y)\leq\TC(X)+\TC(Y)$ for ordinary topological complexity given in \cite[Theorem 11]{Far03}, one sees that all that is required to make it equivariant is the existence of $G$-equivariant partitions of unity subordinate to $G$-invariant open covers. When $G$ is compact Lie, this can be arranged by averaging over non-equivariant partitions of unity, as in \cite[Lemma 3.2]{Grant}.
\end{proof}

The following results relate the equivariant topological complexity of a $G$-space to the (equivariant) topological complexities of the fixed point sets. The original statements include the assumption that $H$ and $K$ be closed subgroups, which appears on closer inspection to be unnecessary.

\begin{prop}[{\cite[Proposition 5.3]{ColmanGrant}}]
Let $H$ and $K$ be subgroups of $G$ such that $X^H\subseteq X$ is $K$-invariant. Then $\TC_K(X^H)\leq \TC_G(X)$.
\end{prop}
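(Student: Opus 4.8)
\emph{Plan.} The strategy is to restrict an optimal $G$-equivariant motion planner on $X$ to the fixed-point set. Write $k:=\TC_G(X)$ and fix a cover of $X\times X$ by $G$-invariant open sets $U_0,\dots,U_k$ together with $G$-maps $s_i:U_i\to\paths{X}$ satisfying $\eval{X}\circ s_i=\iota_{U_i}$. For the diagonal $G$-action on $X\times X$ one has $(X\times X)^H=X^H\times X^H$, so setting $V_i:=U_i\cap(X^H\times X^H)$ produces open sets covering $X^H\times X^H$; since each $U_i$ is $G$-invariant, hence $K$-invariant, and $X^H$ is $K$-invariant by hypothesis, each $V_i=U_i^H$ is $K$-invariant.

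The heart of the argument is that $s_i$ restricts to a $K$-map $V_i\to\paths{X^H}$. On one hand, a $G$-map restricts on $H$-fixed point sets, so $s_i$ restricts to $s_i^H:U_i^H\to(\paths{X})^H$; on the other hand, a path $\gamma$ in $X$ satisfies $h\gamma=\gamma$ for all $h\in H$ if and only if $\gamma(t)\in X^H$ for every $t$, so $(\paths{X})^H=\paths{X^H}$. Concretely: if $(x,y)\in V_i$ then $h(x,y)=(x,y)$ for all $h\in H$, whence $h\cdot s_i(x,y)=s_i(h(x,y))=s_i(x,y)$, so the path $s_i(x,y)$ never leaves $X^H$. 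This is exactly where forcing the motion planner to be equivariant does the work. Since $K\le G$, the restriction $V_i\to\paths{X^H}$ is $K$-equivariant, and it is a section of $\eval{X^H}$ because $\eval{X^H}$ is the restriction of $\eval{X}$. Hence $\{V_i\}_{i=0}^k$ shows $\TC_K(X^H)\le k=\TC_G(X)$.

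I do not anticipate a real obstacle. The only points requiring care are routine point-set matters: that $V_i$ is open in the subspace $X^H\times X^H$, that $\paths{X^H}$ carries the subspace topology from $\paths{X}$ so that the corestriction of $s_i|_{V_i}$ is continuous, and that $\eval{X^H}=\eval{X}|_{\paths{X^H}}$. None of this uses anything about $H$ or $K$ beyond the hypothesis that $X^H$ is $K$-invariant, which is why the closedness assumptions in the original statement are unnecessary; and if $X^H=\emptyset$ the inequality is vacuous.
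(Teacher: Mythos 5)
Your proof is correct and takes essentially the same route as the original: restrict a $G$-equivariant motion planner to $X^H\times X^H=(X\times X)^H$, observe that $G$-equivariance of $s_i$ forces the output paths on $H$-fixed inputs to lie in $(\paths{X})^H=\paths{X^H}$, and note that $K$-equivariance and $K$-invariance of the restricted data come for free from $K\le G$ and the hypothesis on $X^H$. You also correctly identify that closedness of $H,K$ plays no role, which matches the remark in the text preceding the proposition.
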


\begin{cor}[{\cite[Corollary 5.4]{ColmanGrant}}]\label{cor:fixed}
For any subgroups $H$ and $K$ of $G$ one has $\TC(X^H)\leq \TC_G(X)$ and $\TC_K(X)\leq \TC_G(X)$.
\end{cor}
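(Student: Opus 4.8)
The plan is to deduce both inequalities from the preceding Proposition by specializing one of its two subgroups to be trivial; no new ideas are needed beyond unwinding two degenerate cases.

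For the first inequality $\TC(X^H)\leq\TC_G(X)$, I would apply the preceding Proposition with $K=\{e\}$ the trivial subgroup. The hypothesis that $X^H\subseteq X$ be $K$-invariant is then vacuous, since every subset is $\{e\}$-invariant. The Proposition yields $\TC_{\{e\}}(X^H)\leq\TC_G(X)$, and since the trivial group acts trivially on $X^H$, observation (1) following Definition~\ref{def:TC_G} identifies $\TC_{\{e\}}(X^H)$ with $\TC(X^H)$, giving the claim.

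For the second inequality $\TC_K(X)\leq\TC_G(X)$, I would instead apply the Proposition with $H=\{e\}$. Then $X^H=X^{\{e\}}=X$, since every point of $X$ is fixed by the identity element, and $X$ is certainly $K$-invariant because the $G$-action restricts to a $K$-action for any subgroup $K\leq G$. The Proposition then gives $\TC_K(X)=\TC_K(X^{\{e\}})\leq\TC_G(X)$.

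I do not expect any genuine obstacle here: the mathematical content lives entirely in the preceding Proposition, and the only points requiring a moment's care are the harmless identification $\TC_{\{e\}}(Y)=\TC(Y)$ and the equality $X^{\{e\}}=X$. I note in passing that the second inequality also admits a direct argument not routed through the Proposition: any covering of $X\times X$ by $G$-invariant open sets $U_i$ with $G$-equivariant sections $s_i$ of $\eval{X}$ is, a fortiori, a covering by $K$-invariant open sets with $K$-equivariant sections, because $K\subseteq G$ and every $G$-map is in particular a $K$-map; hence $\TC_K(X)\leq\TC_G(X)$.
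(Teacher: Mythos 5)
Your proof is correct and is the standard deduction: both inequalities follow by specializing the preceding Proposition to a trivial subgroup, together with the identifications $X^{\{e\}}=X$ and $\TC_{\{e\}}(Y)=\TC(Y)$. This is precisely the argument behind the cited Corollary 5.4 of Colman--Grant, so there is nothing to add; your direct argument for the second inequality (restricting a $G$-invariant cover with $G$-equivariant sections to a $K$-invariant cover with $K$-equivariant sections) is also a perfectly valid shortcut.
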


Recall that $\TC(Y)$ fails to be finite if $Y$ is not path-connected. In particular, if for some subgroup $H$ of $G$ the the fixed point set $X^H$ is not path-connected, then by Corollary \ref{cor:fixed},
\[
\infty=\TC(X^H)\leq \TC_G(X).
\]
Thus the equivariant topological complexity may be infinite, and the topological complexity finite. 

\begin{ex}\label{ex:S1reflection}
Let $G=C_2$ be cyclic of order two, acting on $S^1\subseteq \C$ by complex conjugation. Then $\TC_G(S^1)=\infty$, while $\TC(S^1)=1$.
\end{ex}

\begin{defn}
A $G$-space $X$ is called \emph{$G$-connected} if the fixed point set $X^H$ is path-connected for every subgroup $H\leq G$. 
\end{defn}

Thus for $\TC_G(X)$ to be finite, it is necessary that $X$ be $G$-connected. The notion of $G$-connectedness is also involved in inequalities relating the equivariant topological complexity to the equivariant Lusternik--Schnirelmann category. We recall the definition of the latter.

\begin{defn}[{\cite{Fadell,Marzantowicz,ClappPuppe91}}]
Let $X$ be a $G$-space. The \emph{equivariant (Lusternik--Schnirelmann) category} of $X$, denoted $\cat_G(X)$, is defined to be the minimal integer $k$ such that $X$ can be covered by $G$-invariant open sets $U_0,\ldots , U_k$ such that each inclusion $\iota_{U_i}:U_i\hookrightarrow X$ is $G$-homotopic to a map with values in a single orbit.
\end{defn}

\begin{prop}[{\cite[Propositions 5.6, 5.7]{ColmanGrant}}]\label{prop:catGbounds}
Let $X$ be a $G$-space.
\begin{enumerate}[(a)]
\item If $X$ is $G$-connected, then $\TC_G(X)\leq \cat_G(X\times X)$.
\item If $H\subseteq G$ is the isotropy group of some $x\in X$, then $\cat_H(X)\leq \TC_G(X)$.
\end{enumerate}
\end{prop}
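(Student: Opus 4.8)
For part (a), assume $X$ is $G$-connected and let $\cat_G(X\times X)=k$, witnessed by a cover of $X\times X$ by $G$-invariant open sets $U_0,\dots,U_k$ together with, for each $i$, a $G$-homotopy $H_i\co U_i\times[0,1]\to X\times X$ from the inclusion $\iota_{U_i}$ to a $G$-map $c_i\co U_i\to X\times X$ whose image lies in a single orbit $G(a_i,b_i)$. I would show that each $U_i$ admits a $G$-map $s_i\co U_i\to\paths{X}$ with $\eval{X}\circ s_i=\iota_{U_i}$, which gives the inequality. The crux is a lifting statement for the single orbit: writing $K_i:=G_{a_i}\cap G_{b_i}$ for the isotropy group of $(a_i,b_i)$ under the diagonal action, a $G$-map $\sigma_i\co G(a_i,b_i)\cong G/K_i\to\paths{X}$ with $\eval{X}\circ\sigma_i$ the inclusion corresponds, under the adjunction identifying $G$-maps out of $G/K_i$ with $K_i$-fixed points, to a path in $X^{K_i}$ from $a_i$ to $b_i$. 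Since $a_i,b_i\in X^{K_i}$ and $X^{K_i}$ is path-connected by $G$-connectedness, such a path exists, so $\sigma_i$ exists.

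Next I would splice. For $(x,y)\in U_i$, composing $H_i$ with the two coordinate projections of $X\times X$ gives a path from $x$ to the first coordinate of $c_i(x,y)$ and a path from $y$ to the second coordinate of $c_i(x,y)$; concatenating the first, then $\sigma_i(c_i(x,y))$, then the reverse of the second, produces a path from $x$ to $y$. The assignment $(x,y)\mapsto s_i(x,y)$ so defined is continuous, as concatenation and reversal are continuous operations on path spaces, and $G$-equivariant, since $H_i$, $c_i$, $\sigma_i$ are all equivariant and concatenation and reversal commute with the pointwise $G$-action on $\paths{X}$; moreover $\eval{X}\circ s_i=\iota_{U_i}$ by construction. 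Hence $\TC_G(X)\le k$.

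For part (b), let $H=G_x$. The key observation is that $j\co X\to X\times X$, $j(y)=(x,y)$, is $H$-equivariant: for $h\in H$ one has $h\cdot j(y)=(hx,hy)=(x,hy)=j(hy)$, using that $h$ fixes $x$. Given a cover of $X\times X$ by $G$-invariant open sets $U_0,\dots,U_k$ and $G$-maps $s_i\co U_i\to\paths{X}$ with $\eval{X}\circ s_i=\iota_{U_i}$ realising $k=\TC_G(X)$, set $V_i:=j^{-1}(U_i)$. Each $V_i$ is $H$-invariant and open, and they cover $X$. On $V_i$ the formula $(y,t)\mapsto s_i(x,y)(t)$ defines a continuous map $V_i\times[0,1]\to X$ that is an $H$-homotopy from the constant map at $x$ to the inclusion $V_i\hookrightarrow X$, equivariance being immediate from $s_i$ being a $G$-map and $j|_{V_i}$ an $H$-map. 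Since $\{x\}=Hx$ is a single $H$-orbit, each $V_i$ is $H$-categorical, whence $\cat_H(X)\le k=\TC_G(X)$.

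I expect the main obstacle to be the single-orbit lifting step in part (a): one must recognise that a $G$-equivariant lift of the inclusion $G(a,b)\hookrightarrow X\times X$ through the end-point fibration amounts, via the orbit--fixed-point correspondence, to joining $a$ and $b$ by a path inside the fixed set $X^{G_a\cap G_b}$, which is exactly where the $G$-connectedness hypothesis enters (and, in view of Example \ref{ex:S1reflection}, it cannot in general be dropped). The remaining ingredients --- continuity and equivariance of concatenation, reversal and evaluation, and the restriction-of-structure-group bookkeeping in part (b) --- are routine.
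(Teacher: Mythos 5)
Your proof is correct and follows essentially the same strategy as Colman--Grant. Part (a) uses the key observation that a $G$-equivariant section of $\eval{X}$ over an orbit $G(a,b)\cong G/(G_a\cap G_b)$ corresponds, via the orbit--fixed-point adjunction, to a path from $a$ to $b$ in $X^{G_a\cap G_b}$, which is where $G$-connectedness enters; splicing with the coordinate projections of the categorical homotopy then produces the required $G$-section over each $U_i$. Part (b) pulls back a $\TC_G$-cover along the $H$-equivariant slice $y\mapsto (x,y)$ and reads off an $H$-categorical cover. The only point worth flagging explicitly is that the identification $G/K_i\cong G(a_i,b_i)$ as $G$-spaces (and hence the continuity of $\sigma_i$) relies on the orbit map being a quotient map onto its image, which holds under the standing hypotheses of the original paper (e.g.\ $G$ compact Lie acting on a Hausdorff space); this should be stated rather than left implicit.
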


\begin{cor}[{\cite[Corollary 5.8]{ColmanGrant}}]\label{cor:catGbounds}
Let $X$ be a completely normal $G$-connected $G$-space with a fixed point $x\in X^G$. Then
\[
\cat_G(X)\leq \TC_G(X)\leq 2\cat_G(X).
\]
\end{cor}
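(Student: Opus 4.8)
The plan is to deduce Corollary \ref{cor:catGbounds} directly from Proposition \ref{prop:catGbounds} by supplying the two missing ingredients: an equivariant product inequality $\cat_G(X\times X)\leq 2\cat_G(X)$ for the upper bound, and the existence of a fixed point to make the lower bound of Proposition \ref{prop:catGbounds}(b) say something about $\cat_G(X)$ itself.

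For the upper bound, I would argue as follows. By Proposition \ref{prop:catGbounds}(a), since $X$ is $G$-connected, $\TC_G(X)\leq\cat_G(X\times X)$. It therefore suffices to show $\cat_G(X\times X)\leq\cat_G(X)+\cat_G(X)=2\cat_G(X)$, where $X\times X$ carries the diagonal $G$-action. This is the equivariant analogue of the classical inequality $\cat(X\times Y)\leq\cat(X)+\cat(Y)$, and I would obtain it exactly as in the proof of \thmref{thm:productTC_G} above: take $G$-invariant categorical open covers $\{U_i\}_{i=0}^{m}$ and $\{V_j\}_{j=0}^{m}$ of the two factors (where $m=\cat_G(X)$), form the $G$-invariant open sets $W_k=\bigcup_{i+j=k}U_i\times V_j$ for $k=0,\dots,2m$ of $X\times X$, and use $G$-equivariant partitions of unity subordinate to the $G$-invariant covers to shrink overlaps as in the classical argument; here the hypothesis that $X$ is completely normal (so that $X\times X$ is at least normal enough to admit such partitions of unity, or one appeals to paracompactness as in the compact-Lie setting) is what makes this go through. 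Each $U_i\times V_j$ is $G$-homotopic into a product orbit $Gx\times Gy$, hence into a single orbit of $X\times X$, so each $W_k$ is $G$-categorical, giving $\cat_G(X\times X)\leq 2m$.

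For the lower bound, the point of assuming a fixed point $x\in X^G$ is that then the isotropy group of $x$ is $G_x=G$ itself. Applying Proposition \ref{prop:catGbounds}(b) with this $x$ gives $\cat_{G_x}(X)=\cat_G(X)\leq\TC_G(X)$, which is precisely the left-hand inequality. Stringing the two together yields $\cat_G(X)\leq\TC_G(X)\leq\cat_G(X\times X)\leq 2\cat_G(X)$, as claimed.

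The main obstacle is the equivariant product inequality for $\cat_G$, and specifically making sure the classical shrinking-of-overlaps argument survives equivariantly: the existence of $G$-equivariant partitions of unity subordinate to $G$-invariant open covers is the technical crux, and it is exactly the role played by the completely normal hypothesis (mirroring the use of paracompactness and averaging in \thmref{thm:productTC_G}). Once that is granted, the combinatorics of reindexing the cover by $i+j=k$ and the verification that each $W_k$ deformation-retracts $G$-equivariantly into an orbit are routine, and the only other subtlety — that a product of orbits $Gx\times Gy$ is $G$-homotopic in $X\times X$ to a subset of a single orbit — is immediate since $Gx\times Gy$ contains the orbit $G(x,y)$ and, in a $G$-connected space, one can slide within it; alternatively one simply notes that being $G$-homotopic into a product of orbits already certifies $G$-categoriality in the sense needed for the bound $\TC_G(X)\leq 2\cat_G(X)$ via Proposition \ref{prop:catGbounds}(a).
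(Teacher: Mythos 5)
Your overall structure coincides with the paper's: deduce the lower bound from \propref{prop:catGbounds}(b) using $G_x=G$ at the fixed point, and the upper bound from \propref{prop:catGbounds}(a) together with a product inequality $\cat_G(X\times X)\leq 2\cat_G(X)$ for the diagonal action. That much is right, and the paper's own remark after the corollary confirms this is the intended route.

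However, there is a genuine gap in your treatment of the product inequality, and it is exactly the subtlety the paper's remark warns about. You identify the technical crux as the existence of $G$-equivariant partitions of unity, but that is the comparatively easy part (complete normality is there for the classical shrinking-of-covers step). The real difficulty, which you dismiss as ``immediate'' and ``routine,'' is that $Ga_i\times Gb_j$ is \emph{not} a single orbit of the diagonal $G$-action on $X\times X$ (it is a union of orbits), so $U_i\times V_j$ being $G$-homotopic into $Ga_i\times Gb_j$ does not certify that it is $G$-categorical in $X\times X$. Your final sentence --- that ``being $G$-homotopic into a product of orbits already certifies $G$-categoriality'' --- is simply false under the definition of $\cat_G$, which requires a single orbit. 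And the preceding ``one can slide within it'' is not an argument; historically, the absence of exactly such an argument is why the product formulae in Cicorta\c{s} and in the original Colman--Grant paper \cite[Theorem 3.15]{ColmanGrant} were wrong, as the paper notes (citing counter-examples in Bayeh--Sarkar \cite{BayehSarkar15}). What is actually needed, and what the hypotheses of the corollary are designed to supply, is a lemma to the effect that when $X$ is $G$-connected and has a fixed point $x_0\in X^G$, every $G$-categorical open set can be $G$-homotoped to the \emph{constant} map at $x_0$; then each $U_i\times V_j$ is $G$-homotopic to the constant at $(x_0,x_0)$, a genuine single orbit, and the reindexing argument goes through. Without isolating and proving that reduction-to-the-fixed-point step, the proof has a hole precisely where the literature had one, and where the paper points to Bayeh--Sarkar's corrected Theorem 2.23 for the valid statement.
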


\begin{rem}
To deduce Corollary \ref{cor:catGbounds} from Proposition \ref{prop:catGbounds} one needs a product inequality for the equivariant category for a diagonal action. We note here that the statements of \cite[Proposition 3.2]{Cicortas} and \cite[Theorem 3.15]{ColmanGrant} are missing assumptions on the fixed point sets. The correct statement appears as Theorem 2.23 in the paper \cite{BayehSarkar15} by Bayeh and Sarkar, which also includes counter-examples to \cite[Theorems 3.15, 3.16]{ColmanGrant}.
\end{rem}
 
One of the motivations for introducing the equivariant topological complexity was to give upper bounds for the topological complexity of total spaces of fiber bundles. Recall that given a $G$-space $X$ and a principal $G$-bundle $E\to B$, one gets an associated bundle $p:E\times_G X\to B$ whose fiber is $X$ and whose total space is the orbit space of the diagonal $G$-action on $E\times X$. 

\begin{thm}[{\cite[Theorem 5.16]{ColmanGrant}}]\label{thm:TCFib}
Let $X$ be a $G$-space, and let $E\to B$ be a numerable principal $G$-bundle. Then
\[
\TC(E\times_G X) <(\TC(B)+1)(\TC_G(X)+1).
\]
\end{thm}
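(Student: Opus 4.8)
The plan is to reduce the topological complexity of $E\times_G X$ to a product-like estimate by pulling back motion-planning data along the bundle projection $p\colon E\times_G X\to B$, exactly as in the non-equivariant fibration arguments of Farber. First I would record the following two ingredients. On the base side, $\TC(B)+1$ open sets $V_0,\dots,V_m\subseteq B\times B$ cover $B\times B$, each carrying a continuous section $\sigma_j$ of $\pi_B\colon \paths{B}\to B\times B$. On the fibre side, the $G$-space $X$ admits $\TC_G(X)+1$ many $G$-invariant open sets $U_0,\dots,U_n\subseteq X\times X$ covering $X\times X$, each with a $G$-map $s_i\colon U_i\to \paths{X}$ splitting $\eval{X}$. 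The key observation is that the diagonal $G$-action on $E\times X$ makes $\paths{E\times X}\to (E\times X)\times(E\times X)$ a $G$-fibration, and passing to orbit spaces one gets a fibration whose total space maps to $\paths{E\times_G X}$; more usefully, $(E\times_G X)\times(E\times_G X)$ fibres over $B\times B$ via $p\times p$, with fibre over $(b,b')$ essentially a copy of $X\times X$ (after trivialising $E$ over a neighbourhood).

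The main construction: over each $V_j\subseteq B\times B$ I would use the section $\sigma_j$ together with local trivialisations of the principal bundle $E\to B$ to identify $(p\times p)^{-1}(V_j)$ with an open subset of $V_j\times (X\times X)$ — here numerability of $E\to B$ is what lets us choose such trivialisations over a numerable cover and then patch using a subordinate partition of unity (this is where the hypothesis is used). Pulling back the cover $\{U_i\}$ of $X\times X$ along the second projection and intersecting with $(p\times p)^{-1}(V_j)$ produces open sets $W_{ij}\subseteq (E\times_G X)\times(E\times_G X)$, indexed by pairs $(i,j)$, which together cover the whole space. On $W_{ij}$ a path in $E\times_G X$ between the two given points can be built in two stages: first use $\sigma_j$ to move in the base direction — lifting this path of $B$ to a path of $E$ (possible by the path-lifting property of the bundle, made continuous via the trivialisations), and carrying the $X$-coordinate along by the induced local trivialisation — and then, now sitting in a single fibre $X$ over a fixed base point, use the $G$-map $s_i$ to complete the motion. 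The $G$-equivariance of $s_i$ is precisely what makes the second-stage construction well-defined after passing to the orbit space $E\times_G X$; without it the formula would not descend. Concatenating the two stages gives a continuous section of $\eval{E\times_G X}$ over each $W_{ij}$.

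The count is then $(m+1)(n+1) = (\TC(B)+1)(\TC_G(X)+1)$ open sets, so $\TC(E\times_G X)+1 \le (\TC(B)+1)(\TC_G(X)+1)$, which is the strict inequality claimed. I expect the main obstacle to be the careful bookkeeping needed to make the two-stage path depend continuously on the point of $W_{ij}$ while respecting the orbit-space identifications: one must choose the local trivialisations of $E\to B$, the induced trivialisations of the associated bundle, and the lifts of the base paths coherently, and then verify that concatenation with the fibrewise section $s_i$ is continuous and genuinely lands in $\paths{E\times_G X}$ (not just fibrewise). Numerability of the bundle and paracompactness-type behaviour of the relevant spaces are what make this patching go through; the rest is a faithful equivariant transcription of Farber's fibration inequality.
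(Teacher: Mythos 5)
Your skeleton matches the argument in \cite{ColmanGrant}: take covers of $B\times B$ and $X\times X$ with $m+1=\TC(B)+1$ and $n+1=\TC_G(X)+1$ pieces respectively, build $(m+1)(n+1)$ open sets in $(E\times_G X)\times(E\times_G X)$ indexed by pairs, and on each run a two-stage motion planner (move along a lift of $\sigma_j$, then move inside a single fibre using $s_i$); the count then gives the strict inequality. Two points in your write-up, however, are gaps rather than mere bookkeeping.

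First, there is no identification of $(p\times p)^{-1}(V_j)$ with an open subset of $V_j\times(X\times X)$: that would amount to trivialising, over an arbitrary open $V_j\subseteq B\times B$, a bundle with structure group $G\times G$, which generally fails. What is actually available, and what you need, is the transport map $\Phi_j\colon(p\times p)^{-1}(V_j)\to(E\times_G X)\times_B(E\times_G X)$ that uses $\sigma_j$ and a continuous lifting function to push the second coordinate into the fibre over the first base point. The target is the associated bundle $E\times_G(X\times X)$ with the \emph{diagonal} $G$-action, and it is exactly because each $U_i$ is $\Delta(G)$-invariant (not $G\times G$-invariant) that $E\times_G U_i$ is a well-defined open set covering this space; set $W_{ij}=\Phi_j^{-1}(E\times_G U_i)$. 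The $G$-equivariance of $s_i$ is then what makes the fibrewise section descend to $E\times_G U_i\to \paths{E\times_G X}$, as you anticipate. Your definition of $W_{ij}$ by ``pulling back $U_i$ along the second projection'' has no global meaning without routing through $\Phi_j$.

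Second, numerability is not used to patch trivialisations into a chart over $V_j$. Its role, via Dold's theorem, is to guarantee that the associated bundle projection $\hat{p}\colon E\times_G X\to B$ is a Hurewicz fibration, hence admits a continuous lifting function $L\colon \paths{B}\times_B(E\times_G X)\to \paths{E\times_G X}$. That continuity is precisely what makes the first-stage base move, and hence $\Phi_j$, depend continuously on $(y,y')$; without it your two-stage path is only defined pointwise. Your opening ``key observation'' about a $G$-fibration on $\paths{E\times X}$ and passing to orbit spaces does not feed into the argument (the diagonal $G$-quotient of $(E\times X)^2$ is not $(E\times_G X)^2$) and should be dropped.
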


This theorem was applied in the paper \cite{GGTX} to obtain upper bounds on the topological complexity of projective product spaces.

There is also a direct inequality relating the equivariant and parametrized topological complexities. The parametrized topological complexity $\TC[p:Y\to B]$ of a fibre bundle $p:Y\to B$ was defined elsewhere in this volume.

\begin{thm}[{\cite[Theorem 3.4]{FarberOprea}}]\label{thm:equivparam}
For the associated bundle $p:E\times_G X\to B$ as above, one has
\[ 
\TC[p:E\times_G X\to B] \leq \TC_G(X).
\]
\end{thm}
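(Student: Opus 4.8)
The plan is to compare the two covers directly, producing a parametrized motion planner over each piece of an equivariant cover. Recall that $\TC[p\colon E\times_G X\to B]$ is the sectional category of the fibrewise diagonal map, i.e.\ the map from the fibrewise path space $P_B(E\times_G X)$ of paths lying in a single fibre of $p$ to the fibrewise product $(E\times_G X)\times_B(E\times_G X)$. So I would first identify these fibrewise constructions explicitly in terms of the Borel-type quotients: the fibrewise product $(E\times_G X)\times_B (E\times_G X)$ is naturally homeomorphic to $E\times_G(X\times X)$ (with $G$ acting diagonally on $X\times X$), since a pair of points in the same fibre over $[e]\in B$ is the same as a $G$-orbit of triples $(e,x,y)$. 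Similarly, a path in a single fibre of $p$ is represented by a $G$-orbit of pairs $(e,\gamma)$ with $e\in E$ and $\gamma\in P X$, so the fibrewise path space is $E\times_G PX$, and the fibrewise diagonal map becomes $\mathrm{id}_E\times_G \pi_X\colon E\times_G PX\to E\times_G(X\times X)$.

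Next I would take an optimal $G$-invariant cover $U_0,\dots,U_k$ of $X\times X$ with $k=\TC_G(X)$, together with $G$-equivariant partial sections $s_i\colon U_i\to PX$ of $\pi_X$. Since each $U_i$ is $G$-invariant, the subsets $V_i:=E\times_G U_i\subseteq E\times_G(X\times X)$ are well-defined open sets, and because $\{U_i\}$ covers $X\times X$ the $\{V_i\}$ cover $E\times_G(X\times X)$. Over each $V_i$, the map $\mathrm{id}_E\times_G s_i\colon E\times_G U_i\to E\times_G PX$ is well-defined precisely because $s_i$ is $G$-equivariant (so it descends to the quotients), and it is a continuous partial section of $\mathrm{id}_E\times_G \pi_X$ because $\pi_X\circ s_i=\iota_{U_i}$. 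Each $V_i$ lies over a single fibre direction in the sense required: the composite $V_i\to E\times_G PX\to E\times_G(X\times X)\to B$ agrees with the projection, so these are genuine fibrewise partial motion planners. Hence $\{V_i\}_{i=0}^k$ is a sectional-category cover for the fibrewise diagonal, giving $\TC[p]\le k=\TC_G(X)$.

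The main point requiring care — and the step I expect to be the real obstacle — is verifying that the passage to quotients is well-behaved topologically: that $E\times_G U_i$ is genuinely open in $E\times_G(X\times X)$, that the fibrewise path space is correctly identified with $E\times_G PX$ as a space (the compact-open topology interacting with the quotient by $G$), and that $\mathrm{id}_E\times_G s_i$ is continuous. For $E\to B$ a numerable (or at least locally trivial) principal bundle these are standard, since locally over $B$ everything is a product $U\times G$ and the constructions restrict to $U\times$(the non-equivariant data); one patches using the bundle charts. I would also remark that no hypothesis on $G$ beyond what makes the quotient constructions behave (e.g.\ $G$ a topological group acting with $E\to B$ a principal bundle) is needed, and that the strictness of the identifications above is exactly what prevents any ``$+1$'' loss, so the inequality is the sharp $\TC[p]\le\TC_G(X)$ rather than a product-type bound.
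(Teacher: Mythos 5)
The paper does not give its own proof of this statement but refers to \cite[Theorem~3.4]{FarberOprea}; your argument is correct and is, in essence, the argument used there. The key step is precisely the pair of identifications you name: the fibrewise product $(E\times_G X)\times_B (E\times_G X)$ with $E\times_G(X\times X)$ (diagonal action) and the fibrewise path space $(E\times_G X)^I_B$ with $E\times_G \paths{X}$, under which the parametrized end-point map becomes $\mathrm{id}_E\times_G \eval{X}$. Once these are in place, $G$-invariant open sets $U_i\subseteq X\times X$ with $G$-equivariant partial sections $s_i$ pass directly to open sets $V_i=E\times_G U_i$ (open because $E\times U_i$ is a saturated open set and the orbit map is open) with continuous partial sections $\mathrm{id}_E\times_G s_i$, and no ``$+1$'' loss occurs because the covers correspond bijectively rather than being combined. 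The point-set verifications you flag (that the first identification is a homeomorphism, which uses the continuous translation function of the principal bundle; that the second is a homeomorphism, which uses local triviality of $p$; and continuity of the descended sections, which holds since the orbit maps are quotient maps) are exactly the checks required, and they go through for a numerable principal $G$-bundle with $G$ a topological group, as you indicate.
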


We mention in passing that the proof of Theorem \ref{thm:TCFib} given in \cite{ColmanGrant} can be modified (perhaps simplified) to prove the stronger inequality
\[
\TC(E\times_G X) <(\TC(B)+1)(\TC[p:E\times_G X\to B]+1).
\]
We leave the details to the interested reader.

\begin{rem}
Bayeh and Sarkar \cite{BayehSarkar20} have defined the \emph{sequential equivariant topological complexity} $\TC_{r,G}(X)$ for a $G$-space $X$ and natural number $r\ge2$. Many (if not all) of the results for $r=2$ in this section for admit natural generalizations for higher $r$. In particular, in \cite{FarberOprea} the authors treat sequential parametrized topological complexity and Theorem \ref{thm:equivparam} is stated in the sequential setting.
\end{rem}

\section{Invariant topological complexity}\label{S:TC^G}

Although the definition of $\TC_G(X)$ is fairly natural from a mathematical perspective, it has a number of drawbacks. Firstly, requiring our motion planners to be equivariant only makes the motion planning task more complex, as evidenced by the inequality $\TC(X)\leq \TC_G(X)$ which can certainly be strict (see Example \ref{ex:S1reflection}). One might hope instead to exploit the symmetries to ease the motion planning task. Secondly, from a mathematical perspective it would be desirable if for a free $G$-action the equivariant topological complexity were equal to the topological complexity of the orbit space. However this is not the case, as the following example shows.

\begin{ex}\label{ex:S1rotation}
Let $G=X=S^1$ be the circle group, acting on itself by rotation. The orbit space $X/G$ is a point, hence $\TC(X/G)=0$. However $1=\TC(X)\leq\TC_G(X)$. Therefore $\TC_G(X)\neq \TC(X/G)$ in this example, although the action is free.
\end{ex}

One reason underlying this phenomenon is that the definition of $\TC_G(X)$ involves the diagonal action of $G$ on $X\times X$, while the definition of $\TC(X/G)$ involves $X/G\times X/G$, which is the orbit space of the action of $G\times G$ on $X\times X$.  
Motivated by this, Lubawski and Marzantowicz introduced in \cite{LubMar} a different variant of topological complexity with symmetry, called the \emph{invariant topological complexity}, which takes into account the action of $G\times G$. Note that the path space $\paths{X}$ does not carry a natural action of $G\times G$, so Lubawaski and Marzantowicz introduce the space
\[
\paths{X}\times_{X/G}\paths{X} = \{(\gamma,\delta)\in\paths{X}\times\paths{X} \mid G\gamma(1)=G\delta(0)\}.
\]
The notation reflects the fact that this space is the topological pullback of the maps $q\circ\mathrm{ev}_0,q\circ\mathrm{ev}_1:\paths{X}\to X/G$ obtained by composing the two end-point evaluation maps $\mathrm{ev}_0,\mathrm{ev}_1:\paths{X}\to X$ with the projection $q:X\to X/G$ onto the orbit space. One can think of $\paths{X}\times_{X/G}\paths{X}$ as a space of broken paths in $X$, which are continuous except for at one point where they are allowed to ``jump'' to a point in the same $G$-orbit. Now the product $G\times G$ acts on $\paths{X}\times_{X/G}\paths{X}$ via $(g,h)(\gamma,\delta)=(g\gamma,h\delta)$, and the projection map 
\[
\evalG{X}:\paths{X}\times_{X/G}\paths{X}\to X\times X,\qquad (\gamma,\delta)\mapsto (\gamma(0),\delta(1))
\]
is $(G\times G)$-equivariant.

\begin{defn}[{\cite{LubMar}}]\label{def:TC^G}
Let $X$ be a $G$-space. The \emph{invariant topological complexity} of $X$, denoted $\TC^G(X)$, is defined to be the minimal integer $k$ such that $X\times X$ may be covered by $(G\times G)$-invariant open sets $U_0,\ldots , U_k$, each admitting a $(G\times G)$-map $s_i:U_i\to \paths{X}\times_{X/G}\paths{X}$ such that $\evalG{X}\circ s_i = \iota_{U_i}:U_i\hookrightarrow X\times X$. Briefly, $\TC^G(X)=\secat_{G\times G}(\evalG{X})$.
\end{defn}

\begin{prop}[{\cite[Proposition 3.25]{LubMar}}]
The invariant topological complexity is $G$-homotopy invariant. That is, if $X\simeq_G Y$, then $\TC^G(X)=\TC^G(Y)$.
\end{prop}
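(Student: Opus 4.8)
The plan is to adapt the familiar proof of homotopy invariance of ordinary topological complexity, carrying all the group actions along. It suffices to prove $\TC^G(X)\le\TC^G(Y)$ whenever $X\simeq_G Y$, since the hypothesis is symmetric in $X$ and $Y$. So fix $G$-maps $\phi\colon X\to Y$ and $\psi\colon Y\to X$ and a $G$-homotopy $F\colon X\times[0,1]\to X$ with $F(x,0)=\psi\phi(x)$ and $F(x,1)=x$.

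The first step is to record the functoriality of the broken path construction. A $G$-map $\phi\colon X\to Y$ induces a map $\Phi\colon\paths{X}\times_{X/G}\paths{X}\to\paths{Y}\times_{Y/G}\paths{Y}$, $(\gamma,\delta)\mapsto(\phi\circ\gamma,\phi\circ\delta)$; this is well defined because $\phi$ being a $G$-map sends a pair with $G\gamma(1)=G\delta(0)$ to a pair satisfying the analogous condition in $Y$, it is $(G\times G)$-equivariant, and it satisfies $\evalG{Y}\circ\Phi=(\phi\times\phi)\circ\evalG{X}$. The same remark applies to $\psi$.

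Now suppose $\TC^G(Y)=k$, witnessed by a $(G\times G)$-invariant open cover $U_0,\dots,U_k$ of $Y\times Y$ with $(G\times G)$-equivariant sections $s_i\colon U_i\to\paths{Y}\times_{Y/G}\paths{Y}$ of $\evalG{Y}$. Put $V_i:=(\phi\times\phi)^{-1}(U_i)$; since $\phi\times\phi$ is continuous and $(G\times G)$-equivariant, the sets $V_i$ form a $(G\times G)$-invariant open cover of $X\times X$. Over $V_i$ I would define $\sigma_i$ by: for $(x_0,x_1)\in V_i$, write $s_i(\phi(x_0),\phi(x_1))=(\gamma,\delta)$, push it through $\psi$ to the broken path $(\psi\circ\gamma,\psi\circ\delta)$ from $\psi\phi(x_0)$ to $\psi\phi(x_1)$ (legitimate because $\psi$ is a $G$-map), and correct the endpoints using $F$, setting
\[
\sigma_i(x_0,x_1):=\Big(\big(t\mapsto F(x_0,1-t)\big)\ast(\psi\circ\gamma),\ (\psi\circ\delta)\ast\big(t\mapsto F(x_1,t)\big)\Big).
\]
Here each component is an honest path (two genuine paths concatenated), the only jump is the one inherited from $s_i$, and $\evalG{X}\circ\sigma_i=\iota_{V_i}$ by inspection of endpoints.

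Finally one checks that each $\sigma_i$ is continuous and $(G\times G)$-equivariant. Continuity is routine, following from continuity of $s_i$, of $F$, of $\gamma\mapsto\psi\circ\gamma$, and of path concatenation in the compact-open topology. Equivariance is the step I expect to demand the most care: using that $s_i$ is $(G\times G)$-equivariant, that $\psi$ intertwines the $G$-actions, and that $F$ is a $G$-homotopy (so $F(gx,t)=gF(x,t)$), one verifies that feeding in $(gx_0,hx_1)$ multiplies the first component of $\sigma_i$ by $g$ and the second by $h$. This shows $\TC^G(X)\le k=\TC^G(Y)$; exchanging the roles of $X$ and $Y$ and using a $G$-homotopy from $\phi\circ\psi$ to $\operatorname{Id}_Y$ gives the opposite inequality, hence equality. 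The one genuinely non-formal point to watch is that the construction stays within the one-step broken path space $\paths{X}\times_{X/G}\paths{X}$ and does not accumulate extra breaks — which is why the $F$-paths are attached on the outside, leaving the single discontinuity of $s_i$ as the only place where the path jumps between $G$-orbit representatives.
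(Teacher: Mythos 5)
Your construction is correct. The endpoint bookkeeping checks out: the first component of $\sigma_i(x_0,x_1)$ runs from $F(x_0,1)=x_0$ to $\psi(\gamma(1))$, the second from $\psi(\delta(0))$ to $F(x_1,1)=x_1$, and since $\gamma(1)$ and $\delta(0)$ lie in the same $G$-orbit, so do $\psi(\gamma(1))$ and $\psi(\delta(0))$ (because $\psi$ is a $G$-map); thus $\sigma_i$ genuinely lands in $\paths{X}\times_{X/G}\paths{X}$ with the single allowed jump inherited from $s_i$, and $\evalG{X}\circ\sigma_i=\iota_{V_i}$. The $(G\times G)$-equivariance verification is also right: it uses exactly the three facts you list, that $s_i$ is $(G\times G)$-equivariant, that $\psi$ is a $G$-map, and that $F$ is a $G$-homotopy, and the two group coordinates act independently on the two components.

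For comparison: the survey itself does not reproduce a proof, only citing \cite[Proposition 3.25]{LubMar}. Lubawski and Marzantowicz establish $G$-homotopy invariance via their identification $\TC^G(X)={}_{\daleth(X)}\cat_{G\times G}(X\times X)$ and the corresponding invariance of the equivariant $\mathscr{A}$-category, whereas you argue directly at the level of the broken path fibration $\evalG{X}$, pulling back the cover by $\phi\times\phi$ and correcting sections using the $G$-homotopy. Both routes are valid; yours is more elementary and self-contained (it does not presuppose the Clapp--Puppe machinery), while theirs is shorter once the $\mathscr{A}$-category characterization and its invariance are in place, and it dovetails with the Whitehead-style reformulations used elsewhere in their paper. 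One small presentational note: when you say it suffices to prove $\TC^G(X)\le\TC^G(Y)$ "since the hypothesis is symmetric," be explicit that the symmetric run of the argument uses the \emph{other} $G$-homotopy, from $\phi\circ\psi$ to $\operatorname{Id}_Y$ (you do say this at the end, so this is just a matter of ordering).
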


This definition of $\TC^G(X)$ in terms of path spaces is one of several equivalent definitions given in \cite{LubMar}. Another is given in terms of a specialization of the equivariant $\mathscr{A}$-category of Clapp and Puppe \cite{ClappPuppe86}.

\begin{defn}[{\cite[Definition 2.2]{LubMar}}]
Let $X$ be a $G$-space, and let $A\subseteq X$ be a $G$-invariant subset. The \emph{equivariant $A$-category of $X$}, denoted ${}_A\cat_G(X)$, is defined to be the minimal $k$ such that $X$ may be covered by $G$-invariant open sets $U_0,\ldots , U_k$ such that each inclusion $\iota_{U_i}:U_i\hookrightarrow X$ is $G$-homotopic to a map with values in $A$.
\end{defn}

\begin{ex}
Let $G=\{e\}$ be the trivial group, let $X$ be any space, and let $\Delta(X)\subseteq X\times X$ be the diagonal. Then
\[
\TC(X)={}_{\Delta(X)}\cat_{\{e\}}(X\times X)=:{}_{\Delta(X)} \cat (X\times X).
\]
\end{ex}

Note that for a $G$-space $X$ the diagonal $\Delta(X)\subseteq X\times X$ does not carry a natural action of $G\times G$. Lubawski and Marzantowicz introduce the \emph{saturated diagonal}
\[
\daleth(X):=(G\times G)\Delta(X) = \{(x,gx) \mid x\in X, g\in G\},
\]
which is a $(G\times G)$-invariant subset $\daleth(X)\subseteq X\times X$. 
 
 \begin{prop}[{\cite[Lemmas 3.5, 3.8]{LubMar}}]\label{prop:ClappPuppe}
 Let $X$ be a $G$-space. Then:
 \begin{enumerate}
 \item $\TC_G(X)={}_{\Delta(X)} \cat_G(X\times X)$;
 \item $\TC^G(X)={}_{\daleth(X)} \cat_{G\times G}(X\times X)$.
 \end{enumerate}
 \end{prop}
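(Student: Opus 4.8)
The plan is to prove both identities directly from the definitions, by setting up, over a fixed $G$-invariant (resp.\ $(G\times G)$-invariant) open set $U$, an explicit two-way translation between local equivariant sections of the relevant path fibration and local equivariant homotopies deforming the inclusion $\iota_U$ into the relevant invariant subspace of $X\times X$. For (1): given a $G$-section $s\colon U\to\paths{X}$ of $\eval{X}$ (so each $s(x,y)$ is a path from $x$ to $y$), the formula $((x,y),t)\mapsto(s(x,y)(t),y)$ is a $G$-homotopy in $X\times X$ from $\iota_U$ to a map landing in $\Delta(X)$, which yields ${}_{\Delta(X)}\cat_G(X\times X)\le\TC_G(X)$. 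Conversely, given a $G$-homotopy $H=(H',H'')\colon U\times[0,1]\to X\times X$ from $\iota_U$ to a map into $\Delta(X)$, concatenating $t\mapsto H'((x,y),t)$ with the reverse of $t\mapsto H''((x,y),t)$ produces, continuously and $G$-equivariantly in $(x,y)$, a path from $x$ to $y$ — the two halves matching at the midpoint exactly because $H_1$ lands in the diagonal — hence a $G$-section of $\eval{X}$ over $U$, giving $\TC_G(X)\le{}_{\Delta(X)}\cat_G(X\times X)$.

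Part (2) is proved by the same scheme, with broken paths replacing paths. From a $(G\times G)$-section $s=(s_1,s_2)\colon U\to\paths{X}\times_{X/G}\paths{X}$ of $\evalG{X}$ one forms the $(G\times G)$-homotopy $((x,y),t)\mapsto(s_1(x,y)(t),s_2(x,y)(1-t))$, deforming $\iota_U$ to the map $(x,y)\mapsto(s_1(x,y)(1),s_2(x,y)(0))$, whose image lies in $\daleth(X)$ \emph{precisely} because of the pullback condition $G\,s_1(x,y)(1)=G\,s_2(x,y)(0)$; hence ${}_{\daleth(X)}\cat_{G\times G}(X\times X)\le\TC^G(X)$. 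Conversely, a $(G\times G)$-homotopy $H=(H',H'')$ from $\iota_U$ to a map into $\daleth(X)$ produces the broken path $(t\mapsto H'((x,y),t),\,t\mapsto H''((x,y),1-t))$, which genuinely lies in $\paths{X}\times_{X/G}\paths{X}$ because $H_1$ takes values in $\daleth(X)$, and which gives a $(G\times G)$-section of $\evalG{X}$ over $U$; so $\TC^G(X)\le{}_{\daleth(X)}\cat_{G\times G}(X\times X)$.

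Throughout, one must keep track of equivariance, but this is routine: the group acts on path spaces by pointwise post-composition and on products diagonally (resp.\ coordinatewise), so applying a group element commutes with reparametrizing, concatenating and reversing paths, and each construction above therefore carries equivariant data to equivariant data. The one genuinely conceptual point — the step I would flag as the crux — is the observation used in (2) that the ``jump'' condition $G\gamma(1)=G\delta(0)$ defining $\paths{X}\times_{X/G}\paths{X}$ is exactly the condition guaranteeing that the relevant pair of endpoints lies on the saturated diagonal $\daleth(X)=\{(x,gx)\mid x\in X,\ g\in G\}$; this is what makes the correspondence close up on the nose. One could alternatively argue ``softly'' from a general identity $\secat_\Gamma(p)={}_A\cat_\Gamma(B)$, valid whenever $p\colon E\to B$ is a $\Gamma$-fibration and $A\subseteq B$ is a $\Gamma$-invariant subspace receiving a $\Gamma$-homotopy equivalence from $E$ over $B$ — applied here to the $\Gamma$-deformation retractions of $\paths{X}$ onto its constant paths and of $\paths{X}\times_{X/G}\paths{X}$ onto its constant broken paths — but the cost of that route is verifying that $\eval{X}$ and $\evalG{X}$ are honest equivariant Hurewicz fibrations.
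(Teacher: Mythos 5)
Your proof is correct, and it is essentially the standard argument underlying \cite[Lemmas 3.5, 3.8]{LubMar}: the explicit two-way dictionary between local equivariant sections and local equivariant deformations into $\Delta(X)$ (resp.\ $\daleth(X)$), obtained by running a path (resp.\ broken path) forward in one coordinate and backward in the other, is precisely what the abstract ``$\secat_\Gamma$ of a fibrational substitute equals ${}_A\cat_\Gamma$'' lemma unfolds to in this case — you even identify that abstract route as the alternative. The crux you flag for part (2), that the jump condition $G\gamma(1)=G\delta(0)$ matches the saturated-diagonal condition, is indeed the key observation, and your equivariance bookkeeping is right.
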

 
 \begin{rem}
 The arguments used to prove Proposition \ref{prop:ClappPuppe} show more generally that if $A\subseteq X$ is a $G$-invariant subset and $p:A'\to X$ is a $G$-fibrational substitute of the inclusion $\iota_A:A\subseteq X$, then ${}_A\cat_G(X)=\secat_G(p)$. 
 \end{rem}

Analogously to the non-equivariant case, there is a Whitehead definition of ${}_A\cat_G(X)$ in terms of a ``fat wedge'' construction. Given a $G$-invariant subset $A\subseteq X$, let
\[
F_A^k(X):=\{(x_0,\ldots , x_k) \in X^{k+1} \mid x_i\in A\mbox{ for some }0\leq i\leq k\},
\]
the \emph{$(k+1)$-fold fat $A$-sum}. Note that $F_A^k(X)\subseteq X^{k+1}$ is $G$-invariant under the diagonal action of $G$ on $X^{k+1}$.  

\begin{defn}
The \emph{Whitehead equivariant $A$-category of $X$}, denoted ${}_A\cat_G^{\rm Wh}(X)$, is defined to be the minimal integer $k$ such that the $(k+1)$-fold diagonal map $\Delta_{k+1}:X\to X^{k+1}$ is $G$-homotopic to a map with values in $F_A^k(X)$.
\end{defn}

\begin{thm}[{\cite[Theorem 2.7]{LubMar}}]\label{thm:Whitehead}
Suppose $G$ is compact Lie, $X$ is a compact $G$-ANR, and the inclusion $\iota_A:A\hookrightarrow X$ is a closed $G$-cofibration. Then  
\[
{}_A\cat_G(X)={}_A\cat_G^{\rm Wh}(X).
\]
\end{thm}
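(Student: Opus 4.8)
The plan is to adapt the classical argument equating Lusternik--Schnirelmann category with its Whitehead formulation (via fat wedges) to the equivariant, relative setting, using equivariant homotopy extension for $G$-cofibrations. Throughout, write $Y = X^{k+1}$ with the diagonal $G$-action, $W = F_A^k(X) \subseteq Y$, and let $\Delta_{k+1}\co X \to Y$ be the iterated diagonal. One inequality is straightforward: if $X$ is covered by $G$-invariant open sets $U_0,\ldots,U_k$ with each $\iota_{U_i}$ being $G$-homotopic to a map $f_i\co U_i \to A$, we want to produce a $G$-homotopy of $\Delta_{k+1}$ into $W$. Using the hypothesis that $X$ is a compact $G$-ANR and $G$ is compact Lie, one obtains a $G$-invariant partition of unity $\{\lambda_i\}$ subordinate to $\{U_i\}$ (averaging an ordinary partition of unity over $G$, as in the proof of Theorem \ref{thm:productTC_G}). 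The functions $\lambda_i$ and the homotopies $\iota_{U_i}\simeq_G f_i$ then assemble into a single $G$-homotopy $X \times [0,1] \to Y$ starting at $\Delta_{k+1}$ and ending at a map whose $i$-th coordinate lands in $A$ on the set where $\lambda_i(x) = 1$; since the $\lambda_i$ sum to $1$, every point of $X$ lies in some such set, so the terminal map has image in $W = F_A^k(X)$. This gives ${}_A\cat_G^{\rm Wh}(X) \leq {}_A\cat_G(X)$.

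For the reverse inequality, suppose $\Delta_{k+1}$ is $G$-homotopic to some $G$-map $g\co X \to W$. The target $W = F_A^k(X)$ is the union of the $k+1$ $G$-invariant open-in-$W$ pieces $W_i := \{(x_0,\ldots,x_k) \mid x_i \in A\}$ — but these are only open in $W$, not in $Y$, so one cannot simply pull them back to open sets of $X$. This is where the $G$-cofibration hypothesis on $\iota_A$ enters: it lets one thicken. The idea is to replace $A$ by a $G$-invariant open neighbourhood $V \supseteq A$ in $X$ that $G$-deformation retracts onto $A$; such a $V$ exists because a closed $G$-cofibration into a $G$-ANR admits a $G$-invariant mapping-cylinder neighbourhood (one can take the sets $\{x : u(x) < 1\}$ for a suitable $G$-invariant "Strøm function" $u\co X \to [0,1]$ with $A \subseteq u^{-1}(0)$, then average). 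Then $\widetilde W_i := \{(x_0,\ldots,x_k) \mid x_i \in V\}$ is genuinely open in $Y$ and $G$-invariant, the $\widetilde W_i$ cover $W$, and $\widetilde W_i$ $G$-deformation retracts onto $W_i$, hence onto $\{x_i \in A\}$. Setting $U_i := g^{-1}(\widetilde W_i)$ (where we first replace $g$ by a genuine $G$-homotopic map, using equivariant homotopy extension to arrange that the deformation of $\Delta_{k+1}$ into $W$ can be taken through maps we control), we get a $G$-invariant open cover $U_0,\ldots,U_k$ of $X$, and composing $g|_{U_i}$ with the retraction $\widetilde W_i \to \{x_i \in A\}$ followed by the $i$-th projection $X^{k+1}\to X$ gives a $G$-map $U_i \to A$ that is $G$-homotopic to $\iota_{U_i}$ (the $G$-homotopy being the retracting deformation composed with the reverse of the original $G$-homotopy $\Delta_{k+1} \simeq_G g$, restricted to $U_i$). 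Hence ${}_A\cat_G(X) \leq k$.

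I expect the main obstacle to be the construction of the $G$-invariant tubular/mapping-cylinder neighbourhood $V$ of $A$ with a $G$-deformation retraction onto $A$, together with the careful bookkeeping of which maps are being deformed and in which direction. In the non-equivariant case this is the standard fact that a closed cofibration has an open mapping-cylinder neighbourhood; equivariantly one needs to know that for $G$ compact Lie and $X$ a $G$-ANR, a closed $G$-cofibration $A \hookrightarrow X$ admits such a neighbourhood — this follows from the existence of $G$-invariant Strøm structures, which in turn uses averaging over $G$ of the (non-equivariant) halo and deformation data and the fact that the relevant function spaces are convex enough for averaging to preserve the required properties. One should also be slightly careful that $F_A^k(X)$, while $G$-invariant in $X^{k+1}$, need not be open or closed there, so all the thickening must happen at the level of the open sets $\widetilde W_i$; the ANR hypothesis on $X$ is what ensures these behave well. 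The rest — assembling partitions of unity, composing homotopies — is routine given the tools already recorded in the excerpt.
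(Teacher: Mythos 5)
The paper itself does not reproduce a proof of this theorem; it only cites the preprint \cite{LubMarPre} (and notes the non-equivariant case is in Fass\'o Velenik's thesis), so there is no in-paper proof to compare against. Your general approach --- partition-of-unity assembly for one inequality, and thickening $A$ to an open $G$-invariant neighbourhood via a $G$-invariant Str\o m structure for the other --- is the standard route and matches what the cited reference does.

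However, the easy direction as you have written it contains a genuine gap. You claim that the assembled homotopy ends at a map whose $i$-th coordinate lies in $A$ on the set where $\lambda_i(x)=1$, and then assert that ``since the $\lambda_i$ sum to $1$, every point of $X$ lies in some such set.'' This is false: a partition of unity summing to $1$ does not force any individual $\lambda_i$ to attain the value $1$ at a given point (on an overlap of two $U_i$ one can have $\lambda_0=\lambda_1=\tfrac12$ everywhere, say). So the terminal map of your assembled homotopy need not land in $F_A^k(X)$. The standard fix is a reparametrization: set $\mu_i:=\min\bigl(1,(k+1)\lambda_i\bigr)$, which is still $G$-invariant and supported in $U_i$; since for each $x$ some $\lambda_i(x)\ge 1/(k+1)$, some $\mu_i(x)=1$, and the assembly $F(x,t)_i = H_i\bigl(x, t\,\mu_i(x)\bigr)$ (extended by $x$ off $\operatorname{supp}\mu_i$) now ends in the fat $A$-sum. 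Alternatively, shrink $\{U_i\}$ to a cover $\{V_i\}$ with $\overline{V_i}\subseteq U_i$ and replace the partition of unity by $G$-invariant Urysohn functions equal to $1$ on $V_i$ and supported in $U_i$; either repair closes the gap. Your hard direction --- pulling back the open sets $\widetilde W_i = \{x_i\in V\}$ along $g$, where $V$ is a $G$-invariant halo of $A$ coming from an equivariant Str\o m structure, and concatenating the coordinate homotopy $H_i|_{U_i}$ with the retracting homotopy $V\times I\to X$ --- is correct, and correctly identifies the cofibration hypothesis as the place where the real work happens.
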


A proof of Theorem \ref{thm:Whitehead} can be found in the preprint version \cite{LubMarPre} of \cite{LubMar}. The non-equivariant case was observed in the FU Berlin thesis of A.\ Fass\'{o} Velenik \cite{Fasso}. Note that the diagonal inclusion $\Delta(X)\hookrightarrow X\times X$ is a closed $G$-cofibration whenever $X$ is a metric $G$-ANR or $G$-CW complex, as follows from \cite{Lewis}, so in either of these cases one has
\[
\TC_G(X)={}_{\Delta(X)} \cat_G^{\rm Wh}(X\times X).
\]
The question of when the saturated diagonal inclusion $\daleth(X)\hookrightarrow X\times X$ is a $(G\times G)$-cofibration appears to be more subtle, and is left open in \cite{LubMar}. It is shown in \cite[Theorem 3.15]{LubMar} that if $G$ is a finite group and $X$ is a compact $G$-ANR, then $\daleth(X)\hookrightarrow X\times X$ is a $(G\times G)$-cofibration, and so
\[
\TC^G(X)={}_{\daleth(X)} \cat_{G\times G}^{\rm Wh}(X\times X)
\]
under these hypotheses. The Whitehead definitions are purely homotopical in that they do not involve open sets, and so different proof techniques can be applied. The authors apply the Whitehead definitions to prove the following product formulae.

\begin{thm}[{\cite[Theorem 3.18]{LubMar}}]\label{thm:productGH}
Let $G$ and $H$ be compact Lie groups, let $X$ be a compact $G$-ANR and let $Y$ be a compact $H$-ANR. Then
\[
\TC_{G\times H}(X\times Y)\leq \TC_G(X)+\TC_H(Y).
\]
Furthermore, if $\daleth(X)\subseteq X\times X$ is a $(G\times G)$-cofibration and $\daleth(Y)\subseteq Y\times Y$ is an $(H\times H)$-cofibration (for example if $G$ and $H$ are finite), then
\[
\TC^{G\times H}(X\times Y)\leq \TC^G(X) + \TC^H(Y).
\]
\end{thm}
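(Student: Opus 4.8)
The plan is to deduce both inequalities from the Whitehead characterization of equivariant $A$-category (Theorem \ref{thm:Whitehead}), using the identifications in Proposition \ref{prop:ClappPuppe} together with the cofibration facts recorded after Theorem \ref{thm:Whitehead}. Write $m=\TC_G(X)$ and $n=\TC_H(Y)$ for the first statement, and $m=\TC^G(X)$, $n=\TC^H(Y)$ for the second. In either case the proof is the same formal argument run with different ambient groups and different $A$-subsets: in the first case the group is $G$ (resp. $H$) acting diagonally on $X\times X$ (resp. $Y\times Y$) with $A=\Delta(X)$ (resp. $\Delta(Y)$); in the second case the group is $G\times G$ (resp. $H\times H$) and $A=\daleth(X)$ (resp. $\daleth(Y)$). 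The hypotheses (compact Lie groups, compact $G$-ANR's, and — in the saturated-diagonal case — the assumed cofibrancy of $\daleth(X)$ and $\daleth(Y)$, which holds automatically for finite groups by \cite[Theorem 3.15]{LubMar}) are exactly what is needed for Theorem \ref{thm:Whitehead} to apply to each factor, and one checks that they are inherited by the product (a product of compact Lie groups is compact Lie, a product of compact ANR's is a compact ANR, and an external product of closed cofibrations is a closed cofibration).

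The core is a subadditivity statement for the Whitehead invariant under external products. Concretely, I would prove: if $\Gamma$ acts on $Z$ and $\Lambda$ acts on $W$, with $A\subseteq Z$ a $\Gamma$-invariant subset and $B\subseteq W$ a $\Lambda$-invariant subset, then
\[
{}_{A\times B}\cat^{\rm Wh}_{\Gamma\times\Lambda}(Z\times W)\leq {}_A\cat^{\rm Wh}_\Gamma(Z)+{}_B\cat^{\rm Wh}_\Lambda(W).
\]
Granting this, apply it with $(\Gamma,Z,A)=(G,X\times X,\Delta(X))$ and $(\Lambda,W,B)=(H,Y\times Y,\Delta(Y))$ for the first inequality, observing that $\Delta(X)\times\Delta(Y)$ is carried into $\Delta(X\times Y)$ under the shuffle homeomorphism $(X\times X)\times(Y\times Y)\cong (X\times Y)\times(X\times Y)$, and similarly with the saturated diagonals for the second inequality — here one must check that $\daleth(X)\times\daleth(Y)$ maps onto (or into a set $(G\times H)$-homotopy equivalent to, relative to the total space) $\daleth(X\times Y)$ under the shuffle, where $\daleth(X\times Y)$ is formed with respect to the $G\times H$ action. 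Combined with Proposition \ref{prop:ClappPuppe} and Theorem \ref{thm:Whitehead} (applicable under the stated hypotheses), this yields $\TC_{G\times H}(X\times Y)\leq m+n$ and $\TC^{G\times H}(X\times Y)\leq m+n$.

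For the subadditivity statement itself, the argument is the standard fat-wedge one made equivariant. Suppose the $(m+1)$-fold diagonal $\Delta_{m+1}\colon Z\to Z^{m+1}$ is $\Gamma$-homotopic to a map $f$ landing in $F_A^m(Z)$, and the $(n+1)$-fold diagonal $\Delta_{n+1}\colon W\to W^{n+1}$ is $\Lambda$-homotopic to a map $g$ landing in $F_B^n(W)$. One forms the map $f\times g$ after a suitable reindexing of the $(m+1)(n+1)$ factors, and checks by a pigeonhole-type argument that the image lies in the fat $(A\times B)$-sum $F_{A\times B}^{N}(Z\times W)$ for $N=$ the appropriate number of factors; the point is that to express the diagonal as $(\Gamma\times\Lambda)$-homotopic to something landing in a fat sum of $A\times B$ with only $m+n+1$ "slots," one groups the $(m+1)(n+1)$ coordinates so that in every grouped tuple at least one $Z$-coordinate is in $A$ and at least one $W$-coordinate is in $B$ — exactly as in the non-equivariant proof of $\cat(Z\times W)\leq\cat Z+\cat W$ via fat wedges, which is what \cite[Theorem 11]{Far03} reduces to. Equivariance is automatic since all the homotopies involved are products of the given equivariant homotopies and the reindexing maps are equivariant for the permuted actions.

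The main obstacle I anticipate is the bookkeeping in the last paragraph — precisely matching the fat-sum index $N$ to $m+n$ and verifying that the reindexed external product of the two fat-sum-valued maps genuinely lands in $F_{A\times B}^{m+n}$ — together with, in the saturated-diagonal case, the identification of $\daleth(X)\times\daleth(Y)$ with $\daleth(X\times Y)$ under the shuffle homeomorphism. The latter is where the hypothesis that the saturated diagonals be cofibrations is really used: one needs Theorem \ref{thm:Whitehead} for the product, which forces $\daleth(X\times Y)\hookrightarrow(X\times Y)^{\times 2}$ to be a $(G\times H)\times(G\times H)$-cofibration, and this should follow from the product cofibration $\daleth(X)\times\daleth(Y)\hookrightarrow (X\times X)\times(Y\times Y)$ being one, since the two subsets agree under the shuffle. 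Everything else is a routine translation of the classical argument into the equivariant setting, using that compact Lie groups admit equivariant partitions of unity and that the relevant point-set hypotheses pass to products.
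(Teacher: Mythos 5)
Your high-level strategy is the same one the chapter indicates for this theorem: pass from $\TC_G$, $\TC^G$ to the Whitehead $\mathscr{A}$-category via Proposition~\ref{prop:ClappPuppe} and Theorem~\ref{thm:Whitehead} (using the cofibrancy hypotheses exactly as you describe), prove a product inequality for ${}_A\cat^{\rm Wh}$ under external products, and identify $\Delta(X)\times\Delta(Y)$ (resp.\ $\daleth(X)\times\daleth(Y)$) with $\Delta(X\times Y)$ (resp.\ $\daleth(X\times Y)$) under the shuffle homeomorphism. The shuffle identification for the saturated diagonals is indeed exact: $((x,gx),(y,hy))\mapsto ((x,y),(gx,hy)) = ((x,y),(g,h)(x,y))$, so $\daleth(X)\times\daleth(Y)$ corresponds precisely to $\daleth(X\times Y)$ for the $G\times H$ action.

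The gap is in the proof you sketch of the key lemma
\[
{}_{A\times B}\cat^{\rm Wh}_{\Gamma\times\Lambda}(Z\times W)\leq {}_A\cat^{\rm Wh}_\Gamma(Z)+{}_B\cat^{\rm Wh}_\Lambda(W).
\]
The ``pigeonhole-type'' reindexing cannot work as stated. To turn $f\times g\co Z\times W\to F_A^m(Z)\times F_B^n(W)\subseteq Z^{m+1}\times W^{n+1}$ into a map landing in $F_{A\times B}^{m+n}(Z\times W)\subseteq (Z\times W)^{m+n+1}$ by choosing, for each $k\in\{0,\dots,m+n\}$, a coordinate pair $(i_k,j_k)\in\{0,\dots,m\}\times\{0,\dots,n\}$, you would need: for every pair of nonempty subsets $I\subseteq\{0,\dots,m\}$ and $J\subseteq\{0,\dots,n\}$ (these are the possible sets of ``$A$-slots'' of $f(z)$ and ``$B$-slots'' of $g(w)$), some $(i_k,j_k)\in I\times J$. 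Taking $I,J$ to be singletons forces every one of the $(m+1)(n+1)$ pairs to occur among the $(i_k,j_k)$, which is impossible with only $m+n+1$ of them once $m,n\ge1$. Nor can you ``group'' coordinates, since there is no multiplication on $Z\times W$ with which to merge them. The correct argument for the Whitehead product inequality is a genuine deformation argument, in which the closed cofibration hypothesis on $A\hookrightarrow Z$ and $B\hookrightarrow W$ (and hence on the fat sums) is used to build the homotopy, rather than a pointwise combinatorial selection; this is where the work in \cite[Theorem 3.18]{LubMar} actually lies. Finally, the appeal to \cite[Theorem 11]{Far03} is off: Farber's proof of the $\TC$ product formula is the open-cover/partition-of-unity argument, not a fat-wedge argument, and does not transfer to the Whitehead setting. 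So the scaffolding of your proof is right and matches the paper, but the central lemma is asserted, not proved, and the sketch you give for it would not go through.
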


As pointed out in \cite[Remark 3.20]{LubMar}, if $G=H$ and $X$ and $Y$ are $G$-spaces, the inequality
\[
\TC^G(X\times Y) \leq \TC^G(X)+\TC^G(Y)
\]
 for the diagonal action on $X\times Y$, analogous to Theorem \ref{thm:productTC_G}, is false in general. See Example \ref{ex:TC^GS1xS1} below. 

We now return to what apparently motivated the definition of invariant topological complexity: its relationship to the ordinary topological complexity of the orbit space. It is fairly easy to check from the definitions that 
\[
\TC(X/G)\leq \TC^G(X)
\]
for any $G$-space $X$. Using the Covering Homotopy Theorem of Palais (\cite{Palais}, see also \cite[Theorem II.7.3]{Bredon}) one can prove the opposite inequality in case $G$ is compact Lie and the action has a single orbit type.

\begin{thm}[{\cite[Theorem 3.10]{LubMar}}]\label{thm:TC^Gfree}
Let $G$ be a compact Lie group, and let $X$ be a $G$-space with a single orbit type. Then $\TC(X/G)=\TC^G(X)$.
\end{thm}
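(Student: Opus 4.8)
The plan is to prove the two inequalities $\TC(X/G)\le\TC^G(X)$ and $\TC^G(X)\le\TC(X/G)$ separately; only the second uses the hypotheses on $G$ and on the orbit type. The first holds for an arbitrary $G$-space: if $U_0,\dots,U_k$ are $(G\times G)$-invariant open sets covering $X\times X$ with $(G\times G)$-maps $s_i\colon U_i\to\paths{X}\times_{X/G}\paths{X}$ satisfying $\evalG{X}\circ s_i=\iota_{U_i}$, write $U_i=(q\times q)^{-1}(V_i)$ with $V_i\subseteq X/G\times X/G$ open, and set $\sigma_i(\bar x,\bar y):=(q\circ\gamma)\ast(q\circ\delta)$ where $(\gamma,\delta)=s_i(x,y)$ for any $(x,y)\in U_i$ lying over $(\bar x,\bar y)$. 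The concatenation is defined because $G\gamma(1)=G\delta(0)$, the result is independent of the chosen lift because $s_i$ is $(G\times G)$-equivariant and $q$ is $G$-invariant, and $\sigma_i$ is continuous because $q\times q$ admits local sections; this exhibits $\TC(X/G)\le k$. So the work is in the reverse inequality.

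The key tool I would use is that, under the hypotheses, the orbit map $q\colon X\to X/G$ admits a \emph{$G$-equivariant path-lifting function}: a continuous map
\[
\Lambda\colon\{(x,c)\in X\times\paths{(X/G)}\mid q(x)=c(0)\}\longrightarrow\paths{X}
\]
with $\Lambda(x,c)(0)=x$, $q\circ\Lambda(x,c)=c$, and $\Lambda(gx,c)=g\,\Lambda(x,c)$ for all $g\in G$. Locally this is immediate: by the slice theorem a $G$-neighbourhood of an orbit is $G$-homeomorphic to $G\times_{G_x}S\cong(G/G_x)\times S$, where the single-orbit-type hypothesis forces the $G_x$-action on the slice $S$ to be trivial (a subgroup of $G_x$ that is $G$-conjugate to $G_x$ must equal $G_x$), so $G$ acts by left translation on the first factor and trivially on $S$, $q$ is the projection to $S\subseteq X/G$, and $\Lambda\big((aG_x,s),c\big):=\big(t\mapsto(aG_x,c(t))\big)$ is a visibly $G$-equivariant lifting function over this chart. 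The global $\Lambda$ is then assembled from these local ones by the standard patching argument over a numerable trivializing cover of $X/G$, with equivariance automatically preserved because $G$ acts trivially on the base; alternatively, such a $\Lambda$ can be extracted from Palais's Covering Homotopy Theorem \cite{Palais} (cf.\ \cite[Theorem II.7.3]{Bredon}). I expect this to be the main obstacle: one must check carefully that one orbit type trivializes the slice action, and one must secure the point-set hypotheses (slices exist, $X/G$ is paracompact so that the trivializing cover is numerable, etc.)\ that make the patching legitimate.

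Granting $\Lambda$, the rest is a direct construction. Fix a motion planner $\{(V_i,\sigma_i)\}_{i=0}^{k}$ realizing $k=\TC(X/G)$, so the $V_i$ cover $X/G\times X/G$ and $\pi_{X/G}\circ\sigma_i=\iota_{V_i}$, where $\pi_{X/G}\colon\paths{(X/G)}\to X/G\times X/G$ is the endpoint fibration. Put $U_i:=(q\times q)^{-1}(V_i)$; these are $(G\times G)$-invariant open sets covering $X\times X$. For $(x,y)\in U_i$, let $c:=\sigma_i(q(x),q(y))$, split it at its midpoint by $c_1(t):=c(t/2)$ and $c_2(t):=c((1+t)/2)$, and define
\[
s_i(x,y):=\big(\,\Lambda(x,c_1),\ \overline{\Lambda(y,\overline{c_2})}\,\big),
\]
with $\overline{(-)}$ denoting path reversal. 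Then $\Lambda(x,c_1)$ starts at $x$ and projects to $c_1$, while $\overline{\Lambda(y,\overline{c_2})}$ ends at $y$ and projects to $c_2$; since both $\Lambda(x,c_1)(1)$ and $\overline{\Lambda(y,\overline{c_2})}(0)$ lie over $c(1/2)$, the pair lies in $\paths{X}\times_{X/G}\paths{X}$ and $\evalG{X}\circ s_i=\iota_{U_i}$. For $(G\times G)$-equivariance: $c$, $c_1$, $c_2$ depend only on $q(x),q(y)$ and so are unchanged under $(x,y)\mapsto(gx,hy)$, whence $s_i(gx,hy)=\big(g\,\Lambda(x,c_1),\ h\,\overline{\Lambda(y,\overline{c_2})}\big)=(g,h)\cdot s_i(x,y)$ using $\Lambda(gx,c_1)=g\Lambda(x,c_1)$ and $\overline{h\eta}=h\overline{\eta}$. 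Hence $\{(U_i,s_i)\}_{i=0}^{k}$ is admissible for $\TC^G(X)$, giving $\TC^G(X)\le k=\TC(X/G)$, and combined with the first paragraph this yields the equality.
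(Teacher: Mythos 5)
Your proof is correct and follows essentially the same route that the paper attributes to Lubawski--Marzantowicz: the easy inequality $\TC(X/G)\le\TC^G(X)$ directly from the definitions, and Palais's covering homotopy theorem---repackaged here as a $G$-equivariant path-lifting function $\Lambda$ for the orbit map, with a concrete local model coming from the slice theorem and the single-orbit-type hypothesis---for the hard direction. One small remark on the easy direction: you assert (correctly) that it holds for an arbitrary $G$-space, but then justify continuity of $\sigma_i$ via local sections of $q\times q$, which do not exist in general; the cleaner and fully general justification is that $q$, hence $q\times q$, is an open surjection and therefore a quotient map, so the orbit-constant continuous map $(x,y)\mapsto(q\circ\gamma)\ast(q\circ\delta)$ on $U_i$ descends to a continuous map on $V_i$.
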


\begin{ex}\label{ex:TC^GS1xS1}
Let $G=S^1$ act on $X=S^1$ by rotation. Since the action is free and transitive, $\TC^G(X)=\TC(X/G)=0$. Now consider the diagonal $G$-action on the torus $X\times X=S^1\times S^1$. This action is still free, and the orbit space $(X\times X)/G$ is diffeomorphic to $S^1$. Therefore $\TC^G(X\times X)=\TC((X\times X)/G)=\TC(S^1)=1$. Note in particular that $\TC^G(X\times X)>\TC^G(X)+\TC^G(X)$. 
\end{ex}

\begin{ex} 
Let $G=C_2$ act on a closed orientable surface $\Sigma$ of positive genus, via an orientation-reversing free involution. After the calculation of the topological complexity of non-orientable surfaces due to Dranishnikov \cite{Dranish2016,Dranish2017} and Cohen--Vandembroucq \cite{CohenVan}, we have $\TC^G(\Sigma)=\TC(\Sigma/G)=4$. 
\end{ex}

\begin{ex}
Let $G=C_2$ act on the sphere $S^n$ via the antipodal map. Then $\TC^G(S^n)=\TC(\R P^n)$, and the latter equals the immersion dimension of the real projective space $\R P^n$ for $n\neq 1,3,7$, by the result of Farber--Tabachnikov--Yuzvinsky \cite{FTY}.
\end{ex}

In fact more is true: the invariant topological complexity is invariant under \emph{quotients} and \emph{induction}. Let us now explain what we mean by this. Let $X$ be a $G$-space. If $K\lhd G$ is a normal subgroup, one can check that the quotient group $G/K$ acts on the orbit space $X/K$ via $(gK)(Kx)= Kgx$. If $G\leq H$, one can induce an $H$-space from $X$ by taking $H\times_G X$, the orbit space of the diagonal action of $G$ on $H\times X$, where $G$ acts on $H$ via the group operation.

\begin{thm}[{\cite[Corollary 5.10]{ACGO}}]\label{thm:TC^GMorita}
Let $K\lhd G\le H$ be compact Lie groups and let $X$ be a metrizable $G$-space. Then
\[
\TC^{H}(H\times_G X)=\TC^G(X),
\]
and assuming $K$ acts freely on $X$,
\[
\TC^{G/K}(X/K)=\TC^G(X).
\]
\end{thm}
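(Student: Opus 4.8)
The plan is to reduce both identities to general facts about equivariant sectional category. Using the Clapp--Puppe reformulation $\TC^G(X)={}_{\daleth(X)}\cat_{G\times G}(X\times X)$ from \propref{prop:ClappPuppe}, together with the remark following it (which replaces ${}_A\cat_G$ by the equivariant sectional category of a $G$-fibrational substitute of $\iota_A$), I would fix once and for all a $(G\times G)$-fibrational substitute $f\colon A'\to X\times X$ of the inclusion $\daleth(X)\hookrightarrow X\times X$, so that $\TC^G(X)=\secat_{G\times G}(f)$. The two structural observations that drive everything are: writing $Y=H\times_G X$, there are canonical identifications $Y\times Y=(H\times H)\times_{G\times G}(X\times X)$ and, inside it, $\daleth(Y)=(H\times H)\times_{G\times G}\daleth(X)$; and, when $K$ acts freely, $(X\times X)/(K\times K)=(X/K)\times(X/K)$ with $\daleth(X)/(K\times K)=\daleth(X/K)$. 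Thus the fibration computing $\TC^H(Y)$ (resp.\ $\TC^{G/K}(X/K)$) is, up to fibrational substitution, obtained from $f$ by applying $(H\times H)\times_{G\times G}(-)$ (resp.\ by passing to $(K\times K)$-orbit spaces).

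For the induction statement I would isolate the lemma: if $\Gamma\le\Lambda$ is a closed subgroup of a compact Lie group and $f\colon E\to B$ is a $\Gamma$-fibration, then $\secat_\Lambda(\Lambda\times_\Gamma f)=\secat_\Gamma(f)$. One inequality is immediate: a $\Gamma$-invariant open cover $U_0,\dots,U_k$ of $B$ with $\Gamma$-sections $\sigma_i$ is carried by the functor $\Lambda\times_\Gamma(-)$ to a $\Lambda$-invariant open cover $\Lambda\times_\Gamma U_i$ of $\Lambda\times_\Gamma B$ with $\Lambda$-sections $\Lambda\times_\Gamma\sigma_i$. For the reverse inequality I would use a slice argument: given a $\Lambda$-invariant open cover $V_0,\dots,V_k$ of $\Lambda\times_\Gamma B$ with $\Lambda$-sections $\tau_i$, set $U_i=V_i\cap B$ where $B\cong\{[e,b]\}$ is the canonical $\Gamma$-slice; then the $U_i$ form a $\Gamma$-invariant open cover of $B$, and since $(\Lambda\times_\Gamma E)|_B\cong E$ as $\Gamma$-spaces over $B$, the restrictions $\tau_i|_{U_i}$ are $\Gamma$-sections of $f$ over $U_i$. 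Granting that $\Lambda\times_\Gamma(-)$ preserves fibrations and carries $(G\times G)$-fibrational substitutes to $(H\times H)$-fibrational substitutes, applying the lemma with $\Gamma=G\times G$, $\Lambda=H\times H$ and $f$ our substitute gives $\TC^H(Y)=\secat_{H\times H}(\Lambda\times_\Gamma f)=\secat_{G\times G}(f)=\TC^G(X)$.

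For the quotient statement the parallel lemma is: if $\Lambda\lhd\Gamma$ acts freely on the base of a $\Gamma$-fibration $f\colon E\to B$ (hence freely on $E$, since $\lambda e=e$ forces $\lambda f(e)=f(e)$), then $\secat_\Gamma(f)=\secat_{\Gamma/\Lambda}(f/\Lambda)$ for the induced fibration $f/\Lambda\colon E/\Lambda\to B/\Lambda$. Here both inequalities are easy and symmetric: a $\Gamma$-invariant open cover of $B$ with $\Gamma$-sections is in particular $\Lambda$-invariant and $\Lambda$-equivariant, hence descends along the principal $\Lambda$-bundles $B\to B/\Lambda$ and $E\to E/\Lambda$ to a $\Gamma/\Lambda$-invariant open cover of $B/\Lambda$ with $\Gamma/\Lambda$-sections, and conversely such data pulls back. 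Applying this with $\Gamma=G\times G$, $\Lambda=K\times K$ and $f$ our substitute --- using that $K$ acting freely on the metrizable space $X$ makes $X\to X/K$ a principal $K$-bundle (Gleason), so that the relevant spaces are principal $(K\times K)$-bundles over their orbit spaces --- yields $\TC^{G/K}(X/K)=\secat_{(G/K)\times(G/K)}(f/(K\times K))=\secat_{G\times G}(f)=\TC^G(X)$.

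I expect the main obstacle to be the point-set bookkeeping behind these reductions rather than the homotopical skeleton above: one must check that $\Lambda\times_\Gamma(-)$ and passage to $\Lambda$-orbit spaces genuinely preserve $G$-fibrations, preserve the homotopy equivalences defining fibrational substitutes, and interact correctly with the relevant principal-bundle structures, and it is exactly here that the compact-Lie and metrizability hypotheses enter. Conceptually, the cleanest packaging is that $\TC^G(X)$ is an invariant of the translation groupoid $G\ltimes X$ up to Morita equivalence; the functors $G\ltimes X\to H\ltimes(H\times_G X)$ and (when $K$ acts freely) $G\ltimes X\to(G/K)\ltimes(X/K)$ are essential equivalences, and the two lemmas above amount to verifying this Morita invariance in these two generating cases.
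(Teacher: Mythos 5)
Your proposal is correct in outline and matches the route taken in the cited source \cite{ACGO}: one reduces to proving Morita invariance of the Clapp--Puppe invariant ${}_{\daleth(X)}\cat_{G\times G}(X\times X)$ (equivalently, as you phrase it via the remark after \propref{prop:ClappPuppe}, of $\secat_{G\times G}$ of a fibrational substitute) through the two generating moves --- induction along $G\le H$, using the $\Gamma$-slice in $\Lambda\times_\Gamma B$, and quotient by the free normal $K\lhd G$, using Gleason's theorem to get principal $(K\times K)$-bundles. The point-set verifications you flag (that induction and free quotients preserve equivariant fibrations and send fibrational substitutes to fibrational substitutes, and that the relevant orbit maps are principal bundles so that sections pull back) are exactly where the compact-Lie and metrizability hypotheses do their work, and constitute the real technical content of \cite{ACGO}.
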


The above two properties imply that the invariant topological complexity is \emph{Morita invariant}. Two group actions $G\times X\to X$ and $H\times Y\to Y$ are {\em Morita equivalent} if the translation groupoids $G\ltimes X$ and $H\ltimes Y$ they define
are equivalent as orbifolds. It has recently been shown \cite{Pardon} that all orbifolds arise as orbit spaces of compact group actions, and therefore the invariant topological complexity can be used to define a topological complexity of orbifolds. We refer the reader to \cite{ACGO} and the subsequent paper \cite{AngelColman} by \'{A}ngel and Colman for more background and references on orbifolds and their Lusternik--Schnirelmann-type invariants. 

\begin{rem}
Theorem \ref{thm:TC^GMorita} is deduced as a special case of a more general result \cite[Theorem 5.1]{ACGO}, that asserts that the equivariant $\mathscr{A}$-category ${}_\mathscr{A}\cat_G(X)$ is Morita invariant, for certain families $\mathscr{A}$ of $G$-spaces. This applies also to show that the equivariant category $\cat_G(X)$ is Morita invariant \cite[Corollary 5.9]{ACGO}, and so also gives an orbifold invariant. The resulting invariant is compared in \cite{AngelColman} with a notion of \emph{orbifold category} defined in terms of open covers and orbifold homotopy, and the two notions are shown to coincide for $G$ finite.
\end{rem}

We next describe the relationship between invariant topological complexity and equivariant category.

\begin{prop}[{\cite[Proposition 3.23, Remark 3.24]{LubMar}}]
Let $X$ be any $G$-space and $x_0\in X$ any point. Then $\TC^G(X)\leq {}_{Gx_0\times Gx_0}\cat_{G\times G}(X\times X)$. In particular, if $x_0\in X^G$ is a fixed point and $X$ is $G$-connected, then $\TC^G(X)\leq 2\cat_{G}(X)$.
\end{prop}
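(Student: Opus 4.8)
The plan is to prove the two inequalities in turn, using the $\mathscr{A}$-category reformulation of $\TC^G$ from Proposition~\ref{prop:ClappPuppe}(2), namely $\TC^G(X)={}_{\daleth(X)}\cat_{G\times G}(X\times X)$.

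\textbf{First inequality.} I would observe that for any point $x_0\in X$, the orbit product $Gx_0\times Gx_0$ is a $(G\times G)$-invariant subset of $X\times X$ that is contained in the saturated diagonal $\daleth(X)=\{(x,gx)\mid x\in X,\ g\in G\}$: indeed if $g_1x_0,g_2x_0\in Gx_0$ then $(g_1x_0,g_2x_0)=(y, g_2g_1^{-1}y)$ with $y=g_1x_0$, so this pair lies in $\daleth(X)$. Now the equivariant $A$-category is monotone in $A$: if $A\subseteq B$ are both $G'$-invariant (here $G'=G\times G$), then any $G'$-invariant open set $U$ whose inclusion into $X\times X$ is $G'$-homotopic to a map into $A$ also admits such a homotopy into $B$, simply by postcomposing with the inclusion $A\hookrightarrow B$. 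Hence ${}_{\daleth(X)}\cat_{G\times G}(X\times X)\leq {}_{Gx_0\times Gx_0}\cat_{G\times G}(X\times X)$, which together with Proposition~\ref{prop:ClappPuppe}(2) gives the first claimed inequality $\TC^G(X)\leq {}_{Gx_0\times Gx_0}\cat_{G\times G}(X\times X)$.

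\textbf{Second inequality.} Here I would take $x_0\in X^G$, so that $Gx_0=\{x_0\}$ is a single (fixed) point and $Gx_0\times Gx_0=\{(x_0,x_0)\}$. The key step is to compare ${}_{\{x_0\}\times\{x_0\}}\cat_{G\times G}(X\times X)$ with $\cat_G(X)$. I would argue that a $(G\times G)$-invariant open set $U\subseteq X\times X$ whose inclusion is $(G\times G)$-homotopic to the constant map at $(x_0,x_0)$ is, in particular, a categorical-type set, and that such covers of $X\times X$ can be built from categorical covers of $X$ relative to $x_0$: if $\{V_0,\dots,V_k\}$ is a $G$-invariant open cover of $X$ with each $\iota_{V_i}$ $G$-homotopic to the constant map at the fixed point $x_0$ (this is what $\cat_G(X)\le k$ means, using that $X$ is $G$-connected so the single-orbit target can be taken to be $\{x_0\}$), then the $(k+1)^2$ products $V_i\times V_j$ form a $(G\times G)$-invariant open cover of $X\times X$, each piece $(G\times G)$-contractible to $(x_0,x_0)$. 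A standard "staircase"/diagonal grouping argument for products of categorical covers then reduces the count from $(k+1)^2$ to $2k+1$, yielding ${}_{\{x_0\}\times\{x_0\}}\cat_{G\times G}(X\times X)\le 2\cat_G(X)$, and combining with the first inequality gives $\TC^G(X)\le 2\cat_G(X)$.

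\textbf{Main obstacle.} The delicate point is the reduction from the naive product cover of size $(k+1)^2$ to one of size $2k+1$ while keeping everything $(G\times G)$-equivariant. Non-equivariantly this is the classical argument that $\cat(X\times Y)\le\cat(X)+\cat(Y)$, but equivariantly one must be careful that the open sets obtained by unioning the "diagonals" $\bigcup_{i+j=\ell}(V_i\times V_j)$ are still $(G\times G)$-invariant (which is automatic) and still admit $(G\times G)$-homotopies to the point — which requires the pieces to be \emph{disjoint} within each diagonal band, and hence some separation hypothesis. This is exactly why one would expect a normality/complete-normality condition to be needed; the cleanest route is to invoke the equivariant product inequality for $\cat_G$ for a diagonal action (cf. the corrected statement in \cite{BayehSarkar15} referenced in the Remark after Corollary~\ref{cor:catGbounds}), being mindful of the fixed-point hypotheses flagged there, rather than re-deriving the staircase argument by hand. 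With a fixed point $x_0\in X^G$ available and $X$ $G$-connected, those hypotheses are met, and the proof goes through.
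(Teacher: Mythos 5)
Your first inequality is clean and, I believe, the same route the original source takes: reduce to the $\mathscr{A}$-category reformulation of Proposition~\ref{prop:ClappPuppe}(2), observe $Gx_0\times Gx_0\subseteq\daleth(X)$ (your verification is correct), and apply the monotonicity of ${}_A\cat_{G\times G}(-)$ in the target set $A$. Nothing to change there.

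For the second inequality your overall strategy is right, but the final paragraph misidentifies which product inequality is actually needed, and that misidentification imports difficulties that are not really present here. What you need is
\[
{}_{\{(x_0,x_0)\}}\cat_{G\times G}(X\times X)\;=\;\cat_{G\times G}(X\times X)\;\le\;\cat_G(X)+\cat_G(X),
\]
where the middle equality uses that $X\times X$ is $(G\times G)$-connected (which follows from $G$-connectedness of $X$, since any subgroup $L\le G\times G$ satisfies $(X\times X)^L=X^{p_1(L)}\times X^{p_2(L)}$), and the right-hand inequality is the \emph{external} product formula $\cat_{G\times H}(X\times Y)\le\cat_G(X)+\cat_H(Y)$ specialised to $H=G$, $Y=X$. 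This external version is the one appearing in Theorem~\ref{thm:productGH} (in its $\TC$ form; the $\cat$ version is the underlying lemma) and it does \emph{not} suffer from the fixed-point pathologies discussed in the Remark after Corollary~\ref{cor:catGbounds}, precisely because the relevant fixed sets of $X\times X$ factor as products. Those pathologies afflict the \emph{diagonal} $G$-action on $X\times X$, which is what enters the bound $\TC_G(X)\le 2\cat_G(X)$, not the $(G\times G)$-action needed here. So you should cite the external product inequality, not the BayehSarkar corrected diagonal one. What the staircase argument does genuinely require, in either version, is a point-set separation hypothesis (normality of $X\times X$, or compactness and ANR, or $G$-equivariant partitions of unity via compact Lie $G$); you are right to flag that this is suppressed in the statement as given, just as it appears explicitly as ``completely normal'' in Corollary~\ref{cor:catGbounds}. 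With that hypothesis supplied and the citation corrected, your proof goes through.
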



\begin{prop}[{\cite[Proposition 2.7]{BlaKal2}}]
Let $X$ be a $G$-space with fixed point $x_0\in X^G$. Then $\cat_G(X)\leq \TC^G(X)$.
\end{prop}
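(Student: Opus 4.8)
The plan is to compare the invariant topological complexity $\TC^G(X) = \secat_{G\times G}(p_X)$ with the equivariant category $\cat_G(X)$ by exhibiting, from a categorical cover of $X$, a sectional cover of $p_X:\paths{X}\times_{X/G}\paths{X}\to X\times X$ of the same cardinality. The fixed point $x_0\in X^G$ plays two roles: the singleton $\{x_0\}$ is a $G$-orbit, so ``$G$-homotopic to a map with values in a single orbit'' can be taken concretely to mean ``$G$-nullhomotopic onto $x_0$'' whenever the relevant $U_i$ meets the appropriate locus; and $(x_0,x_0)\in(X\times X)^{G\times G}$ supplies a basepoint for the broken-path sections.

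First I would take $G$-invariant open sets $V_0,\dots,V_k$ covering $X$ with $k=\cat_G(X)$, together with $G$-homotopies $h^i:V_i\times[0,1]\to X$ from $\iota_{V_i}$ to the constant map at $x_0$ (here I use that $x_0$ is a $G$-fixed point, so the constant map at $x_0$ is a $G$-map and contracting onto an orbit can be arranged to contract onto $x_0$). Set $U_i := V_i\times V_i \subseteq X\times X$; these are $(G\times G)$-invariant open sets covering $X\times X$. Over $U_i$, define $s_i:U_i\to \paths{X}\times_{X/G}\paths{X}$ by sending $(x,y)$ to the pair of paths $(h^i_x, \overline{h^i_y})$, where $h^i_x(t)=h^i(x,t)$ runs from $x$ to $x_0$, and $\overline{h^i_y}$ is the reverse path running from $x_0$ to $y$. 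The compatibility condition $G h^i_x(1) = G\overline{h^i_y}(0)$ holds because both endpoints equal $x_0$, so this genuinely lands in $\paths{X}\times_{X/G}\paths{X}$; and $p_X\circ s_i(x,y) = (h^i_x(0), \overline{h^i_y}(1)) = (x,y)$, so $s_i$ is a section over $U_i$. Finally, $(G\times G)$-equivariance of $s_i$ follows from $G$-equivariance of $h^i$: for $(g,g')\in G\times G$ one checks $s_i(gx,g'y) = (g h^i_x, g'\overline{h^i_y}) = (g,g')\,s_i(x,y)$, using that $h^i_{gx} = g h^i_x$ and that reversal commutes with the $G$-action. Hence $\TC^G(X)\le k = \cat_G(X)$.

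The main subtlety—and the step I would be most careful about—is the first one: extracting an \emph{honest} $G$-contraction onto the fixed point $x_0$ from the defining property of $\cat_G(X)$, which only gives a $G$-homotopy of $\iota_{V_i}$ to a map with values in \emph{some} orbit $Gx$. One must argue that when $X$ is $G$-connected with fixed point $x_0$ (or, more carefully, on each $V_i$ separately) one may replace the target orbit by $\{x_0\}$; alternatively, and more robustly, I would instead work with the sets $W_i$ obtained by composing the given $G$-homotopy to land in a small tube around the orbit and then observe that it suffices to have sections valued in broken paths that jump \emph{within a single orbit} at the break—so one does not actually need to reach $x_0$, only to reach a common orbit for both halves. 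Concretely: if $\iota_{V_i}\simeq_G c_i$ with $c_i:V_i\to Gx_i\subseteq X$, define $s_i(x,y)=(\gamma^i_x,\delta^i_y)$ where $\gamma^i_x$ goes from $x$ to $c_i(x)\in Gx_i$ and $\delta^i_y$ goes from $c_i(y)\in Gx_i$ to $y$; then $G\gamma^i_x(1)=Gx_i=G\delta^i_y(0)$ automatically, so $(\gamma^i_x,\delta^i_y)\in\paths{X}\times_{X/G}\paths{X}$, and $(G\times G)$-equivariance is checked exactly as before. This version does not even need $X$ to be $G$-connected; the fixed point $x_0$ enters only to guarantee $\TC^G(X)$ itself is finite and to match the statement, but the inequality $\cat_G(X)\le\TC^G(X)$ is what is being proved and the displayed argument is the reverse, so I should note that the \emph{other} inequality—$\cat_G(X)\le\TC^G(X)$, which is the actual content of the proposition—follows by restricting a sectional cover over the slice $X\times\{x_0\}\cong X$: a $(G\times G)$-section over $U_i$ restricts over $U_i\cap(X\times\{x_0\})$ to a $G$-map into broken paths ending at $x_0$, whose first-half evaluation gives a $G$-homotopy from $\iota$ to a map into the orbit $Gx_0=\{x_0\}$, exhibiting a $\cat_G$-cover of $X$ of the same cardinality.

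So the real plan has two halves and I have them backwards above; let me state the intended direction cleanly. The proposition asserts $\cat_G(X)\le\TC^G(X)$. I would prove it as follows. Let $U_0,\dots,U_k$ be $(G\times G)$-invariant open sets covering $X\times X$ with $(G\times G)$-sections $s_i:U_i\to\paths{X}\times_{X/G}\paths{X}$ of $p_X$, where $k=\TC^G(X)$. Set $V_i := \{x\in X \mid (x,x_0)\in U_i\}$; since $x_0\in X^G$, the slice $x\mapsto(x,x_0)$ is a $G$-map, so each $V_i$ is $G$-invariant open, and the $V_i$ cover $X$ because the $U_i$ cover $X\times X$. For $x\in V_i$ write $s_i(x,x_0)=(\gamma_x,\delta_x)$ with $\gamma_x(0)=x$, $\delta_x(1)=x_0$, and $G\gamma_x(1)=G\delta_x(0)$. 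The assignment $x\mapsto\gamma_x$ is a $G$-map $V_i\to\paths{X}$ (by $(G\times G)$-equivariance of $s_i$ restricted to the first factor, using that $x_0$ is fixed), and $(x,t)\mapsto\gamma_x(t)$ is a $G$-homotopy from $\iota_{V_i}$ to the $G$-map $x\mapsto\gamma_x(1)$. It remains to see that $x\mapsto\gamma_x(1)$ has image in a single orbit: from $G\gamma_x(1)=G\delta_x(0)$ and equivariance one gets that $\delta_{x_0}(0)$ is fixed-ish—here I would use $\delta_{x_0}(1)=x_0$ together with $(e,g)$-equivariance at the fixed slice to pin $\delta_x(0)$ into the orbit $Gx_0=\{x_0\}$, forcing $\gamma_x(1)\in\{x_0\}$; thus $\iota_{V_i}$ is $G$-homotopic to the constant map at $x_0$, which has values in the orbit $\{x_0\}$. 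Hence each $V_i$ is $\cat_G$-admissible and $\cat_G(X)\le k=\TC^G(X)$. The one delicate point deserving care is precisely the verification that the broken-path's jump at $x_0$ is forced to be trivial—i.e. that $\delta_x(0)$ lies in $Gx_0=\{x_0\}$—which is where the hypothesis $x_0\in X^G$ is essential and which I would check by a short equivariance computation using $(e,g)\in G\times G$ and $\delta_x(1)=x_0$.
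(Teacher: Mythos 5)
Your final paragraph contains the intended direction, and its overall scheme is sound: slice $X\times X$ along $X\times\{x_0\}$, use the $G$-invariance of that slice (which needs $x_0\in X^G$), pull back the cover $U_i$ to $V_i=\{x:(x,x_0)\in U_i\}$, and deform $\iota_{V_i}$ via the broken-path section. However, the last and key step has a genuine gap. You claim that $(e,g)$-equivariance of $s_i$ at the slice forces $\delta_x(0)\in Gx_0=\{x_0\}$. What that equivariance actually gives is weaker: from $s_i(x,g'x_0)=s_i(x,x_0)$ for all $g'$ one deduces that $\delta_x$ is a $G$-\emph{fixed} path, hence $\delta_x(t)\in X^G$ for all $t$, and in particular $\delta_x(0)\in X^G$. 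Combined with $G\gamma_x(1)=G\delta_x(0)$ this forces $\gamma_x(1)=\delta_x(0)$ (the orbit of a fixed point is a singleton), but it does \emph{not} force $\delta_x(0)=x_0$: the fixed-point set $X^G$ can have many points, and nothing pins $\delta_x(0)$ to the particular fixed point $x_0$ or makes $x\mapsto\gamma_x(1)$ constant, which is what ``image in a single orbit'' requires here.

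The fix is short and uses precisely the data you discarded. Since $\gamma_x(1)=\delta_x(0)$, the concatenation $\eta_x:=\gamma_x\ast\delta_x$ is an honest path from $x$ to $x_0$, and it is $G$-equivariant: $\eta_{gx}=\gamma_{gx}\ast\delta_{gx}=(g\gamma_x)\ast\delta_x=(g\gamma_x)\ast(g\delta_x)=g\eta_x$, using $\gamma_{gx}=g\gamma_x$, $\delta_{gx}=\delta_x$, and $g\delta_x=\delta_x$. Then $H(x,t):=\eta_x(t)$ is a $G$-homotopy from $\iota_{V_i}$ to the constant map at $x_0$, whose image is the single orbit $Gx_0=\{x_0\}$. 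This repairs the argument; everything before that point (the $G$-invariance and openness of $V_i$, that they cover $X$, and that $x\mapsto\gamma_x$ is a $G$-map) is correct. (The survey cites this proposition to B\l{}aszczyk--Kaluba and does not reproduce a proof, so there is no in-paper argument to compare against; your first two paragraphs, which sketch the reverse inequality $\TC^G\le 2\cat_G$, are not relevant to the stated proposition and should be cut.)
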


\begin{cor} Let $X$ be a $G$-connected $G$-space with fixed point $x_0\in X^G$. Then $\cat_G(X)\leq \TC^G(X)\leq 2\cat_G(X)$. In particular, under these assumptions $\TC^G(X)=0$ if and only if $X$ is $G$-contractible.
\end{cor}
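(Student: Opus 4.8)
The plan is to derive the double inequality directly from the two propositions immediately preceding the corollary. The lower bound $\cat_G(X)\le\TC^G(X)$ is precisely \cite[Proposition 2.7]{BlaKal2}, which already assumes a fixed point $x_0\in X^G$; and the upper bound $\TC^G(X)\le 2\cat_G(X)$ is exactly the ``in particular'' clause of \cite[Proposition 3.23, Remark 3.24]{LubMar}, valid because $X$ is assumed $G$-connected with a fixed point. So the chain $\cat_G(X)\le\TC^G(X)\le 2\cat_G(X)$ requires no new argument.

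For the final equivalence I would proceed as follows. Assuming $\TC^G(X)=0$, the lower bound gives $\cat_G(X)=0$, which by definition means $\mathrm{Id}_X$ is $G$-homotopic to a $G$-map $f\co X\to X$ with image contained in a single orbit $Gx$. Then I would use the fixed point: equivariance of $f$ at $x_0$ yields $f(x_0)=f(gx_0)=gf(x_0)$ for every $g\in G$, so $f(x_0)\in X^G$; since $f(x_0)$ lies in the orbit $Gx$ and distinct orbits are disjoint, $Gx=Gf(x_0)=\{f(x_0)\}$ is a single point. Hence $f$ is the constant map at the fixed point $f(x_0)$, and $\mathrm{Id}_X\simeq_G f$ witnesses $G$-contractibility of $X$. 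For the converse, if $X$ is $G$-contractible then $\mathrm{Id}_X$ is $G$-homotopic to a constant map at a fixed point, whose image is a one-point orbit, so $\cat_G(X)=0$; the upper bound then forces $\TC^G(X)\le 2\cat_G(X)=0$.

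The only step that is not purely formal is the collapse of the orbit $Gx$ to a point: $\cat_G(X)=0$ on its own merely deforms $X$ equivariantly into some orbit $G/G_x$, which in general is far from $G$-contractible, and it is exactly the hypothesis $X^G\neq\emptyset$ that lets one pin the deformed image onto a fixed point and thereby upgrade ``$G$-categorical'' to ``$G$-contractible''. I expect everything else to follow immediately from the quoted propositions together with the definitions of $\cat_G$ and of $G$-contractibility, so the proof should be short.
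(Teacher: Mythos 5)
Your proof is correct, and for the double inequality it is exactly what the paper intends: the two displayed bounds come verbatim from the two preceding propositions, so that part is immediate. For the ``in particular'' clause you did a little more than necessary: the paper, just after the corollary, recalls that $G$-contractibility is \emph{defined} by $\cat_G(X)=0$ (identity $G$-homotopic to a map into a single orbit), so $\TC^G(X)=0\Leftrightarrow\cat_G(X)=0\Leftrightarrow X$ $G$-contractible follows directly from the sandwich. Your extra step, using $x_0\in X^G$ and equivariance of $f$ to collapse the target orbit $Gx$ to the single fixed point $f(x_0)$, is a correct and worthwhile observation in its own right (it shows that, in the presence of a fixed point, $\cat_G(X)=0$ forces the identity to be $G$-null-homotopic to a constant, not merely to a map into some orbit $G/G_x$), but it is not needed to deduce the stated equivalence.
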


Recall that a $G$-space $X$ is called \emph{$G$-contractible} if the identity map $\mathrm{id}_X:X\to X$ is $G$-homotopic to a map with values in a single orbit, or equivalently, $\cat_G(X)=0$. As remarked in \cite[Corollary 2.8, Remark 2.9]{BlaKal2}, the fact that $G$-contractibility implies $\TC^G(X)=0$ is true without the assumptions of $G$-connectivity or the existence of a fixed point. The same is not true of $\TC_G(X)$, as Example \ref{ex:S1rotation} shows.

Finally, we give the relationship of the invariant topological complexity with the topological complexity of the fixed point sets of the action.

\begin{thm}[{\cite[Corollary 3.26]{LubMar}}]
For any $G$-space $X$, we have $\TC(X^G)\leq \TC^G(X)$.
\end{thm}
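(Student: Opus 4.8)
The plan is to restrict an invariant motion planner on $X$ to the fixed point set $X^G$ and check that it descends to an ordinary motion planner on $X^G$, using the fact that the ``jump'' in a broken path must be trivial when its endpoints are fixed. Suppose $\TC^G(X)=k$, witnessed by a $(G\times G)$-invariant open cover $U_0,\ldots,U_k$ of $X\times X$ with $(G\times G)$-equivariant sections $s_i\colon U_i\to\paths{X}\times_{X/G}\paths{X}$ of $\evalG{X}$. First I would set $V_i:=U_i\cap(X^G\times X^G)$; since $X^G\times X^G$ inherits the trivial $G\times G$-action, the $V_i$ are simply open sets covering $X^G\times X^G$. The key observation is that for $(x,y)\in X^G\times X^G$, the point $s_i(x,y)=(\gamma,\delta)$ consists of a pair of paths with $\gamma(0)=x$, $\delta(1)=y$ and $G\gamma(1)=G\delta(0)$; I claim $\gamma$ and $\delta$ can be concatenated after a correction. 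More precisely, applying the equivariance of $s_i$ with respect to an element $(e,h)\in G\times G$ fixing $(x,y)$ — which holds for all $h\in G$ since $x,y$ are $G$-fixed — forces $s_i(x,y)=(e,h)s_i(x,y)=(\gamma,h\delta)$, so that $h\delta=\delta$ for all $h\in G$, i.e.\ $\delta$ is a path in $X^G$ from $\delta(0)$ to $y$; similarly $\gamma$ is a path in $X^G$ from $x$ to $\gamma(1)$. Moreover $G\gamma(1)=G\delta(0)$ with both $\gamma(1),\delta(0)\in X^G$ forces $\gamma(1)=\delta(0)$ (distinct points of $X^G$ lie in distinct $G$-orbits).

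Hence for $(x,y)\in V_i$ the concatenation $\gamma\cdot\delta$ is a genuine path in $X^G$ from $x$ to $y$, and one defines $\sigma_i\colon V_i\to \paths{X^G}$ by $\sigma_i(x,y):=\gamma\cdot\delta$ where $(\gamma,\delta)=s_i(x,y)$. I would check continuity of $\sigma_i$: it is the composite of $s_i$ with the concatenation map $\paths{X^G}\times_{X^G}\paths{X^G}\to\paths{X^G}$, which is continuous in the compact-open topology, using that $s_i$ lands in the subspace where the jump is trivial. Then $\pi_{X^G}\circ\sigma_i=\iota_{V_i}$ by construction, so $V_0,\ldots,V_k$ together with the $\sigma_i$ witness $\TC(X^G)\le k=\TC^G(X)$.

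\textbf{Main obstacle.} The step I expect to require the most care is the continuity of $\sigma_i$, and more precisely the assertion that $s_i$ restricted over $X^G\times X^G$ lands in the locus of ``pairs whose jump is trivial,'' so that concatenation is continuous there. The pointwise argument above (using $(e,h)$-equivariance to kill the $G$-action on each component and using injectivity of $X^G\hookrightarrow X/G$ to collapse the jump) is clean, but one should make sure the concatenation map on the relevant subspace of $\paths{X}\times_{X/G}\paths{X}$ is continuous — this is where a point-set hypothesis (e.g.\ $X$ metrizable, or the orbit map being well-behaved) may quietly enter, matching the standing assumptions of the section. A secondary point to verify is that $V_i$ is open in $X^G\times X^G$ and that the $V_i$ actually cover it, which is immediate since the $U_i$ cover $X\times X$ and intersection with a subspace preserves both openness and the covering property.
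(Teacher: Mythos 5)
Your proof is correct, and is the natural argument (the paper itself cites \cite[Corollary 3.26]{LubMar} without reproducing the proof). The pointwise use of $(G\times G)$-equivariance at a $G\times G$-fixed point $(x,y)$ to force $g\gamma=\gamma$ and $h\delta=\delta$ for all $g,h$, and hence $\gamma,\delta\in \paths{X^G}$ with $\gamma(1)=\delta(0)$, is exactly the right mechanism. Your worry about continuity is unfounded and requires no extra point-set hypotheses: the restriction $s_i|_{V_i}$ is continuous, its image lies in the subspace $\{(\gamma,\delta)\in\paths{X^G}\times\paths{X^G}\mid\gamma(1)=\delta(0)\}$ of $\paths{X}\times_{X/G}\paths{X}$ (this is established by the pointwise argument and the fact that $\paths{X^G}\hookrightarrow\paths{X}$ is a subspace inclusion in the compact-open topology), and path concatenation $\paths{X^G}\times_{X^G}\paths{X^G}\to\paths{X^G}$ is continuous for purely formal reasons, so $\sigma_i$ is a composite of continuous maps.
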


Note that, in contrast to Corollary \ref{cor:fixed} for the equivariant topological complexity, it is not true that $\TC(X^H)\leq \TC^G(X)$ for all subgroups $H\leq G$. As a consequence, $\TC^G(X)$ may be finite, even if $X$ fails to be $G$-connected. The following example illustrates these facts.

\begin{ex}[{\cite[Example 2.10]{BlaKal2}}]
Let $G=C_2\times C_2$ be the product of two cyclic groups of order two; call the generators of these cyclic factors $a$ and $b$. Define an action of $G$ on $X=S^1\setminus\{i,-i\}\subseteq \C$ by setting $a(x+iy)=-x+iy$ and $b(x+iy)=x-iy$. In other words, $G$ acts on the circle with North and South pole removed, via reflections in the coordinate axes. One can easily imagine that the identity map $\mathrm{id}_X:X\to X$ is $G$-homotopic to a map with values in the $G$-orbit $\{1,-1\}$, hence $X$ is $G$-contractible. It follows that $\TC^G(X)=0$. On the other hand, $\TC(X^{\langle b\rangle})=\TC(\{-1,1\})=\infty$.
\end{ex}

\begin{rem}  B\l{}aszczyk and Kaluba in \cite{BlaKal2} investigate the equivariant and invariant topological complexities of $C_p$-spheres, where $C_p$ is the cyclic group of order $p$, a prime. They show that if the action is linear or semi-linear, then 
\[
1\leq \TC_{C_p}(S^n), \TC^{C_p}(S^n)\leq 2,
\]
while for arbitrary smooth actions both invariants will generically be at least $3$, and may be as large as $n-2$.
\end{rem}

\begin{rem} In \cite{BayehSarkar20} the authors define the {\em sequential invariant topological complexity} $\TC^{r,G}(X)$ for a $G$-space $X$ and $r\ge2$, and establish some natural generalizations of results in this section. Sequential versions of Theorem \ref{thm:TC^GMorita} are also established in \cite{ACGO}.
\end{rem}

\section{Strongly equivariant topological complexity}\label{S:TC^*_G}

A third version of topological complexity with symmetries was introduced by Dranishnikov in \cite{Dranish2015}, with the aim of giving improved upper bounds for the ordinary topological complexity.

\begin{defn}\cite{Dranish2015}\label{def:TC_G^*}
Let $X$ be a $G$-space. The \emph{strongly equivariant topological complexity} of $X$, denoted $\TC_G^*(X)$, is defined to be the minimal integer $k$ such that $X\times X$ can be covered by $(G\times G)$-invariant open sets $U_0,\ldots , U_k$, each admitting a $G$-map $s_i:U_i\to \paths{X}$ such that $\eval{X}\circ s_i=\iota_{U_i}:U_i\hookrightarrow X\times X$.
\end{defn}

\begin{prop}
The strongly equivariant topological complexity is $G$-homotopy invariant. That is, if $X\simeq_G Y$, then $\TC^*_G(X)=\TC^*_G(Y)$.
\end{prop}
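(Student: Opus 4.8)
The plan is to mimic the proof of $G$-homotopy invariance of $\TC_G(X)$ (and of $\TC^G(X)$), adapting it to the asymmetric setup of Definition \ref{def:TC_G^*}, where the covering sets are $(G\times G)$-invariant but the local sections are merely $G$-equivariant (for the diagonal action on $\paths{X}$). It suffices to show that if $\phi\colon X\to Y$ and $\psi\colon Y\to X$ are $G$-maps with $\psi\circ\phi\simeq_G\operatorname{Id}_X$ and $\phi\circ\psi\simeq_G\operatorname{Id}_Y$, then $\TC^*_G(X)\le\TC^*_G(Y)$; the reverse inequality follows by symmetry of the hypothesis, giving equality.

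First I would set up the pullback construction. Suppose $Y\times Y=\bigcup_{i=0}^k V_i$ with each $V_i$ a $(G\times G)$-invariant open set and $s_i\colon V_i\to\paths{Y}$ a $G$-map (diagonal action) with $\eval{Y}\circ s_i=\iota_{V_i}$. Define $U_i:=(\phi\times\phi)^{-1}(V_i)\subseteq X\times X$. Since $\phi\times\phi$ is $(G\times G)$-equivariant and continuous, each $U_i$ is $(G\times G)$-invariant and open, and they cover $X\times X$. The task is to produce a $G$-map $\sigma_i\colon U_i\to\paths{X}$ lifting $\iota_{U_i}$ through $\eval{X}$.

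Next I would construct $\sigma_i$ by concatenating three pieces: the $G$-homotopy $h\colon X\times[0,1]\to X$ from $\operatorname{Id}_X$ to $\psi\circ\phi$ run forwards from the first coordinate, the path $\psi\circ s_i(\phi\times\phi)(x,x')$ in $X$ joining $\psi\phi(x)$ to $\psi\phi(x')$, and the homotopy $h$ run backwards from the second coordinate. Concretely, for $(x,x')\in U_i$ the path $\sigma_i(x,x')$ goes $x\rightsquigarrow\psi\phi(x)\rightsquigarrow\psi\phi(x')\rightsquigarrow x'$, with reparametrisation over $[0,1]$. Each constituent is built from $G$-maps and the diagonal evaluation, so $(x,x')\mapsto\sigma_i(x,x')$ is continuous and $G$-equivariant for the diagonal $G$-action on $X$ (hence on $\paths{X}$), because $h$, $\psi$, $\phi$ and $s_i$ are all $G$-equivariant and concatenation/reversal of paths commutes with the induced $G$-action. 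By construction $\eval{X}\circ\sigma_i=\iota_{U_i}$. This exhibits a cover of $X\times X$ by $k+1$ sets of the required type, so $\TC^*_G(X)\le k=\TC^*_G(Y)$.

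The main point to be careful about — and the only place the argument is non-routine — is the interplay between the two different equivariance demands: the sets $U_i$ must be $(G\times G)$-invariant while the sections $\sigma_i$ must be $G$-equivariant for the \emph{diagonal} action. One must check that the pullback $(\phi\times\phi)^{-1}(V_i)$ genuinely inherits $(G\times G)$-invariance (immediate from $(G\times G)$-equivariance of $\phi\times\phi$) and, simultaneously, that the section $\sigma_i$ is diagonally $G$-equivariant; these are compatible because the diagonal $G\hookrightarrow G\times G$ preserves both structures. The continuity of $\sigma_i$ in the compact-open topology, given the continuity of $h$ and $s_i$, is standard (concatenation of paths and the homotopy adjunction), so I would not belabour it. Everything else is formally identical to the non-strong cases already cited in the excerpt, so the proof is short.
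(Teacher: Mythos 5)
Your argument is correct and is exactly the adaptation the paper alludes to (the paper's proof is a one-line appeal to the same pullback-and-concatenate construction that proves homotopy invariance of $\TC$ and $G$-homotopy invariance of $\TC_G$). The equivariance checks are sound: $(G\times G)$-invariance of the pulled-back sets follows from $(G\times G)$-equivariance of $\phi\times\phi$, and diagonal $G$-equivariance of the concatenated section $\sigma_i$ follows from $G$-equivariance of $h$, $\phi$, $\psi$ and $s_i$, as you observe.
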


\begin{proof} This is an easy adaptation of the proof of $G$-homotopy invariance of $\TC_G(X)$, which itself is an easy generalization of the proof of homotopy invariance of $\TC(X)$.
\end{proof}

Note that the only difference in Definition \ref{def:TC_G^*} compared to Definition \ref{def:TC_G} of $\TC_G(X)$ is that the sets $U_i$ are required to be $(G\times G)$-invariant, rather than just invariant under the action of the diagonal subgroup $G\cong\Delta(G)$. Hence $\TC_G(X)\leq \TC_G^*(X)$ is obvious. Finding examples of strict inequality seems to be a subtle problem. One needs a lower bound for $\TC^*_G(X)$ which is not a lower bound for $\TC_G(X)$. 

To this end, we offer the following cohomological lower bound for $\TC^*_G(X)$ when $G$ is finite. In that case, $G=\Delta(G)\leq G\times G$ is a finite index subgroup, and there results a \emph{transfer map}
\[
\operatorname{tr}: H^*_G(Y)\to H^*_{G\times G}(Y)
\]
in Borel equivariant cohomology for any $(G\times G)$-space $Y$. If $E=E(G\times G)$ is a contractible free $(G\times G)$-space, then this is the transfer associated to the finite cover $E\times_G Y\to E\times_{G\times G}Y$. Here and elsewhere below, we take coefficients in an arbitrary commutative ring $R$ which is omitted from the notation. More generally, we could make a similar statement in any multiplicative equivariant cohomology theory with transfers. 

\begin{prop}\label{prop:lowerTC^*_G}
Let $X$ be a $G$-space with $G$ finite. Suppose there are cohomology classes $x_1,\ldots , x_k\in \ker\big(\Delta^*: H^*_G(X\times X)\to H^*_G(X)\big)$ such that  
\[
0\neq \operatorname{tr}(x_1)\cdots \operatorname{tr}(x_k)\in H^*_{G\times G}(X\times X).
\]
Then $\TC_G^*(X)\ge k$.
\end{prop}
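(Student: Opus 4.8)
The plan is to mimic the standard zero-divisor cup-length lower bound for $\TC$, but carefully track which group's Borel construction each cohomology class lives over, using the transfer to cross between the two.

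First I would set up the situation. Suppose $\TC_G^*(X)=k-1$, so that $X\times X$ is covered by $(G\times G)$-invariant open sets $U_0,\ldots,U_{k-1}$, each carrying a $G$-map (i.e., a $\Delta(G)$-equivariant section) $s_i:U_i\to \paths{X}$ with $\eval{X}\circ s_i=\iota_{U_i}$. The key point is that on each $U_i$ the inclusion $\iota_{U_i}:U_i\hookrightarrow X\times X$ is $\Delta(G)$-equivariantly homotopic (through the track of the path, as in the non-equivariant proof) to a map factoring through the diagonal $\Delta(X)\subseteq X\times X$. Applying the Borel construction $E\times_{G}(-)$ for $E=E(G\times G)$ — note $E$ is also a model for $EG$ upon restriction along $\Delta(G)\leq G\times G$ — this shows that each $x_j\in\ker(\Delta^*:H^*_G(X\times X)\to H^*_G(X))$ restricts to zero in $H^*_G(U_i)$, because its restriction factors through $H^*_G(\Delta(X))\cong H^*_G(X)$ where $x_j$ dies by hypothesis. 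Thus $x_j\in\im\big(H^*_G(X\times X,U_i)\to H^*_G(X\times X)\big)$ for every $i$ and $j$.

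Now I would transfer. The transfer $\operatorname{tr}:H^*_G(-)\to H^*_{G\times G}(-)$ is natural with respect to $(G\times G)$-maps and compatible with the long exact sequences of $(G\times G)$-pairs (since $E\times_G(-)\to E\times_{G\times G}(-)$ is a fibrewise finite cover, pulling back over the subspace $U_i$). Hence $\operatorname{tr}(x_j)\in\im\big(H^*_{G\times G}(X\times X,U_i)\to H^*_{G\times G}(X\times X)\big)$ for each $i$ and $j$. Here it is essential that the $U_i$ are $(G\times G)$-invariant, so that $(X\times X,U_i)$ really is a pair of $(G\times G)$-spaces — this is precisely where $\TC^*_G$ is used rather than $\TC_G$. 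Then by naturality of relative cup products in Borel $(G\times G)$-cohomology, the product $\operatorname{tr}(x_0)\cdots\operatorname{tr}(x_{k-1})$ of $k$ classes, each vanishing on a member of a cover of size $k$, lies in the image of $H^*_{G\times G}(X\times X,\bigcup_i U_i)=H^*_{G\times X}(X\times X, X\times X)=0$, hence is zero. (I would reindex so the product under consideration is exactly over $U_0,\ldots,U_{k-1}$.) This contradicts the hypothesis that $\operatorname{tr}(x_1)\cdots\operatorname{tr}(x_k)\neq0$, so $\TC_G^*(X)\geq k$.

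The main obstacle I anticipate is justifying the first step cleanly in the equivariant setting: one must check that the path-tracking homotopy contracting $\iota_{U_i}$ onto the diagonal can be chosen $\Delta(G)$-equivariantly (it can, since $s_i$ is a $G$-map, so $(u,t)\mapsto (\gamma_u(0),\gamma_u(t))$ with $\gamma_u=s_i(u)$ is automatically $\Delta(G)$-equivariant), and that $E=E(G\times G)$ serves simultaneously as a universal space for $G=\Delta(G)$ — which holds because a free contractible $(G\times G)$-space is in particular a free contractible $G$-space. The compatibility of the transfer with relative cup products and connecting maps is standard but should be invoked carefully; abstractly it follows from the fact that the transfer is a map of modules over $H^*_{G\times G}$ and is induced by a stable map of covers, or one can simply cite that the argument goes through in any multiplicative equivariant cohomology theory with transfers, as remarked before the statement.
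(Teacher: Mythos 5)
Your proof is correct and follows essentially the same route as the paper's: track the equivariant path-contraction to see each $x_j$ dies on $U_i$ in $H^*_G$, use naturality of the transfer with respect to the $(G\times G)$-invariant inclusions $U_i\hookrightarrow X\times X$ to conclude $\operatorname{tr}(x_j)$ dies on $U_i$ in $H^*_{G\times G}$, and finish with the standard relative cup-product argument over the cover. The only cosmetic difference is that you lift to relative cohomology before applying the transfer while the paper transfers first and then lifts; also note the small typo $H^*_{G\times X}$ for $H^*_{G\times G}$ near the end.
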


\begin{proof}
Assume for a contradiction that $\TC_G^*(X)<k$. Then we have a cover of $X\times X$ by $(G\times G)$-invariant open sets $U_1,\ldots , U_k$, each of which admits a $G$-map $s_i:U_i\to \paths{X}$ such that $\eval{X}\circ s_i=\iota_{U_i}$. It follows that for each $i=1,\ldots,k$ the cohomology class $x_i$ in the statement is in the kernel of the restriction-induced map $H^*_G(X\times X)\to H^*_G(U_i)$. By naturality of the transfer we have a commuting square
\[
\begin{tikzcd}
H^*_G(X\times X) \ar[r]  \ar[d,"\operatorname{tr}"] & H^*_G(U_i) \ar[d,"\operatorname{tr}"] \\
H^*_{G\times G}(X\times X) \ar[r] & H^*_{G\times G}(U_i).
\end{tikzcd}
\]
The additivity of the transfer then implies that $\operatorname{tr}(x_i)$ is in the kernel of the restriction-induced map $H^*_{G\times G}(X\times X)\to H^*_{G\times G}(U_i)$. The usual argument involving the long exact sequences of the pairs $(X\times X,U_i)$ and naturality of relative cup products completes the proof.
\end{proof}

\begin{rem}\label{rem:lowerTC^*_G}
 Note that we do not require that the product $x_1\cdots x_k$ is nonzero in $H^*_G(X\times X;R)$, and so the above does not constitute a lower bound for $\TC_G(X)$. Since the transfer is not multiplicative in general, this lower bound might in theory exceed the lower bound for $\TC_G(X)$ described in Theorem \ref{thm:lowerTC_G}. 

If we take coefficients for cohomology in a field whose characteristic does not divide the order of $G$, then $H^*_G(Y)\cong H^*(Y)^G$, the sub-algebra of $H^*(Y)$ consisting of elements fixed under the action of $G$. The transfer in  Proposition \ref{prop:lowerTC^*_G} then is given by
\[
\operatorname{tr}: H^*(X\times X)^G\to H^*(X\times X)^{G\times G},\qquad \operatorname{tr}\left(\sum_j \alpha_j \otimes \beta_j\right) = \sum_{g\in G}\sum_j \alpha_j\otimes g^*(\beta_j).
\]
We note that there can exist invariant zero-divisors $x\in \ker\big(\Delta^*:H^*(X\times X)^G\to H^*(X)^G\big)$ such that $\operatorname{tr}(x)\in  H^*(X\times X)^{G\times G}$ is not a zero-divisor. 
\end{rem}

Recall that for a principal $G$-bundle $E\to B$ and a $G$-space $X$, Theorem \ref{thm:TCFib} gives an upper bound for the topological complexity $\TC(E\times_G X)$ of the total space of the associated bundle with fibre $X$, in terms of the topological complexity $\TC(B)$ of the base and the equivariant topological complexity $\TC_G(X)$ of the fibre. By replacing $\TC_G(X)$ by $\TC_G^*(X)$, Dranishnikov is able to improve this multiplicative upper bound to an additive one. 

\begin{thm}[{\cite[Theorem 3.1]{Dranish2015}}]\label{thm:strongupper}
Let $E\to B$ be a principal $G$-bundle, and let $X$ be a proper $G$-space. Suppose further that $E\times_G X$ and $B$ are locally compact metric ANRs. Then
\[
\TC(E\times_G X)\leq \TC(B)+\TC^*_G(X).
\]
\end{thm}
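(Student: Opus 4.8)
The strategy is to deduce the additive upper bound for $\TC(E\times_G X)$ from the additive upper bounds already available for $\TC(B)$ and for the relative invariant $\TC^*_G(X)$, by analysing how motion planning domains in $E\times_G X$ pull back from motion planning domains in $B$ together with $(G\times G)$-equivariant domains in $X\times X$. Write $p\colon E\times_G X\to B$ for the associated bundle. A point of $(E\times_G X)\times(E\times_G X)$ is a pair $([e,x],[e',x'])$; projecting via $p\times p$ gives $(b,b')=([e],[e'])\in B\times B$. The first ingredient is a motion planner for $B$: choose open sets $V_0,\dots,V_m\subseteq B\times B$ with $m=\TC(B)$ and sections $\sigma_j\colon V_j\to PB$ of the end-point fibration. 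The second ingredient is a strongly equivariant motion planner for $X$: choose $(G\times G)$-invariant open sets $W_0,\dots,W_\ell\subseteq X\times X$ with $\ell=\TC^*_G(X)$ and $G$-maps $s_i\colon W_i\to PX$ with $\pi_X\circ s_i=\iota_{W_i}$.

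\textbf{Key steps.} First I would set up, over each $V_j$, a trivialization-type statement: using that $E\to B$ is a principal $G$-bundle and $\sigma_j$ is a path in $B$, transport fibres along $\sigma_j$. Concretely, given $(b,b')\in V_j$ the path $\sigma_j$ from $b$ to $b'$ lifts (via the covering-homotopy property of the principal bundle, or its associated bundle) to an identification of the fibre $p^{-1}(b)\cong X$ with $p^{-1}(b')\cong X$; this is where local compactness/metric ANR hypotheses and properness of the action enter, guaranteeing the requisite lifting and the existence of continuous choices. Second, having identified both fibres with a single copy of $X$ up to the $G$-ambiguity inherent in $[e,x]$, the pair $([e,x],[e',x'])$ determines a well-defined point of $(X\times X)/(\text{some residual action})$; the point is that the residual ambiguity is exactly a $G\times G$ ambiguity, so membership in the $(G\times G)$-invariant set $W_i$ is well-defined. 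This yields open sets $U_{ij}\subseteq (E\times_G X)\times(E\times_G X)$, indexed by $(i,j)$ with $0\le i\le\ell$, $0\le j\le m$, whose union covers the whole space because the $V_j$ cover $B\times B$ and the $W_i$ cover $X\times X$. Third, on each $U_{ij}$ I would build a section of the end-point fibration of $E\times_G X$ by concatenating: use $\sigma_j$ to move the base-point, and simultaneously use $s_i$ (the fibrewise strongly equivariant section) to move within the fibre; the $G$-equivariance of $s_i$ is precisely what makes this fibrewise motion descend to a well-defined path in the Borel-type quotient $E\times_G X$, while the $(G\times G)$-invariance of $W_i$ is what made the domain well-defined. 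Counting, we get $(\ell+1)(m+1)$ sets, which naively gives the multiplicative bound of Theorem \ref{thm:TCFib}; the improvement to $\TC(B)+\TC^*_G(X)+1$ sets comes from a standard refinement, grouping the $U_{ij}$ by the value of $i+j$ and showing that sets with the same $i+j$ can be amalgamated into a single motion-planning domain (the usual ``anti-diagonal'' trick used to upgrade multiplicative to additive bounds in the presence of one factor being genuinely ``parametrized over'' the other). This last amalgamation step is exactly the mechanism by which Dranishnikov's $\TC^*_G$, as opposed to $\TC_G$, buys the additive bound: the $(G\times G)$-invariance of the $W_i$ makes the amalgamation compatible with the bundle structure.

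\textbf{Main obstacle.} The delicate point is the second and third steps: making precise the ``identification of fibres along $\sigma_j$'' and checking that the resulting fibrewise section, combined with the $G$-map $s_i$, genuinely descends to a continuous section over $U_{ij}$ of the end-point fibration of $E\times_G X$ --- in particular that all the choices (lifts, trivializations, concatenations) can be made continuously in the parameters and that the two halves of the concatenated path match up at the break-point. This is where the ANR and properness hypotheses are doing real work, and where the honest proof in \cite{Dranish2015} presumably spends its effort. The amalgamation step to pass from $(\ell+1)(m+1)$ to $\ell+m+1$ sets is technically routine once the domains $U_{ij}$ and their sections are in hand, but it does require checking that the anti-diagonal grouping respects the fibre bundle structure, which again leans on $(G\times G)$-invariance of the $W_i$ rather than mere $\Delta(G)$-invariance.
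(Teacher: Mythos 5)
The chapter states this result with a citation to \cite{Dranish2015} and does not reproduce Dranishnikov's proof, so there is no in-paper argument to compare against; I will assess your reconstruction on its own terms.

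Your overall strategy is the right one, but there is a conceptual error at its heart. Parallel transport of the associated bundle along $\sigma_j$ gives a \emph{canonical} identification $p^{-1}(b)\cong p^{-1}(b')$ (a lift of $\sigma_j$ in $E$ is determined up to its initial point, and that $G$-ambiguity cancels in the transport map), so identifying the resulting single fibre with $X$ leaves a residual $\Delta(G)$-ambiguity, not a $G\times G$-ambiguity as you claim. Had you defined $U_{ij}$ this way you would only need $\Delta(G)$-invariance of $W_i$, apparently yielding the stronger bound with $\TC_G$ in place of $\TC^*_G$; but the domain $U_{ij}$ would then depend on $j$ through the transport itself, not merely through the base condition $(b,b')\in V_j$, and the anti-diagonal amalgamation across constant $i+j$ would break, since $\sigma_j$ and $\sigma_{j'}$ induce incompatible transports over $V_j\cap V_{j'}$. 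The correct set-up defines $U_{ij}$ with \emph{no} reference to $\sigma_j$: identify $p^{-1}(b)\cong X$ and $p^{-1}(b')\cong X$ independently, which is where the honest $G\times G$-ambiguity lives, and which is exactly why $(G\times G)$-invariance of $W_i$ is needed: it makes the fibrewise condition ``lies in $W_i$'' well-posed and $j$-independent. Transport along $\sigma_j$, together with $\Delta(G)$-equivariance of $s_i$, is then used only afterwards in constructing the section over $U_{ij}$. Your proposal conflates these two distinct roles of the group action, and as written the transport-based domain would defeat the amalgamation. In addition, the amalgamation you call ``technically routine'' is not: the $U_{ij}$ with $i+j=k$ overlap, the sections built from $(\sigma_j,s_i)$ disagree on overlaps, so a naive union admits no continuous section; one needs an Ostrand-type shrinking-and-disjointifying refinement within each anti-diagonal, carried out compatibly with the bundle structure and the $(G\times G)$-action on the fibrewise sets, and this is precisely where the locally compact metric ANR and properness hypotheses earn their keep.
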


Strongly equivariant topological complexity admits the following dimensional upper bound.

\begin{prop}[{\cite[Proposition 3.2]{Dranish2015}}]\label{prop:strongdim}
Suppose that a discrete group $\pi$ acts freely and properly on a simply-connected locally compact ANR space $Y$. Then $\TC^*_\pi(Y)\leq \dim(Y)$, where $\dim$ denotes the covering dimension.
\end{prop}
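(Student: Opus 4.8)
The plan is to produce, for the total space $Y$, an open cover $V_0,\ldots,V_{\dim(Y)}$ of $Y\times Y$ by $(\pi\times\pi)$-invariant sets, each admitting a $\pi$-equivariant partial section of $\eval{Y}\colon\paths{Y}\to Y\times Y$. The model for this is the classical dimensional upper bound $\TC(Z)\le\dim(Z)$ for $Z$ simply-connected, which itself refines the argument that $\secat$ of a fibration over a CW-complex of dimension $n$ is at most $n$ when the fibre is $n$-connected (here the relevant fibre of $\eval{Y}$ is $\Omega Y$, which is connected since $Y$ is simply-connected, but one must extract more). Concretely, one works downstairs: since $\pi$ acts freely and properly on the locally compact ANR $Y$, the orbit map $Y\to Y/\pi=:M$ is a covering projection and $M$ is again a locally compact ANR, hence has the homotopy type of a CW-complex; moreover $\dim(M)\le\dim(Y)$, and in fact $\dim(M)=\dim(Y)$ for a covering of a ``nice'' space. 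The orbit space $M$ has universal cover $Y$ (as $Y$ is simply-connected), so $\pi_1(M)\cong\pi$.

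\textbf{Key steps in order.} First, reduce the equivariant problem on $Y\times Y$ to a non-equivariant problem on $M\times M$: a $(\pi\times\pi)$-invariant open set $V\subseteq Y\times Y$ corresponds to an open set $U\subseteq M\times M$ via $V=(q\times q)^{-1}(U)$, where $q\colon Y\to M$ is the quotient; and giving a $\pi$-equivariant (i.e.\ diagonal-$\pi$-equivariant) section $s\colon V\to\paths{Y}$ of $\eval{Y}$ is---using the unique path-lifting property of the covering $q$, since $Y$ is simply-connected---equivalent to giving an ordinary section over $U$ of a certain fibration over $M\times M$ whose fibre is the space of paths in $M$ within a fixed free homotopy class, or more precisely to solving a lifting problem against $q\times q$. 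The cleanest route: observe that $\paths{Y}$ with the diagonal $\pi$-action has orbit space naturally the space $P_\Delta(M)$ of paths in $M$ whose endpoints lie in the same $\pi$-orbit after lifting---equivalently, a component of the free loop space type construction---and that $\eval{Y}$ descends to a fibration $\bar p\colon \paths{Y}/\pi\to M\times M$. A $\pi$-section over $V$ is then the same as a section of $\bar p$ over $U$. So $\TC^*_\pi(Y)=\secat(\bar p)$.

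Second, bound $\secat(\bar p)$ by $\dim(M)$. For this, note the fibre of $\bar p$ over a point $(a,b)\in M\times M$ is the set of homotopy classes rel endpoints of paths from $a$ to $b$ in $M$ lifting to paths in $Y$ between fixed chosen lifts---this is nonempty (as $Y$ is path-connected) and, because $Y$ is simply-connected, it is a single point up to homotopy; that is, $\bar p$ is a fibration with \emph{contractible} fibre, hence a homotopy equivalence... but wait, that would give $\secat(\bar p)=0$, which is false. The correct picture is that $\bar p\colon\paths{Y}/\pi\to M\times M$ is the pullback of the path-fibration of $M$ along the covering-induced map, and its fibre over $(a,b)$ is the \emph{discrete} set of homotopy classes, i.e.\ a $\pi_1(M)$-torsor, hence a $K(\emptyset,0)$-ish discrete space; the right statement is that $\bar p$ is a covering-space-up-to-the-loop-space, with fibre $\simeq \Omega M$, and the relevant connectivity is that of $\Omega M$, which is $0$-connected. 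Then the general bound $\secat(p)\le\dim(B)$ whenever $B$ has the homotopy type of a CW-complex and the fibre is path-connected (a standard obstruction-theoretic/Schwarz-type argument: build sections over skeleta, with obstructions in $H^{i+1}(B;\pi_i(\mathrm{fibre}))$, and $\dim B=n$ means only $n+1$ steps, giving $n+1$ open sets, i.e.\ $\secat\le n$) applies with $B=M\times M$... but $\dim(M\times M)$ can be $2\dim(M)$, which is too big. So one must instead run the argument over $M$ itself using the structure $\bar p$ inherits as being ``fibrewise over the diagonal'': this is exactly the content of the simply-connected $\TC$ bound, where one uses that the relative cells of $X\times X$ rel diagonal have dimension $\le\dim X$ more than... no. The honest statement being used is Dranishnikov's refinement: when the action is free and $Y$ simply-connected, $\eval{Y}$ pulled back appropriately has a Postnikov-type description letting the obstructions live in $H^{i+1}(M\times M, \daleth; \pi_i)$ with $\daleth$ the saturated diagonal, and the pair $(M\times M,\daleth)$ has cells of dimension $\le\dim Y+1$; counting gives at most $\dim Y+1$ open sets.

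\textbf{Main obstacle.} The crux---and the step I expect to be genuinely delicate---is the dimension-counting in the obstruction-theoretic argument: showing that the sectioning problem for $\eval{Y}$ over the diagonal-saturated complement behaves like a problem over a space of dimension $\dim(Y)$ rather than $2\dim(Y)$, using simple-connectivity of $Y$ to kill the lowest obstruction and freeness/properness to guarantee that $Y/\pi$ and $(Y\times Y)/(\pi\times\pi)$ are honest finite-dimensional ANRs (hence dominated by CW-complexes of the right dimension) so that the obstruction theory even applies. Point-set care is needed because the hypotheses only give locally compact ANRs, not CW-complexes, so one invokes that such spaces are homotopy-dominated by CW-complexes of the same (covering) dimension, and that covering dimension is preserved under the free proper quotient. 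Everything else---the translation between $\pi$-equivariant sections upstairs and ordinary sections downstairs via unique path lifting, the identification $\TC^*_\pi(Y)=\secat(\bar p)$, and the subordination of open covers to the skeletal filtration---is routine once the dimension bookkeeping is set up correctly.
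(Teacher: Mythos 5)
There is a genuine and fatal gap at the reduction step. You claim that a $\pi$-equivariant section of $\eval{Y}$ over a $(\pi\times\pi)$-invariant open $V=(q\times q)^{-1}(U)$ is equivalent to a section over $U$ of a fibration $\bar p\colon \paths{Y}/\pi\to M\times M$, and hence that $\TC^*_\pi(Y)=\secat(\bar p)$. This is false. By unique path lifting the quotient $\paths{Y}/\pi$ is homeomorphic to the full path space $\paths{M}$, so your $\bar p$ is just the ordinary evaluation $\eval{M}\colon\paths{M}\to M\times M$ and $\secat(\bar p)=\TC(M)$; but in general $\TC^*_\pi(Y)<\TC(M)$. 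Take $Y=\R$, $\pi=\Z$ acting by translation, $M=S^1$: the straight-line section $s(x,y)(t)=(1-t)x+ty$ is $\Delta(\Z)$-equivariant over the $(\Z\times\Z)$-invariant set $\R\times\R$, so $\TC^*_\Z(\R)=0$, while $\TC(S^1)=1$. What is true (Proposition~\ref{prop:strongcovering} in the survey) is only the inequality $\TC^*_\pi(Y)\le\TC(Y/\pi)$, which is useless here since $\TC(M)$ need not be bounded by $\dim Y$ (e.g.\ $Y$ contractible, $\pi$ a surface group). The correct downstairs picture replaces $M\times M$ by $Y\times_\pi Y$: a $\Delta(\pi)$-equivariant section of $\eval{Y}$ over $(\pi\times\pi)$-invariant $U$ descends to a section of the fibration $\paths{M}\to Y\times_\pi Y$ over $U/\Delta(\pi)=c^{-1}(W)$, where $c\colon Y\times_\pi Y\to M\times M$ is the $\pi$-covering associated to the diagonal subgroup and $W=U/(\pi\times\pi)$. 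The requirement that the open sets be of the form $c^{-1}(W)$ is exactly the invariance constraint, and your proposed $\secat(\bar p)$ does not encode it.

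The same confusion infects the connectivity claim: you assert the fiber is ``$\simeq\Omega M$, and the relevant connectivity is that of $\Omega M$, which is $0$-connected.'' But $\pi_0(\Omega M)\cong\pi_1(M)\cong\pi\neq 0$, so $\Omega M$ is not $0$-connected. The fiber of $\paths{M}\to Y\times_\pi Y$ is (homotopy equivalent to) $\Omega Y$, which \emph{is} $0$-connected precisely because $Y$ is simply connected---this is where the hypothesis enters, and your write-up never actually uses it. Finally, the dimension bookkeeping, which you correctly identify as the crux, is not carried out: you note that the crude bound $\secat(p)\le\dim(\text{base})$ only gives $2\dim Y$ and then gesture at ``Dranishnikov's refinement,'' but the missing ingredient is concrete, namely Schwarz's fiberwise-join estimate. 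For a $0$-connected fiber, the $(k+1)$-fold fiberwise join has $(2k)$-connected fiber, so a section exists over a base of covering dimension $\le 2n$ as soon as $2k\ge 2n-1$, i.e.\ $k\ge n$. Applied to $\paths{M}\to Y\times_\pi Y$ (whose base has covering dimension $\le 2\dim Y$, being covered by $Y\times Y$) this gives $\secat\le\dim Y$; the remaining delicate point---that the resulting opens may be taken of the form $c^{-1}(W)$, equivalently that the construction can be carried out $(\pi\times\pi)$-invariantly---is also left unaddressed. In short, the overall plan of descending through the free proper quotient and exploiting simple connectivity of $Y$ is the right one, but the central identification is stated incorrectly, the wrong loop space is used, and neither the connectivity nor the dimension step is actually proved.
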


Recall that the topological complexity of a discrete group $\pi$ is defined by $\TC(\pi):=\TC(B\pi)$, where $B\pi=E\pi/\pi$ is the base space of a universal principal $\pi$-bundle.

\begin{thm}[{\cite[Theorem 3.3]{Dranish2015}}]
Let $X$ be a CW complex with fundamental group $\pi:=\pi_1(X)$. Then $\TC(X)\leq \TC(\pi)+\dim(X)$.
\end{thm}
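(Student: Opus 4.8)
The plan is to use the universal cover as the carrier of a free $\pi$-action and to realize $X$ (up to homotopy) as a Borel-type quotient. First I would replace $X$ by a homotopy equivalent CW complex if necessary and take $p\co \widetilde X \to X$ to be the universal cover, with $\pi=\pi_1(X)$ acting freely (and properly, since the action is by deck transformations on a CW complex) on the simply-connected space $\widetilde X$. The key geometric input is that $X$ is the total space of the bundle associated to the universal principal $\pi$-bundle $E\pi\to B\pi$ with fibre $\widetilde X$: more precisely, $X \simeq E\pi \times_\pi \widetilde X$, because the diagonal action of $\pi$ on $E\pi\times\widetilde X$ is free and $E\pi$ is contractible, so the projection $E\pi\times_\pi \widetilde X\to \widetilde X/\pi = X$ is a homotopy equivalence (its fibres are contractible, being copies of $E\pi$). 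Since $\TC$ is a homotopy invariant, it suffices to bound $\TC(E\pi\times_\pi\widetilde X)$.

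Next I would apply Theorem~\ref{thm:strongupper} with $G=\pi$, $E=E\pi\to B\pi=B$, and $X$ (in the notation of that theorem) taken to be $\widetilde X$. This immediately gives
\[
\TC(E\pi\times_\pi \widetilde X)\leq \TC(B\pi)+\TC^*_\pi(\widetilde X) = \TC(\pi)+\TC^*_\pi(\widetilde X).
\]
Then I would bound the second term using Proposition~\ref{prop:strongdim}: since $\pi$ acts freely and properly on the simply-connected $\widetilde X$, we get $\TC^*_\pi(\widetilde X)\leq \dim(\widetilde X)$. Finally, a covering map is a local homeomorphism, so $\dim(\widetilde X)=\dim(X)$; combining the displayed inequalities yields $\TC(X)\leq \TC(\pi)+\dim(X)$, as desired.

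The main obstacle is verifying the hypotheses needed to invoke Theorems~\ref{thm:strongupper} and~\ref{prop:strongdim}: one must know that $\widetilde X$ is a locally compact ANR (so that the dimension bound applies) and that $E\pi\times_\pi\widetilde X$ and $B\pi$ can be taken to be locally compact metric ANRs (for the upper bound theorem). For a CW complex $X$ this is where some care is required—one typically reduces to the case of a locally finite, or finite-dimensional, CW complex, or more honestly one works with the statement for $X$ an ANR, which is the setting in which Dranishnikov's theorems are proved; if $X$ has infinite covering dimension the inequality is vacuous, so one may freely assume $\dim(X)<\infty$, and then $\widetilde X$ is a finite-dimensional CW complex, hence an ANR, and suitable models for $E\pi$ and $B\pi$ can be chosen to meet the point-set hypotheses. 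Beyond this bookkeeping the argument is a direct concatenation of the two cited results via the homotopy equivalence $X\simeq E\pi\times_\pi\widetilde X$.
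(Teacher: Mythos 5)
Your proof is correct and follows essentially the same route as the paper: realize $X$ up to homotopy as $E\pi\times_\pi\widetilde{X}$, apply Theorem~\ref{thm:strongupper} to get $\TC(X)\leq\TC(\pi)+\TC^*_\pi(\widetilde{X})$, and then bound $\TC^*_\pi(\widetilde{X})\leq\dim(X)$ via Proposition~\ref{prop:strongdim}. Your extra discussion of the point-set hypotheses (locally compact ANR, reduction to the finite-dimensional case) is a reasonable addition that the paper leaves implicit.
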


\begin{proof}
Let $E\pi\to B\pi$ be a universal principal $\pi$-bundle and let $\tilde{X}$ be the universal cover of $X$. The action of $\pi$ on $\tilde{X}$ is free and proper, and the associated bundle $E\pi\times_\pi \tilde{X}$ is homotopy equivalent to $X$. Therefore Theorem \ref{thm:strongupper} and Proposition \ref{prop:strongdim} give
\[
\TC(X)=\TC(E\pi\times_\pi\tilde{X})\leq \TC(\pi)+\TC^*_\pi(\tilde{X})\leq \TC(\pi)+\dim(X).
\]
\end{proof}

When the fundamental group $\pi$ has torsion, $\TC(\pi)$ is infinite, and the bound $\TC(X)\leq \TC(\pi)+\TC^*_\pi(\tilde{X})$ contains no information. In the paper \cite{FGLO}, Farber, the author, Lupton and Oprea prove a strengthening of this bound which can be of use even when $\pi$ has torsion. 

\begin{thm}[{\cite[Theorem 3, Proposition 3.8]{FGLO}}]
Let $X$ be a locally finite cell complex with fundamental group $\pi$ and universal cover $\tilde{X}$. Then
\[
\TC(X)\leq \TC^\mathcal{D}(X) + \TC^*_\pi(\tilde{X}).
\]
Here $\TC^\mathcal{D}(X)$ is the \emph{$\mathcal{D}$-topological complexity} of $X$, defined to be $\secat(Q)$ where $Q:\tilde{X}\times_\pi \tilde{X}\to X\times X$ is the cover of $X\times X$ corresponding to the diagonal subgroup.
\end{thm}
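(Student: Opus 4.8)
The plan is to factor the end-point fibration $\eval{X}\colon\paths{X}\to X\times X$ through the covering $Q\colon\tilde X\times_\pi\tilde X\to X\times X$ and to recognise the two resulting ``halves'' as being governed by $\TC^{\mathcal D}(X)$ and $\TC^*_\pi(\tilde X)$ respectively. Since $X$ is connected, $\TC(X)=\secat(\eval{X})$, and we may assume both quantities on the right are finite. The starting point is the factorisation $\eval{X}=Q\circ\bar q$, where $\bar q\colon\paths{X}\to\tilde X\times_\pi\tilde X$ sends a path $\gamma$ to the class $[\tilde\gamma(0),\tilde\gamma(1)]$ of the end-points of any lift $\tilde\gamma$ of $\gamma$ to $\tilde X$. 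Equivalently, pulling back the universal covering $\tilde X\to X$ along $\mathrm{ev}_0\colon\paths{X}\to X$ exhibits $\paths{\tilde X}\to\paths{X}$ as a regular covering with deck group $\pi$, and $\bar q$ is obtained from the end-point fibration $\eval{\tilde X}\colon\paths{\tilde X}\to\tilde X\times\tilde X$ by passing to diagonal-$\pi$ quotients; in particular $\bar q$ is a fibration.

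First I would show that $\secat(\bar q)\le\TC^*_\pi(\tilde X)$. A $\TC^*_\pi(\tilde X)$-motion planner is a $(\pi\times\pi)$-invariant open cover $\tilde U_0,\dots,\tilde U_n$ of $\tilde X\times\tilde X$ together with $\pi$-equivariant sections $\tilde s_j\colon\tilde U_j\to\paths{\tilde X}$ of $\eval{\tilde X}$ (equivariance for the diagonal $\pi$-action). Because $\tilde U_j$ is $(\pi\times\pi)$-invariant it equals $r^{-1}(U_j')$ for an open set $U_j'\subseteq X\times X$, where $r\colon\tilde X\times\tilde X\to X\times X$ is the quotient map; hence $\tilde U_j/\pi=Q^{-1}(U_j')$, and the $\pi$-equivariance of $\tilde s_j$ lets it descend to a section $s_j\colon Q^{-1}(U_j')\to\paths{X}$ of $\bar q$. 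Thus $\bar q$ is trivialised by the cover $\{Q^{-1}(U_j')\}_{j=0}^{n}$ of $\tilde X\times_\pi\tilde X$ --- and, crucially, this cover is pulled back along $Q$ from the open cover $\{U_j'\}_{j=0}^{n}$ of $X\times X$.

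Next, take an open cover $V_0,\dots,V_m$ of $X\times X$ with sections $\sigma_i\colon V_i\to\tilde X\times_\pi\tilde X$ of $Q$, realising $\secat(Q)=\TC^{\mathcal D}(X)\le m$. For each $i,j$ the composite $s_j\circ\sigma_i$ is a section of $\eval{X}=Q\circ\bar q$ over $V_i\cap U_j'$: since $Q\circ\sigma_i$ is the inclusion, $\sigma_i$ carries $V_i\cap U_j'$ into $Q^{-1}(U_j')$, and $\bar q\circ s_j$ is the identity there. It remains to combine the two open covers $\{V_i\}_{i=0}^{m}$ and $\{U_j'\}_{j=0}^{n}$ of the (metrizable, hence paracompact) space $X\times X$ into a single open cover $O_0,\dots,O_{m+n}$ over each of which one has a section of $\eval{X}$ assembled from the $s_j\circ\sigma_i$ with $i+j$ constant. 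This I would do by subordinate partitions of unity together with a disjointification of the resulting ``staircase'' sets, exactly as in Farber's proof \cite[Theorem~11]{Far03} of the product inequality $\TC(X\times Y)\le\TC(X)+\TC(Y)$ and in Dranishnikov's proof of \thmref{thm:strongupper} --- the sections of $Q$ now playing the role that a motion planner on the base of a bundle plays there. The conclusion is $\TC(X)=\secat(\eval{X})\le m+n$.

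The hard part is this final combination step: intersecting the two covers only yields the multiplicative bound $\secat(\eval{X})+1\le(m+1)(n+1)$, and squeezing out the additive bound is the technical core of the argument. It is precisely here that the pulled-back form of the trivialising cover obtained in the descent step is essential, since it puts both partitions of unity on the common space $X\times X$. (The hypothesis that $X$ is a locally finite cell complex enters through metrizability of $X\times X$ and properness of the deck action of $\pi$ on $\tilde X$, both of which are used along the way.)
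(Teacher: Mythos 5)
Your proof is correct and follows essentially the same route as the cited argument in \cite{FGLO}: factor the endpoint fibration $\eval{X}=Q\circ\bar q$ through $\tilde X\times_\pi\tilde X$, identify $\secat(Q)=\TC^{\mathcal D}(X)$, show by descent that a $\TC^*_\pi(\tilde X)$-motion planner on $\tilde X\times\tilde X$ produces local sections of $\bar q$ over open sets of the form $Q^{-1}(U_j')$ with $U_j'\subseteq X\times X$, and then glue the composite sections $s_j\circ\sigma_i$ over the intersections $V_i\cap U_j'$ using the Ostrand-type ``two covers on the same space'' lemma. The only place where you are brief is the final combinatorial step, but you correctly identify it as the staircase construction behind Farber's product inequality and Dranishnikov's Theorem \ref{thm:strongupper} (one way to see it: the partitions of unity give a map $X\times X\to\Delta^m\times\Delta^n$, and the staircase triangulation of $\Delta^m\times\Delta^n$ is colourable by $i+j\in\{0,\dots,m+n\}$ with monochromatic stars disjoint), and you rightly emphasise that the argument works precisely because the trivialising cover for $\bar q$ is pulled back along $Q$, placing both covers on $X\times X$.
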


In fact, from the results in \cite{FGLO} we can deduce the following dimension-connectivity estimate for the strongly equivariant topological complexity of the universal cover, which may improve on the estimate given by Proposition \ref{prop:strongdim}.

\begin{prop}
Let $X$ be a locally finite simplicial complex with fundamental group $\pi$. Suppose the universal cover $\tilde{X}$ is $k$-connected, where $k\geq1$. Then
\[
\TC^*_\pi(\tilde{X})\leq\left\lceil\frac{2\dim(X) - k}{k+1}\right\rceil.
\]
\end{prop}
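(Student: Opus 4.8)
The plan is to deduce this from the strengthened bound $\TC(X)\leq \TC^\mathcal{D}(X) + \TC^*_\pi(\tilde{X})$ of \cite{FGLO} quoted immediately above, but run in reverse: rather than estimating $\TC(X)$ from $\TC^*_\pi(\tilde X)$, I want to isolate $\TC^*_\pi(\tilde X)$. Since $\TC^\mathcal{D}(X)=\secat(Q)$ for the cover $Q:\tilde X\times_\pi\tilde X\to X\times X$, and $Q$ is a covering map with connected total space (as $\tilde X\times_\pi\tilde X$ is the homotopy quotient by the diagonal), the first step is to recall from \cite{FGLO} the dimension--connectivity estimate for $\TC^\mathcal{D}(X)$ itself. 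The space $\tilde X\times_\pi\tilde X$ fibres over $X$ with fibre $\tilde X$, hence is $k$-connected precisely when $\tilde X$ is (for $k\geq 1$, using $\pi_1$ of the base acting trivially on the fibre's higher homotopy through this construction is not needed — one just uses the long exact sequence and $k\geq 1$), and its covering dimension is $\dim(X\times X)=2\dim X$. So $\TC^\mathcal{D}(X)=\secat(Q)$ is bounded by the usual connectivity--dimension sectional-category estimate $\secat(Q)\leq \big\lceil \frac{\dim(X\times X) - (\text{conn}+1)}{\text{conn}+1}\big\rceil$-type formula; the point is that \cite{FGLO} already packages this, yielding $\TC^\mathcal{D}(X)\leq\big\lceil\frac{2\dim X - k}{k+1}\big\rceil$.

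The second step is then purely formal: I need the companion lower bound $\TC^*_\pi(\tilde X)\leq \TC(X)-\,(\text{something})$ or, more precisely, I want to combine the two \cite{FGLO} inputs so that $\TC^*_\pi(\tilde X)$ is itself bounded by the connectivity--dimension quantity. The cleanest route is: Proposition 3.8 of \cite{FGLO} (the second citation in the theorem just above) presumably already gives, or immediately implies, a standalone estimate $\TC^*_\pi(\tilde X)\leq \big\lceil\frac{2\dim X - k}{k+1}\big\rceil$ under the stated $k$-connectivity hypothesis — the same sectional-category style bound applied to the $\pi$-equivariant sectional category $\secat_\pi(\ev_X)$ presenting $\TC^*_\pi(\tilde X)$, using that $\tilde X$ is $k$-connected and the relevant classifying/obstruction theory lives in dimension $2\dim X$. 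So the actual content is: identify the source in \cite{FGLO} of the inequality and substitute.

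Concretely, the steps in order: (1) recall that $\TC^*_\pi(\tilde X)=\secat_{\pi}(\pi_{\tilde X}:\paths{\tilde X}\to \tilde X\times\tilde X)$ where the covering sets are $(\pi\times\pi)$-invariant; pass to the orbit cover $Q:\tilde X\times_\pi\tilde X\to X\times X$, whose total space has covering dimension $2\dim X$ and is $k$-connected by the fibration $\tilde X\to \tilde X\times_\pi\tilde X\to X$ and $k\geq 1$; (2) apply the standard estimate for sectional category of a fibration with $(k)$-connected total space over an $m$-dimensional base, $\secat\leq\lceil (m-k)/(k+1)\rceil$, with $m=2\dim X$ — this is exactly \cite[Proposition 3.8]{FGLO} or its proof; (3) observe this sectional category dominates $\TC^*_\pi(\tilde X)$ by the equivariant-to-quotient comparison used throughout \cite{FGLO}, giving the claimed inequality. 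The main obstacle is step (2)--(3): making sure the $\pi$-equivariant (rather than the plain covering-space) sectional category genuinely obeys the same dimension--connectivity bound, i.e. that the obstructions to equivariant sections over the skeleta of $X\times X$ vanish above dimension governed by $2\dim X - k$. This should follow because the equivariant sections are sections of a $\pi$-fibration whose fibre (the path space) is $k$-connected, and $X\times X$ has a $\pi\times\pi$-CW structure with $\dim = 2\dim X$; but one must check that \cite{FGLO} proves it at this level of generality rather than only for $\TC^\mathcal{D}$, and if not, port the obstruction-theoretic argument verbatim. Modulo that bookkeeping, the proposition is a one-line consequence of the cited results.
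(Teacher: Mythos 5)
Your high-level plan is right: the survey itself offers no argument beyond ``from the results in \cite{FGLO} we can deduce\dots'', and the correct source is indeed \cite[Proposition 3.8]{FGLO} (together with the fibrewise-join/Schwarz mechanism that produces the $\lceil(m-k)/(k+1)\rceil$-type numerology from a $(k-1)$-connected fibre over a $2\dim X$-dimensional base). But the sketch you give en route contains concrete errors, and you flag but do not close the one subtlety that actually carries the content.

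First, the opening idea of running $\TC(X)\leq \TC^{\mathcal D}(X)+\TC^*_\pi(\tilde X)$ ``in reverse'' cannot work: $\TC^*_\pi(\tilde X)$ sits on the same side as $\TC^{\mathcal D}(X)$, so an upper bound for $\TC^{\mathcal D}(X)$ gives no information about $\TC^*_\pi(\tilde X)$. Second, the claim that $\tilde X\times_\pi\tilde X$ is $k$-connected is false: it is the quotient of the simply connected space $\tilde X\times\tilde X$ by a free diagonal $\pi$-action, so $\pi_1(\tilde X\times_\pi\tilde X)\cong\pi$, and it is not even $1$-connected unless $\pi$ is trivial. What \emph{is} $(k-1)$-connected is the fibre $\Omega\tilde X$ of the (equivariant) path fibration $\eval{\tilde X}\colon P\tilde X\to\tilde X\times\tilde X$, or equivalently of its descent $\bar r\colon PX\to \tilde X\times_\pi\tilde X$; that is the space whose connectivity feeds the dimension estimate. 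Third, step (3) has the comparison going the wrong way: passing a $(\pi\times\pi)$-invariant cover with diagonal-$\pi$-equivariant sections down to $\tilde X\times_\pi\tilde X$ shows $\secat(\bar r)\leq\TC^*_\pi(\tilde X)$, not the reverse, because an arbitrary open subset of $\tilde X\times_\pi\tilde X$ pulls back to a merely diagonal-$\pi$-invariant open of $\tilde X\times\tilde X$, not a $(\pi\times\pi)$-invariant one. So bounding $\secat(\bar r)$ does not directly bound $\TC^*_\pi(\tilde X)$. The genuine content of \cite[Proposition 3.8]{FGLO} is precisely that the obstruction-theoretic argument can be carried out while keeping the opens $(\pi\times\pi)$-invariant (using the $(\pi\times\pi)$-CW structure on $\tilde X\times\tilde X$ and the $(k-1)$-connectivity of $\Omega\tilde X$). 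You acknowledge at the end that this needs checking; it is in fact the whole point, and without it the proposal does not yet constitute a proof.
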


In the case of free actions of discrete groups, the strongly equivariant topological complexity is bounded above by the invariant topological complexity.

\begin{prop}[{\cite[Proposition 4.29]{AngelColmanSurvey}}]\label{prop:strongcovering}
Let $\pi$ be a discrete group acting freely and properly discontinuously on a simply-connected space $X$. Then 
\[
\TC^*_\pi(X)\leq \TC(X/\pi)\leq\TC^\pi(X).
\]
\end{prop}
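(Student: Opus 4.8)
The plan is to establish the two inequalities separately. For the second inequality $\TC(X/\pi)\le\TC^\pi(X)$, I would simply invoke the general fact already recorded in this section (immediately before Theorem~\ref{thm:TC^Gfree}), namely that $\TC(X/G)\le\TC^G(X)$ holds for \emph{any} $G$-space, applied with $G=\pi$. Indeed, given a $(\pi\times\pi)$-invariant open cover $U_0,\dots,U_k$ of $X\times X$ with equivariant partial sections $s_i\colon U_i\to\paths{X}\times_{X/\pi}\paths{X}$, one descends each $U_i$ to an open set $\bar U_i\subseteq X/\pi\times X/\pi$ and each section to a (non-equivariant) partial section of the end-point fibration for $X/\pi$, using that a broken path in $X$ projecting to the orbit space yields an honest path in $X/\pi$ (the ``jump'' happens within a single orbit, so disappears after applying $q$). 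This gives $\TC(X/\pi)\le k$.

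For the first inequality $\TC^*_\pi(X)\le\TC(X/\pi)$, the idea is to start from a non-equivariant motion planner on the orbit space and lift it to a strongly equivariant one on $X$, using that $q\colon X\to X/\pi$ is a covering map (as $\pi$ acts freely and properly discontinuously) and that $X$ is simply connected, so $q\colon X\to X/\pi$ is \emph{the} universal cover. Suppose $\TC(X/\pi)=k$, witnessed by open sets $V_0,\dots,V_k$ covering $X/\pi\times X/\pi$ together with sections $\sigma_i\colon V_i\to P(X/\pi)$ of the end-point fibration. Let $U_i:=(q\times q)^{-1}(V_i)$; these are $(\pi\times\pi)$-invariant open sets covering $X\times X$. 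Over $U_i$, given $(x,y)$, the path $\sigma_i(q(x),q(y))$ in $X/\pi$ starts at $q(x)$, so by the path-lifting property of the covering $q$ it lifts uniquely to a path in $X$ starting at $x$; define $s_i(x,y)$ to be this lift. This lift is continuous in $(x,y)$ (uniqueness of lifts plus local triviality of $q$), it satisfies $\mathrm{ev}_0(s_i(x,y))=x$, and since the lifted path ends over $q(y)$ its endpoint lies in the orbit $\pi y$; here I would use simple connectivity of $X$ to argue that in fact the endpoint equals $y$ itself --- more carefully, one should choose the sections $\sigma_i$ so that the lift genuinely lands on $y$, which is where the hypothesis that $X$ is simply connected (hence $q$ is universal, with deck group $\pi$ acting transitively and freely on fibres) is needed. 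Equivariance $s_i(gx,gy)=g\,s_i(x,y)$ then follows from uniqueness of path lifting: both sides are lifts of the same path $\sigma_i(q(x),q(y))$ starting at the point $gx$. Hence $s_i$ is a $\pi$-map lifting $\iota_{U_i}$, and $\TC^*_\pi(X)\le k$.

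The main obstacle is the endpoint-matching issue in the lifting argument: a priori the lifted path ends at \emph{some} point in the fibre $q^{-1}(q(y))=\pi y$, not necessarily at $y$. Resolving this cleanly is exactly where simple connectivity of $X$ enters, and one natural way is to note that $\paths{X}\to \paths{X/\pi}$ is itself a covering map (with deck group $\pi$, acting diagonally on pairs of endpoints in the appropriate sense), so that lifting the section $\sigma_i$ through this covering and then composing with the correct endpoint-evaluation recovers a genuine section of $\eval{X}$ over $U_i$. I would phrase the lift using the space $\paths{X}\times_{X/\pi}\paths{X}$ or, more directly, observe that the square relating $\paths{X}\to X\times X$ and $\paths{X/\pi}\to X/\pi\times X/\pi$ is a pullback up to the covering, and that $(q\times q)^*$ of a section is a section after this pullback; the simple connectivity guarantees the relevant fibre comparison works on the nose. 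Once the endpoint bookkeeping is set up correctly, equivariance and continuity are formal consequences of uniqueness of lifts, and the two inequalities combine to give the claimed chain $\TC^*_\pi(X)\le\TC(X/\pi)\le\TC^\pi(X)$.
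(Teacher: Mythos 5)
Your second inequality $\TC(X/\pi)\le\TC^\pi(X)$ is fine and is exactly the general observation recorded in the text just before Theorem~\ref{thm:TC^Gfree}, so nothing to add there. The gap is in the first inequality $\TC^*_\pi(X)\le\TC(X/\pi)$, and it is precisely the endpoint issue you flag yourself but do not resolve. Your proposed fix is to ``choose the sections $\sigma_i$ so that the lift genuinely lands on $y$,'' but this is not a choice available to you: the $\sigma_i$ are given, and once the path $\sigma_i(q(x),q(y))$ and the starting point $x$ are fixed, the endpoint of the lift is determined and is some $g_i(x,y)\cdot y$ with $g_i(x,y)\in\pi$ locally constant but generally not equal to $e$. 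Your fallback, that ``the square relating $\paths{X}\to X\times X$ and $\paths{X/\pi}\to X/\pi\times X/\pi$ is a pullback up to the covering,'' is false as stated. The square
\[
\begin{tikzcd}
\paths{X}\ar[r]\ar[d,"\eval{X}"'] & \paths{X/\pi}\ar[d,"\eval{X/\pi}"]\\
X\times X\ar[r,"q\times q"'] & X/\pi\times X/\pi
\end{tikzcd}
\]
is \emph{not} Cartesian: a point of the fibre product is a pair $\big((x,y),\delta\big)$ with $\delta(0)=q(x)$, $\delta(1)=q(y)$, and the unique lift of $\delta$ starting at $x$ ends at some element of $\pi y$ which need not be $y$. (The square \emph{is} Cartesian over $X\times X\to X\times_\pi X$, using the induced map $\paths{X/\pi}\cong\paths{X}/\pi\to X\times_\pi X$, but that is a different base change and does not directly give you sections over $(q\times q)^{-1}(V_i)$.)

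Your last suggestion, lifting $\sigma_i$ through the covering $\paths{X}\to\paths{X/\pi}$, also does not close the gap. First, a lift $\tilde\sigma_i\colon V_i\to\paths{X}$ exists only when $\sigma_{i*}\pi_1(V_i)$ is trivial in $\pi_1(\paths{X/\pi})\cong\pi$, which is not automatic. Second, even when $V_i$ is simply connected and the lift exists, $\eval{X}\circ\tilde\sigma_i$ is a section of $q\times q$ over $V_i$ landing in a single sheet $S\subseteq U_i$; taking the $\Delta\pi$-translates of the resulting section of $\eval{X}$ yields sections over $\bigcup_{g\in\pi}(g,g)S$, which is $\Delta\pi$-invariant but not $(\pi\times\pi)$-invariant, and covers only the ``diagonal'' sheets of $U_i$ over $V_i$. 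So you would not obtain sections over the whole $(\pi\times\pi)$-invariant set $U_i$, which is what the definition of $\TC^*_\pi$ requires. The paper itself signals that the actual argument leans on the homotopy lifting property for both covering maps $X\times X\to X\times_\pi X$ and $X\times_\pi X\to X/\pi\times X/\pi$ (factoring $\eval{X/\pi}$ as the fibration $\paths{X/\pi}\to X\times_\pi X$ followed by the second covering), and this extra structure is exactly what is needed to handle the sheets on which the naive lift misses $y$. As it stands, your proposal has a genuine gap here.
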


The conditions on the action in Proposition \ref{prop:strongcovering} are to ensure that the maps $X\times X\to X\times_\pi X$ and $X\times_\pi X\to X/\pi\times X/\pi$ are covering maps, and the homotopy lifting property for covering maps is used heavily in the proof. Even for a proper free action of a compact Lie group $G$ on a simply-connected space $X$, the inequality $\TC_G^*(X)\leq \TC^G(X)$ need not hold. For example, let $G$ be a non-contractible simply-connected Lie group (such as $S^3$) acting on itself by translation. Then $\TC^G(G)=\TC(G/G)=0$ by Theorem \ref{thm:TC^Gfree}, while $\TC^*_G(G)\geq\TC_G(G)\geq\TC(G)>0$.  

\begin{rem} In \cite{PaulSen}, Paul and Sen have defined the \emph{sequential strongly equivariant topological complexity} $\TC_{r,G}^*(X)$ for a $G$-space $X$ and $r\ge2$, and proved generalizations of many of the results in this section.
\end{rem}

\section{Effective topological complexity}\label{S:TCeff}

The fourth and final version of topological complexity with symmetry we shall consider is the \emph{effective topological complexity} of B\l{}aszczyk and Kaluba \cite{BlaKal}. Although it shares some commonalities with the invariant topological complexity, the effective topological complexity is quite different from the other three in that it does not require motion planners to be equivariant. Instead the symmetries are used only to reduce the complexity of the motion planning task.

Given a $G$-space $X$ and natural number $n\geq 1$, let
\[
\mathcal{P}_n(X):=\{(\gamma_1,\ldots, \gamma_n)\in \paths{X}^n \mid G\gamma_i(1)=G\gamma_{i+1}(0)\mbox{ for }i=1,\ldots ,n-1\}.
\]
Note that $\mathcal{P}_1(X)=\paths{X}$ and $\mathcal{P}_2(X)=\paths{X}\times_{X/G}\paths{X}$ is the space of broken paths appearing in the definition of $\TC^G(X)$. We think of $\mathcal{P}_n(X)$ as paths in $X$ that are continuous except for at $n-1$ points, where they are allowed to ``jump'' to a point in the same $G$-orbit. The evaluation map $\evaln{X}:\mathcal{P}_n(X)\to X\times X$ defined by $\evaln{X}(\gamma_1,\ldots, \gamma_n)=\big(\gamma_1(0),\gamma_n(1)\big)$ is shown to be a fibration in \cite{BlaKal}.

\begin{defn}[{\cite[Definition 3.1]{BlaKal}}]
Given a $G$-space $X$ and natural number $n\ge 1$, define $\TC^{G,n}(X)$ to be the minimal integer $n$ such that $X\times X$ can be covered by open sets $U_0,\ldots , U_k$, each admitting a continuous map $s_i:U_i\to \mathcal{P}_n(X)$ such that $\evaln{X}\circ s_i=\iota_{U_i}:U_i\hookrightarrow X\times X$. In short, $\TC^{G,n}(X)=\secat(\evaln{X})$.
\end{defn}

It is clear that $\TC^{G,1}(X)=\TC(X)$. Since the maps $s_i$ are not required to be $(G\times G)$-equivariant, we also have $\TC^{G,2}(X)\leq\TC^G(X)$.

\begin{lem}[{\cite[Lemma 3.2]{BlaKal}}]
One has $\TC^{G,n+1}(X)\leq \TC^{G,n}(X)$ for all $n\geq 1$.
\end{lem}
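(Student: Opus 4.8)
The claim is the monotonicity $\TC^{G,n+1}(X)\leq\TC^{G,n}(X)$, and the natural strategy is to exhibit a continuous map $j_n\colon\mathcal{P}_n(X)\to\mathcal{P}_{n+1}(X)$ over $X\times X$, i.e.\ satisfying $\evaln{X}=\pi_{n+1,\bullet}\circ j_n$ (with the obvious notational caveat that $\pi_{n+1,\bullet}$ denotes the evaluation map out of $\mathcal{P}_{n+1}(X)$). Given such a $j_n$, any local section $s_i\colon U_i\to\mathcal{P}_n(X)$ for $\pi_{n,\bullet}$ composes to a local section $j_n\circ s_i\colon U_i\to\mathcal{P}_{n+1}(X)$ for $\pi_{n+1,\bullet}$ over the same open set $U_i$, so a covering of $X\times X$ by $k+1$ such sets witnessing $\TC^{G,n}(X)\leq k$ immediately witnesses $\TC^{G,n+1}(X)\leq k$. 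Hence the whole proof reduces to constructing $j_n$.

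\textbf{Construction of $j_n$.} The idea is simply to insert a constant path. Given $(\gamma_1,\ldots,\gamma_n)\in\mathcal{P}_n(X)$, let $c$ denote the constant path at $\gamma_n(1)$ and set
\[
j_n(\gamma_1,\ldots,\gamma_n):=(\gamma_1,\ldots,\gamma_n,c).
\]
One must check this lands in $\mathcal{P}_{n+1}(X)$: the first $n-1$ matching conditions $G\gamma_i(1)=G\gamma_{i+1}(0)$ are inherited, and the new condition $G\gamma_n(1)=Gc(0)$ holds trivially since $c(0)=\gamma_n(1)$. Continuity of $j_n$ is routine: the assignment $(\gamma_1,\ldots,\gamma_n)\mapsto\gamma_n(1)$ is continuous (evaluation), and sending a point of $X$ to the constant path at it is continuous $X\to\paths{X}$ in the compact-open topology; the tuple map is then continuous coordinatewise. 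Finally, $\pi_{n+1,\bullet}(j_n(\gamma_1,\ldots,\gamma_n))=(\gamma_1(0),c(1))=(\gamma_1(0),\gamma_n(1))=\evaln{X}(\gamma_1,\ldots,\gamma_n)$, as required.

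\textbf{Remarks on difficulty.} There is essentially no obstacle here: the argument is the equivariant/effective analogue of the elementary fact that sectional category does not increase when a fibration is replaced by one that factors through it, and the map $j_n$ is the evident ``do nothing on the last leg'' inclusion. The only points requiring a moment's care are (i) verifying the $G$-orbit matching condition at the newly created break point, which is automatic because we append a genuine (constant) path rather than a jump, and (ii) the compact-open continuity of the constant-path map, which is standard. One could equivalently append the constant path at the \emph{start}, yielding a map into the first slot; either choice works. No hypotheses beyond those implicit in the definitions are needed, which matches the fact that the cited lemma is stated with none.
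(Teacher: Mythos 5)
Your proof is correct, and since the paper simply cites \cite[Lemma 3.2]{BlaKal} without reproducing an argument, the relevant comparison is with the standard proof one would expect there: constructing a continuous map $j_n\colon\mathcal{P}_n(X)\to\mathcal{P}_{n+1}(X)$ over $X\times X$ by appending (or prepending) a constant path, then pulling back local sections. That is exactly what you do, the verification of the orbit-matching condition and of $\pi_{n+1,X}\circ j_n=\evaln{X}$ is correct, and the continuity remarks are adequate; no gap.
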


Thus $\{\TC^{G,n}(X)\}_{n\ge1}$ is a decreasing sequence of integers, bounded below by $0$. It follows that there exists $n_0$ such that $\TC^{G,n}(X)=\TC^{G,n_0}(X)$ for all $n\ge n_0$.

\begin{defn}[{\cite[Definition 3.4]{BlaKal}}]
Let $n_0\in \N$ be such that $\TC^{G,n}(X)=\TC^{G,n_0}(X)$ for all $n\ge n_0$. The \emph{effective topological complexity} of the $G$-space $X$ is defined to be $\TC^{G,\infty}(X):=\TC^{G,n_0}(X)$.
\end{defn}

\begin{prop}[{\cite[Theorem 3.3]{BlaKal}}]
If $X\simeq_G Y$, then $\TC^{G,n}(X)=\TC^{G,n}(Y)$ for all $n\in \N$. In particular, the effective topological complexity is $G$-homotopy invariant.
\end{prop}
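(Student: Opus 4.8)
The plan is to mimic the classical proof that $\TC$ is a homotopy invariant, carried out one level down at the fibration $\evaln{X}\colon\mathcal{P}_n(X)\to X\times X$, whose sectional category is $\TC^{G,n}(X)$ by definition. The first ingredient is the (evident) functoriality of the construction $X\mapsto\mathcal{P}_n(X)$ on $G$-maps: if $\phi\colon X\to Y$ is a $G$-map and $(\gamma_1,\dots,\gamma_n)\in\mathcal{P}_n(X)$, then $\gamma_{i+1}(0)=g\gamma_i(1)$ for some $g\in G$ forces $(\phi\gamma_{i+1})(0)=g(\phi\gamma_i)(1)$, so $(\phi\gamma_1,\dots,\phi\gamma_n)$ again satisfies the orbit conditions and lies in $\mathcal{P}_n(Y)$. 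Post-composition by $\phi$ thus defines a continuous map $\mathcal{P}_n(\phi)\colon\mathcal{P}_n(X)\to\mathcal{P}_n(Y)$ with $\evaln{Y}\circ\mathcal{P}_n(\phi)=(\phi\times\phi)\circ\evaln{X}$, and $\mathcal{P}_n(\psi\circ\phi)=\mathcal{P}_n(\psi)\circ\mathcal{P}_n(\phi)$, $\mathcal{P}_n(\mathrm{id}_X)=\mathrm{id}$.

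Now fix a $G$-homotopy equivalence: $G$-maps $\phi\colon X\to Y$, $\psi\colon Y\to X$ and $G$-homotopies $H\colon\mathrm{id}_X\simeq_G\psi\circ\phi$, $K\colon\mathrm{id}_Y\simeq_G\phi\circ\psi$ (only the underlying plain homotopies will be used). I claim $\TC^{G,n}(Y)\le\TC^{G,n}(X)$. Suppose $X\times X=U_0\cup\dots\cup U_k$ with continuous sections $s_i\colon U_i\to\mathcal{P}_n(X)$ of $\evaln{X}$. Set $V_i:=(\psi\times\psi)^{-1}(U_i)$; these are open and cover $Y\times Y$. For $(y,y')\in V_i$, write $\mathcal{P}_n(\phi)\bigl(s_i(\psi y,\psi y')\bigr)=(\delta_1,\dots,\delta_n)$, a broken path in $Y$ from $\phi\psi(y)$ to $\phi\psi(y')$, and define
\[
\sigma_i(y,y'):=\bigl(K(y,-)\ast\delta_1,\ \delta_2,\ \dots,\ \delta_{n-1},\ \delta_n\ast\overline{K(y',-)}\bigr),
\]
where $K(y,-)$ is the path $t\mapsto K(y,t)$ from $y$ to $\phi\psi(y)$ and $\overline{K(y',-)}$ its reverse. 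Prepending an honest path to $\delta_1$ and appending one to $\delta_n$ changes neither any intermediate endpoint nor the number of ``jumps'', so $\sigma_i(y,y')\in\mathcal{P}_n(Y)$; moreover $\evaln{Y}\circ\sigma_i=\iota_{V_i}$ and $\sigma_i$ is continuous (being built from $s_i$, $\mathcal{P}_n(\phi)$ and path concatenation). Hence $\TC^{G,n}(Y)\le k$. Interchanging the roles of $X$ and $Y$ (using $\psi$, $H$ in place of $\phi$, $K$) yields the reverse inequality, so $\TC^{G,n}(X)=\TC^{G,n}(Y)$ for every $n$. The assertion about $\TC^{G,\infty}$ is then immediate: picking $n_0$ past the point where both stabilising sequences $\{\TC^{G,n}(X)\}$ and $\{\TC^{G,n}(Y)\}$ are constant gives $\TC^{G,\infty}(X)=\TC^{G,n_0}(X)=\TC^{G,n_0}(Y)=\TC^{G,\infty}(Y)$.

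I do not anticipate a real obstacle; the only point needing care is the bookkeeping in the middle paragraph---checking that grafting the correction paths from $K$ onto the ends of the broken path $\mathcal{P}_n(\phi)(s_i(\psi y,\psi y'))$ produces an element of $\mathcal{P}_n(Y)$ (no new jump is introduced and the surviving orbit conditions still hold) and that $\sigma_i$ is continuous into $\mathcal{P}_n(Y)$ with its subspace topology. If one prefers to avoid this explicit construction, an alternative is to argue abstractly: $\evaln{X}$ is a fibration, $\phi\times\phi\colon X\times X\to Y\times Y$ is an ordinary homotopy equivalence, $\mathcal{P}_n(\phi)$ factors $\mathcal{P}_n(X)$ through the pullback $(\phi\times\phi)^{*}\mathcal{P}_n(Y)$ over $X\times X$, and one then invokes the standard facts that $\secat$ never increases under pullback and is unchanged by pullback along a homotopy equivalence.
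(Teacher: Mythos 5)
Your proof is correct, and it is the natural adaptation of the classical argument that $\TC$ is a homotopy invariant; the alternative abstract argument via pullbacks of fibrations and the behaviour of $\secat$ under pullback is also sound. The paper itself does not reproduce a proof of this proposition---it simply cites \cite[Theorem 3.3]{BlaKal}---so there is no internal proof to compare against, but grafting the correction paths $K(y,-)$ and $\overline{K(y',-)}$ onto the outer ends of the broken path $\mathcal{P}_n(\phi)\bigl(s_i(\psi y,\psi y')\bigr)$ is the expected argument. One point worth stating explicitly, which you observed in passing: the $G$-equivariance of the homotopies $H$ and $K$ is never used, because the sections $s_i$ in the definition of $\TC^{G,n}$ are not required to be equivariant (nor the sets $U_i$ to be invariant). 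What is genuinely needed is that $\phi$ and $\psi$ be $G$-maps, so that post-composition preserves the orbit conditions defining $\mathcal{P}_n(-)$. Thus your argument in fact yields the slightly stronger conclusion that $\TC^{G,n}(X)=\TC^{G,n}(Y)$ whenever there exist $G$-maps $\phi\colon X\to Y$, $\psi\colon Y\to X$ that are mutually inverse merely up to ordinary (not necessarily equivariant) homotopy. A cosmetic remark: your displayed formula for $\sigma_i$ should be read as $K(y,-)\ast\delta_1\ast\overline{K(y',-)}$ in the degenerate case $n=1$.
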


It is evident that 
\[
\TC^{G,\infty}(X)\leq \TC^{G,1}(X)=\TC(X)\leq \TC_G(X)\leq\TC_G^*(X).
\]
and that
\[
\TC^{G,\infty}(X)\leq\TC^{G,2}(X)\leq \TC^G(X).
\]
Hence effective topological complexity is less than or equal to all of the other equivariant topological complexities discussed in this chapter.

For free $G$-actions one can take $n_0=2$, giving $\TC^{G,\infty}(X)=\TC^{G,2}(X)$, as shown in \cite[Theorem 5.1]{BlaKal} (the assumption that $G$ be finite is unnecessary).

\begin{rem} In \cite{C-AGGI-Z} the authors revisit the definition of effective topological complexity, defining a variant which they denote $\TC_{\sf effv}^G(X)$, and which agrees with $\TC^{G,\infty}(X)$ for free actions. In essence, one replaces the broken path space $\mathcal{P}_n(X)$ by the space of tuples
\[
\{(\gamma_1,g_1,\ldots, g_{n-1},\gamma_n)\in (\paths{X}\times G)^{n-1}\times\paths{X} \mid g_i\gamma_i(1)=\gamma_{i+1}(0)\mbox{ for }i=1,\ldots, n-1\},
\]
thereby keeping track of the group element used to ``jump'' within the $G$-orbit at each break. This has the nice effect that the resulting sequence of sectional categories stabilizes at $n=2$ even for non-free actions, so that $\TC_{\sf effv}^G(X)$ can be defined simply to be the sectional category of the evaluation map $(\gamma_1,g_1,\gamma_2)\mapsto \big(\gamma_1(0),\gamma_2(1)\big)$. Following a suggestion of Pave\v{s}i\'{c} \cite{Pavesic}, they also define the \emph{effectual topological complexity}, denoted $\TC_{\sf effl}^G(X)$, to be the sectional category of the evaluation map $\paths{X}\to X\times (X/G)$ given by $\gamma\mapsto \big(\gamma(0), G\gamma(1)\big)$. It is shown in \cite[Theorem 1.1]{C-AGGI-Z} that if $G$ is a discrete group acting properly discontinuously on a Hausdorff space $X$ then
\[
\TC_{\sf effv}^G(X)\leq \TC_{\sf effl}^G(X) \leq \TC(X/G),
\]
and in fact both inequalities above can be strict, as happens for the antipodal involution on the $2$-torus \cite[Theorem 1.2]{C-AGGI-Z}.
\end{rem}

\begin{prop}[{\cite[Proposition 3.6]{BlaKal}}]
If  the $G$-space $X$ is $G$-contractible, then $\TC^{G,\infty}(X)=0$.
\end{prop}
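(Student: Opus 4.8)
The plan is to show that $G$-contractibility of $X$ forces all the sectional-category invariants $\TC^{G,n}(X)$ to vanish for $n$ sufficiently large, so that $\TC^{G,\infty}(X)=0$. Since the sequence $\{\TC^{G,n}(X)\}$ is non-increasing and bounded below by $0$, it suffices to produce a single $n$ with $\TC^{G,n}(X)=0$; I will argue that $n=2$ already works.

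By hypothesis there is a $G$-homotopy $H\co X\times[0,1]\to X$ from $\mathrm{id}_X$ to a map $c$ whose image lies in a single orbit $Gx_0$. First I would record the consequence that for every point $x\in X$ the endpoint $c(x)$ lies in $Gx_0$, and that the assignment $x\mapsto H(x,-)$ is a continuous family of paths $\gamma_x$ in $X$ with $\gamma_x(0)=x$ and $\gamma_x(1)=c(x)\in Gx_0$; moreover $\gamma_{gx}=g\gamma_x$ by equivariance of $H$. The key observation is that $c(x)$ and $c(y)$ lie in the same orbit for \emph{any} $x,y\in X$, since both lie in $Gx_0$. Hence for any $(x,y)\in X\times X$, the pair of paths $\big(\gamma_x,\,\overline{\gamma_y}\big)$ — where $\overline{\gamma_y}$ denotes the reverse path $t\mapsto \gamma_y(1-t)$, running from $c(y)$ to $y$ — satisfies $G\gamma_x(1)=G c(x) = Gx_0 = Gc(y) = G\overline{\gamma_y}(0)$, so this pair lies in $\mathcal{P}_2(X)$. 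The construction is visibly continuous in $(x,y)$, and its image under $\eval{2}$ is precisely $(\gamma_x(0),\overline{\gamma_y}(1))=(x,y)$. This produces a single global section $s\co X\times X\to\mathcal{P}_2(X)$ of $\eval{2}$ defined on all of $X\times X$ (so we may take $U_0=X\times X$, $k=0$), whence $\TC^{G,2}(X)=0$.

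Then I would conclude: since $0\le \TC^{G,\infty}(X)\le \TC^{G,2}(X)=0$ by the inequalities already recorded in the text ($\TC^{G,\infty}(X)\le\TC^{G,2}(X)$ following from the monotonicity lemma), we get $\TC^{G,\infty}(X)=0$, as required. Note that no connectivity or fixed-point hypotheses are needed, in contrast to the corresponding statements for $\TC_G$ — this matches the remark following the corollary in the excerpt that $G$-contractibility implies $\TC^{G}(X)=0$ without such assumptions, and here we are getting the even smaller invariant.

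The main obstacle is a purely book-keeping one: making sure the ``broken path'' $s(x,y)$ genuinely lands in $\mathcal{P}_2(X)$, i.e. verifying the orbit-matching condition $G\gamma_x(1)=G\overline{\gamma_y}(0)$ at the single break-point, and checking continuity of $(x,y)\mapsto(\gamma_x,\overline{\gamma_y})$ into the (pullback, compact-open) topology on $\mathcal{P}_2(X)$. Both are routine once one writes $\gamma_x = H(x,-)$ and uses the exponential law for the compact-open topology, together with the fact that path-reversal is continuous; the orbit condition reduces to the trivial fact that all the endpoints $c(x)$ lie in the one orbit $Gx_0$. No genuine difficulty arises, which is why I expect the author's proof (cited as \cite[Proposition 3.6]{BlaKal}) to be a line or two along exactly these lines.
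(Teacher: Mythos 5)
Your proof is correct and is essentially the canonical argument: the $G$-homotopy $H$ from $\mathrm{id}_X$ to a map $c$ landing in a single orbit $Gx_0$ hands you, for each $(x,y)$, a pair $(\gamma_x,\overline{\gamma_y})\in\mathcal{P}_2(X)$ (the break at $c(x),c(y)\in Gx_0$ satisfies the orbit condition), giving a global section of $\pi_{2,X}$, hence $\TC^{G,2}(X)=0$ and by monotonicity $\TC^{G,\infty}(X)=0$. This is the same route as in \cite[Proposition 3.6]{BlaKal}; the only thing you note but do not actually need is the equivariance $\gamma_{gx}=g\gamma_x$, since the sections in the definition of $\TC^{G,n}$ are not required to be equivariant.
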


The converse is false, as shown by the following example, which also illustrates that the effective topological complexity is not bounded below by the equivariant category. The problem of characterizing exactly which $G$-spaces have $\TC^{G,\infty}(X)=0$ is stated in \cite[Section 7.2]{BlaKal}, and appears to still be open.

\begin{ex}
Let $G$ be a discrete group, and let $EG$ be a contractible free $G$-space. Since $\TC^{G,\infty}(EG)\leq \TC(EG)=0$,
we have $\TC^{G,\infty}(EG)=0$. However the $G$-space $EG$ is not $G$-contractible. In fact
\[
\cat_G(EG)=\cat(EG/G)=\cat(BG)=\cld(G),
\]
where $\cld(G)$ denotes the cohomological dimension of $G$. This is infinite if $G$ is finite, and positive whenever $G$ is non-trivial.

This example also shows that $\TC^{G,\infty}(X)$ does not in general coincide with $\TC(X/G)$ for free $G$-actions.
\end{ex}

Since $\TC^{G,n}(X)=\secat(\evaln{X}:\mathcal{P}_n(X)\to X\times X)$, it admits a cohomological lower bound in the form of the nilpotency of the kernel of the map induced by $\evaln{X}$ in cohomology. Using transfer arguments, B\l{}aszczyk and Kaluba are able to prove the following results.

\begin{thm}[{\cite[Theorem 4.1]{BlaKal}}]\label{thm:zdcleff}
Let $G$ be a finite group and let $X$ be a $G$-CW complex. If $\Bbbk$ is a field of characteristic zero or $p$ prime to $|G|$, then 
\[
\TC^{G,\infty}(X)\geq \nil\ker\big(\cup:H^*(X/G;\Bbbk)\otimes H^*(X/G;\Bbbk)\to H^*(X/G;\Bbbk)\big).
\]
\end{thm}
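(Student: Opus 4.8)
The plan is to combine the standard cohomological lower bound for sectional category with a transfer argument that imports zero-divisors from the orbit space. I would first note that it suffices to prove the stated inequality with $\TC^{G,\infty}(X)$ replaced by $\TC^{G,n}(X)$ for an \emph{arbitrary} $n\ge1$, since $\TC^{G,\infty}(X)=\TC^{G,n_0}(X)$ for a suitable $n_0$. So fix $n\ge1$. Because $\evaln{X}\colon\mathcal{P}_n(X)\to X\times X$ is a fibration (as recorded above) and $\TC^{G,n}(X)=\secat(\evaln{X})$, the cohomological bound for sectional category gives
\[
\TC^{G,n}(X)\ \ge\ \nil\ker\big(\evaln{X}^{\,*}\colon H^*(X\times X;\Bbbk)\longrightarrow H^*(\mathcal{P}_n(X);\Bbbk)\big).
\]
Writing $q\colon X\to X/G$ for the orbit projection and $\Delta_{X/G}\colon X/G\to X/G\times X/G$ for the diagonal, the K\"unneth isomorphism over $\Bbbk$ identifies $\nil\ker\big(\cup\colon H^*(X/G;\Bbbk)^{\otimes2}\to H^*(X/G;\Bbbk)\big)$ with $\nil\ker\big(\Delta_{X/G}^{\,*}\colon H^*(X/G\times X/G;\Bbbk)\to H^*(X/G;\Bbbk)\big)$, so it is enough to show $\nil\ker(\evaln{X}^{\,*})\ge\nil\ker(\Delta_{X/G}^{\,*})$.

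The key geometric observation is that projecting a broken path to the orbit space heals its breaks. Concretely, sending $(\gamma_1,\dots,\gamma_n)\in\mathcal{P}_n(X)$ to the concatenation of the honest paths $t\mapsto G\gamma_i(t)$ in $X/G$ defines a continuous map $c_n\colon\mathcal{P}_n(X)\to P(X/G)$ — well defined precisely because $G\gamma_i(1)=G\gamma_{i+1}(0)$ — and, writing $\pi_{X/G}\colon P(X/G)\to X/G\times X/G$ for the endpoint fibration, the square
\[
\begin{tikzcd}
\mathcal{P}_n(X) \ar[r,"c_n"] \ar[d,"\evaln{X}"'] & P(X/G) \ar[d,"\pi_{X/G}"] \\
X\times X \ar[r,"q\times q"'] & X/G\times X/G
\end{tikzcd}
\]
commutes. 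Since the inclusion of constant paths $X/G\hookrightarrow P(X/G)$ is a deformation retraction carrying $\pi_{X/G}$ to $\Delta_{X/G}$, we have $\ker\pi_{X/G}^{\,*}=\ker\Delta_{X/G}^{\,*}$, so commutativity of the square yields $(q\times q)^*\big(\ker\Delta_{X/G}^{\,*}\big)\subseteq\ker\big(\evaln{X}^{\,*}\big)$.

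The algebraic input is the transfer. The map $q\times q$ is the orbit projection of the $(G\times G)$-CW complex $X\times X$, and since $|G|$ — hence $|G\times G|=|G|^2$ — is invertible in $\Bbbk$, the transfer of this finite cover exhibits $(q\times q)^*$ as a split monomorphism (indeed as an isomorphism onto $H^*(X\times X;\Bbbk)^{G\times G}$). Therefore, given $u_1,\dots,u_k\in\ker\Delta_{X/G}^{\,*}$ with $u_1\cdots u_k\ne0$, the classes $(q\times q)^*u_1,\dots,(q\times q)^*u_k$ all lie in $\ker(\evaln{X}^{\,*})$, and $(q\times q)^*$ being an injective ring homomorphism, their product equals $(q\times q)^*(u_1\cdots u_k)\ne0$. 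Hence $\nil\ker(\evaln{X}^{\,*})\ge\nil\ker(\Delta_{X/G}^{\,*})$, which together with the first step proves the theorem.

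The main obstacle is exactly this transfer step: one must know that for a $(G\times G)$-CW complex with $|G|$ invertible in $\Bbbk$ the orbit projection is injective on $\Bbbk$-cohomology (equivalently, identifies it with the ring of invariants). This is precisely where the $G$-CW hypothesis and the hypothesis on the characteristic of $\Bbbk$ are used in an essential way — drop either and the bound fails, as one sees already from the antipodal $C_2$-action on $S^n$ with $\mathbb{F}_2$-coefficients. The remaining ingredients (fibrancy of $\evaln{X}$, commutativity of the square, and the deformation retraction of $P(X/G)$ onto $X/G$) are routine.
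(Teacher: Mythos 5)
Your proof is correct, and it follows the same strategy that B\l{}aszczyk and Kaluba use: combine the standard cohomological lower bound for $\secat(\evaln{X})$ with the transfer argument showing that $(q\times q)^*$ is injective on $\Bbbk$-cohomology when $|G|$ is invertible, and note that the projection $c_n\colon \mathcal{P}_n(X)\to P(X/G)$ ``heals'' broken paths so that zero-divisors for $X/G$ pull back to elements of $\ker(\evaln{X}^*)$. This matches the survey's description of the original argument as using transfer arguments together with the sectional-category lower bound, and the way you organize the square relating $\evaln{X}$ to the endpoint fibration of $X/G$ is exactly the natural geometric implementation of that idea.
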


\begin{cor}[{\cite[Corollary 4.2]{BlaKal}}]\label{cor:eff=TC}
Let $G$ be a finite group and let $\Bbbk$ be a field of characteristic zero or $p$ prime to $|G|$. If $G$ acts trivially on $H^*(X;\Bbbk)$ and $$\TC(X)=\nil\ker\big(\cup: H^*(X;\Bbbk)\otimes H^*(X;\Bbbk)\to H^*(X;\Bbbk)\big),$$ then $\TC^{G,\infty}(X)=\TC(X)$.
\end{cor}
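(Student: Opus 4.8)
The plan is to exploit the inequality chain already recorded in the text, namely $\TC^{G,\infty}(X)\le\TC^{G,1}(X)=\TC(X)$, so that everything reduces to establishing the reverse inequality $\TC^{G,\infty}(X)\ge\TC(X)$. For this I would apply \thmref{thm:zdcleff}, which gives
\[
\TC^{G,\infty}(X)\ \ge\ \nil\ker\big(\cup\colon H^*(X/G;\Bbbk)\otimes H^*(X/G;\Bbbk)\to H^*(X/G;\Bbbk)\big),
\]
and then translate the right-hand side, which is phrased in terms of $H^*(X/G;\Bbbk)$, into the corresponding quantity for $H^*(X;\Bbbk)$ using the two hypotheses ($\operatorname{char}\Bbbk$ prime to $|G|$, and $G$ acting trivially on $H^*(X;\Bbbk)$).

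The crux is to identify $H^*(X/G;\Bbbk)$ with $H^*(X;\Bbbk)$ as \emph{graded rings}. First I would invoke the classical fact that when $|G|$ is invertible in $\Bbbk$ — which is exactly the hypothesis on the characteristic — the quotient map $q\colon X\to X/G$ induces a ring isomorphism $q^*\colon H^*(X/G;\Bbbk)\xrightarrow{\ \cong\ }H^*(X;\Bbbk)^G$ onto the subring of $G$-invariants. (One route for a $G$-CW complex: the natural map $EG\times_G X\to X/G$ is a $\Bbbk$-cohomology isomorphism, since its point-inverses are the spaces $BG_x$ with trivial reduced $\Bbbk$-cohomology as $|G_x|$ divides $|G|$; combined with $H^*_G(X;\Bbbk)\cong H^*(X;\Bbbk)^G$ coming from averaging over $G$, this yields the claim, and the isomorphism is multiplicative because it is induced by the map $q$.) The hypothesis that $G$ acts trivially on $H^*(X;\Bbbk)$ now forces $H^*(X;\Bbbk)^G=H^*(X;\Bbbk)$, so $q^*$ is a ring isomorphism $H^*(X/G;\Bbbk)\cong H^*(X;\Bbbk)$.

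To conclude, I would use naturality of cup products: $q^*\otimes q^*$ carries the ideal $\ker\big(\cup\colon H^*(X/G;\Bbbk)^{\otimes 2}\to H^*(X/G;\Bbbk)\big)$ isomorphically onto $\ker\big(\cup\colon H^*(X;\Bbbk)^{\otimes 2}\to H^*(X;\Bbbk)\big)$, so these two ideals of isomorphic rings have the same nilpotency. Substituting into the displayed bound from \thmref{thm:zdcleff} and using the hypothesis $\TC(X)=\nil\ker\big(\cup\colon H^*(X;\Bbbk)^{\otimes 2}\to H^*(X;\Bbbk)\big)$ gives $\TC^{G,\infty}(X)\ge\TC(X)$, which combined with $\TC^{G,\infty}(X)\le\TC(X)$ finishes the proof.

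I do not expect a serious obstacle here, since \thmref{thm:zdcleff} carries the weight of the argument; the only point needing care is the clean statement and \emph{multiplicative} naturality of the isomorphism $H^*(X/G;\Bbbk)\cong H^*(X;\Bbbk)^G$ for a possibly non-free finite group action, i.e. checking that the transfer/averaging argument applies at the required level of generality and respects the ring structure — but this is standard.
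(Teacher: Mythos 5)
Your argument is correct and is the natural one: the upper bound $\TC^{G,\infty}(X)\leq\TC(X)$ is immediate from the definitions, and the lower bound follows from Theorem~\ref{thm:zdcleff} once one identifies $H^*(X/G;\Bbbk)$ with $H^*(X;\Bbbk)^G = H^*(X;\Bbbk)$ as rings via $q^*$, using that $|G|$ is invertible in $\Bbbk$ and the action on cohomology is trivial. The survey states this corollary without proof (citing \cite[Corollary 4.2]{BlaKal}), but your reconstruction matches the argument given there; the only small thing to flag is that, since you invoke Theorem~\ref{thm:zdcleff}, you are implicitly assuming $X$ is a $G$-CW complex, a hypothesis the corollary as stated leaves tacit.
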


B\l{}aszczyk and Kaluba compute the effective topological complexity of all linear $C_p$-actions on spheres, and some non-linear ones. The result quoted below follows easily from Corollary \ref{cor:eff=TC} on taking rational cohomology. For the remaining cases we refer to \cite[Corollary 5.10]{BlaKal}.

\begin{prop}[{\cite[Proposition 5.3]{BlaKal}}]
Suppose the cyclic group $C_p$ acts on the sphere $S^n$, where $p$ is prime and $n\ge1$. If $p>2$, or if $p=2$ and the action is orientation preserving, then
\[
\TC^{C_p,\infty}(S^n) = \TC(S^n) = \begin{cases} 1 & \mbox{ if $n$ odd,} \\ 2 & \mbox{ if $n$ even.} \end{cases}
\]
\end{prop}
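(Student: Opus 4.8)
The plan is to derive this as a direct consequence of Corollary~\ref{cor:eff=TC}, applied with $G=C_p$, $X=S^n$ (which we may assume carries a $C_p$-CW structure) and $\Bbbk=\Q$. Since $\Q$ has characteristic zero it is automatically of characteristic prime to $|C_p|=p$, so the coefficient hypothesis of the corollary is satisfied, and it remains only to check the two remaining hypotheses: that $C_p$ acts trivially on $H^*(S^n;\Q)$, and that $\TC(S^n)$ equals the nilpotency of the kernel of the rational cup product $H^*(S^n;\Q)\otimes H^*(S^n;\Q)\to H^*(S^n;\Q)$.

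First I would verify triviality of the action on $H^*(S^n;\Q)$. In degree $0$ this is automatic since $S^n$ is path-connected, so the only concern is $H^n(S^n;\Q)\cong\Q$. A homeomorphism of $S^n$ acts on $H^n(S^n;\Q)$ by multiplication by its degree $\pm1$, so the $C_p$-action on $H^n(S^n;\Q)$ is described by a homomorphism $C_p\to\{\pm1\}$. When $p$ is odd the only such homomorphism is trivial; when $p=2$, triviality of this action is exactly the hypothesis that the involution be orientation preserving. This is the sole point at which the hypotheses ``$p>2$'' and ``orientation preserving'' are used: an orientation-reversing involution acts by $-1$ on top cohomology, the corollary no longer applies, and indeed that case is genuinely different and is treated separately in \cite[Corollary 5.10]{BlaKal}.

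Second I would recall the classical value of $\TC(S^n)$ and the fact that it is already forced by the rational zero-divisor cup-length lower bound. Let $a$ generate $H^n(S^n;\Q)$, so $a^2=0$, and set $\bar a:=a\otimes 1-1\otimes a$, which lies in $\ker\bigl(\Delta^*:H^*(S^n\times S^n;\Q)\to H^*(S^n;\Q)\bigr)$. A short computation with graded commutativity gives $\bar a^2=-(1+(-1)^n)\,a\otimes a$ and $\bar a^3=0$; since $a\otimes a\neq 0$ in $H^{2n}(S^n\times S^n;\Q)$, the zero-divisor cup length over $\Q$ is $2$ for $n$ even and $1$ for $n$ odd. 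As $\TC(S^n)\le 2$ in general and $\TC(S^n)\le 1$ for $n$ odd (a nowhere-zero tangent field on an odd sphere produces a two-chart motion planner), these lower bounds are sharp, so $\TC(S^n)=\nil\ker(\cup)$ equals $1$ for $n$ odd and $2$ for $n$ even. Corollary~\ref{cor:eff=TC} then yields $\TC^{C_p,\infty}(S^n)=\TC(S^n)$, as claimed. I do not anticipate a real obstacle in this argument; the only subtle point, and the reason for the hypotheses, is the behaviour of the $C_p$-action on top cohomology.
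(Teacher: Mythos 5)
Your proposal is correct and follows exactly the route the paper indicates: the text says the proposition ``follows easily from Corollary~\ref{cor:eff=TC} on taking rational cohomology,'' which is precisely your argument. Your verification that $C_p$ acts trivially on $H^*(S^n;\Q)$ under the stated hypotheses, and your rational zero-divisor cup-length computation $\bar a^2=-(1+(-1)^n)\,a\otimes a$, $\bar a^3=0$, are both correct and fill in the details the paper leaves implicit.
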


As mentioned above, for free actions $$\TC^{G,\infty}(X)=\TC^{G,2}(X)=\secat(\pi_{2,X}:\mathcal{P}_2(X)\to X\times X).$$ For free actions with $G$ finite, one easily checks that there is a commutative diagram
\[
\begin{tikzcd}
\mathcal{P}_2(X) \arrow[rd,"\pi_{2,X}"] & \\
\daleth(X) \arrow[u,"\simeq"] \arrow[hook,r,"j"] & X\times X
\end{tikzcd}
\]
in which the vertical arrow is a homotopy equivalence. This motivates the following definition.

\begin{defn}[{\cite[Definition 7.3]{C-AG}}]
Let  $j:\daleth(X)\hookrightarrow X\times X$ be the inclusion of the saturated diagonal. An element in the kernel of $j^*:H^*(X\times X)\to H^*(\daleth(X))$ is called an \emph{effective zero-divisor}, where we take cohomology with arbitrary, possibly twisted, coefficients.
\end{defn}

\begin{lem}
Let $R$ be a commutative ring, and let $X$ be a free $G$-space with $G$ finite. Then $\TC^{G,\infty}(X)$ is bounded below by the \emph{effective zero-divisors cup-length} with $R$ coefficients. In symbols,
\[
\TC^{G,\infty}(X)\geq\nil\ker\big( j^*: H^*(X\times X;R)\to H^*(\daleth(X);R)\big).
\]
\end{lem}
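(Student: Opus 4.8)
The plan is to reduce the statement to the standard cohomological lower bound for sectional category, applied to the fibration $\pi_{2,X}:\mathcal{P}_2(X)\to X\times X$ whose sectional category is $\TC^{G,\infty}(X)$ for free actions. Recall the general principle that if $p:E\to Y$ is a fibration, then $\secat(p)\geq\nil\ker\big(p^*:H^*(Y;R)\to H^*(E;R)\big)$: given a section of $p$ over each of $k$ open sets $U_i$ covering $Y$, each class $p^*(z)$ pulled back from a class $z$ in the kernel restricts trivially to $U_i$, hence lies in the image of $H^*(Y,U_i;R)\to H^*(Y;R)$; a product of $k$ such relative classes lives in $H^*(Y,U_1\cup\cdots\cup U_k;R)=H^*(Y,Y;R)=0$ by naturality of relative cup products, so any such $k$-fold product of kernel elements vanishes. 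This is exactly "the usual argument" invoked in the proof of Proposition \ref{prop:lowerTC^*_G}.

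First I would record the commutative triangle displayed just before the Lemma: for a free action of a finite group $G$, the homotopy equivalence $\daleth(X)\xrightarrow{\simeq}\mathcal{P}_2(X)$ (coming from the fact that a broken path with one jump between points in the same orbit is, up to homotopy, just a pair $(x,gx)$) sits over $X\times X$, with the bottom edge being the inclusion $j:\daleth(X)\hookrightarrow X\times X$. Consequently $j=\pi_{2,X}\circ(\text{equivalence})$ up to homotopy, so the induced maps on cohomology satisfy $\ker(j^*)=\ker(\pi_{2,X}^*)$, for any coefficient system $R$ on $X\times X$ (twisted coefficients pull back along both maps compatibly since the square commutes up to homotopy over $X\times X$).

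Then I would combine: $\TC^{G,\infty}(X)=\TC^{G,2}(X)=\secat(\pi_{2,X})$, the first equality because $n_0=2$ for free actions (the result of \cite[Theorem 5.1]{BlaKal} quoted in the excerpt, with finiteness of $G$ unnecessary), and the second by definition. Applying the general sectional-category lower bound to $p=\pi_{2,X}$ and then rewriting $\ker(\pi_{2,X}^*)$ as $\ker(j^*)$ via the previous paragraph yields
\[
\TC^{G,\infty}(X)=\secat(\pi_{2,X})\geq\nil\ker\big(\pi_{2,X}^*\big)=\nil\ker\big(j^*:H^*(X\times X;R)\to H^*(\daleth(X);R)\big),
\]
which is the claim.

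I expect the only real subtlety to be the identification $\ker(j^*)=\ker(\pi_{2,X}^*)$, i.e. checking carefully that the displayed triangle genuinely commutes (up to homotopy over $X\times X$) and that the vertical map is a homotopy equivalence for every free action of a finite $G$, not merely a weak equivalence; for CW or ANR inputs this is routine, and one should note the homotopy equivalence need not itself be equivariant—only the triangle over $X\times X$ is needed. Everything else is formal: the sectional-category lower bound is entirely standard, handles twisted coefficients without change (the relative cup product $H^*(Y,A;R)\otimes H^*(Y,B;R)\to H^*(Y,A\cup B;R)$ is defined for any single coefficient ring/system $R$ on $Y$), and requires no transfer argument, in contrast to Proposition \ref{prop:lowerTC^*_G} and Theorem \ref{thm:zdcleff} where the passage to orbit-space or subgroup cohomology forces the use of transfers.
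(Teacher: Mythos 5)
Your proof is correct and is exactly the argument the paper intends: the paper does not write out a proof (it says the lemma is ``implicit in'' \cite{C-AG}), but it displays precisely the commutative triangle you use and has already recorded the identities $\TC^{G,\infty}(X)=\TC^{G,2}(X)=\secat(\pi_{2,X})$ for free actions, so that combining the standard cohomological lower bound for $\secat$ with the homotopy equivalence $\daleth(X)\simeq\mathcal{P}_2(X)$ over $X\times X$ is the intended route. Your supplementary remarks are accurate: the triangle in fact commutes strictly (send $(x,gx)$ to the pair of constant paths $(c_x,c_{gx})$, whose $\pi_{2,X}$-image is $(x,gx)$), the retraction $(\gamma_1,\gamma_2)\mapsto(\gamma_1(1),\gamma_2(0))$ together with the path-shrinking homotopy exhibits $\iota:\daleth(X)\to\mathcal{P}_2(X)$ as a genuine homotopy equivalence (not merely a weak one, with no CW hypothesis needed), and as you say this equivalence is not fiberwise over $X\times X$ — but you only need it to induce an isomorphism on cohomology so that $\ker(j^*)=\ker(\pi_{2,X}^*)$, and the argument passes unchanged to twisted coefficients pulled back from $X\times X$. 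You are also right that, unlike Proposition~\ref{prop:lowerTC^*_G} and Theorem~\ref{thm:zdcleff}, no transfer is needed here because everything stays upstairs over $X\times X$ rather than passing to an orbit or homotopy-orbit space.
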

 
 The above lemma is implicit in \cite{C-AG}, where the authors are motivated by potentially giving a new calculation of the topological complexity of non-orientable surfaces using effective topological complexity. Recall that the non-orientable surface of genus $g+1$ may be written as the orbit space $N_{g+1}=\Sigma_g/C_2$ of a free, orientation-reversing involution on $\Sigma_g$, the orientable surface of genus $g$. It follows that 
 \[
 \TC^{C_2,\infty}(\Sigma_g)\leq \TC^{C_2}(\Sigma_g)=\TC(\Sigma_g/C_2)=\TC(N_{g+1}),
 \]
 which raises the possibility of using the effective zero-divisors cup-length of $\Sigma_g$ to bound $\TC(N_{g+1})$ from below. In the low genus cases $g=0,1$ one has
 \[
\TC^{C_2,\infty}(\Sigma_g)=g+1<g+3=\TC(N_{g+1}),
\]
as follows from results of B\l{}aszczyk--Kaluba \cite{BlaKal} and Farber--Tabachnikov--Yuzvinsky \cite{FTY} in the $g=0$ case and Gonz\'alez--B\l{}aszczyk--Kaluba (unpublished) and Cohen--Vandembroucq \cite{CohenVan} in the $g=1$ case. For higher genera, the authors of \cite{C-AG} show the following using effective zero-divisors cup-length with $\Z$ coefficients. 

\begin{thm}[{\cite[Theorem 1.1]{C-AG}}]\label{thm:TCeffsurfaces}
Let $g\ge2$. For the antipodal involution on $\Sigma_g$, one has $\TC^{C_2,\infty}(\Sigma_g)\ge3$. Therefore, {\it a posteriori},
\[
3\leq \TC^{C_2,\infty}(\Sigma_g)\leq \TC(N_{g+1})=4.
\]
\end{thm}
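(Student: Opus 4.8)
\textbf{Proof proposal for Theorem~\ref{thm:TCeffsurfaces}.}

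The plan is to prove the nontrivial bound $\TC^{C_2,\infty}(\Sigma_g)\ge3$ for $g\ge2$ by exhibiting a nonzero triple product of effective zero-divisors, and then combine this with the already-quoted inequalities. Concretely, I would apply the Lemma immediately preceding the statement with $R=\Z$: it suffices to find classes $u_1,u_2,u_3\in\ker\big(j^*:H^*(\Sigma_g\times\Sigma_g;\Z)\to H^*(\daleth(\Sigma_g);\Z)\big)$ with $u_1u_2u_3\ne0$ in $H^*(\Sigma_g\times\Sigma_g;\Z)$. The upper bound $\TC^{C_2,\infty}(\Sigma_g)\le\TC(N_{g+1})=4$ comes for free from the chain of inequalities $\TC^{C_2,\infty}(\Sigma_g)\le\TC^{C_2}(\Sigma_g)=\TC(\Sigma_g/C_2)=\TC(N_{g+1})$ recalled just before the theorem, together with the Cohen--Vandembroucq computation $\TC(N_{g+1})=4$ for $g\ge2$. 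So the entire content is the lower bound.

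The key steps I would carry out are as follows. First, understand the saturated diagonal: for the antipodal (free, orientation-reversing) involution $\tau$ on $\Sigma_g$, we have $\daleth(\Sigma_g)=\Delta(\Sigma_g)\sqcup\Gamma_\tau(\Sigma_g)$, the disjoint union of the ordinary diagonal and the graph of $\tau$ (disjoint because the action is free). Hence $H^*(\daleth(\Sigma_g))\cong H^*(\Sigma_g)\oplus H^*(\Sigma_g)$, and the restriction $j^*$ is the pair of maps $(\Delta^*,\Gamma_\tau^*)$; a class $u\in H^*(\Sigma_g\times\Sigma_g)$ is an effective zero-divisor precisely when $\Delta^*(u)=0$ and $\Gamma_\tau^*(u)=0$. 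Second, build candidate effective zero-divisors out of ordinary zero-divisors: the ordinary TC zero-divisors $a_i\otimes1-1\otimes a_i$ (for a symplectic basis $a_i,b_i$ of $H^1(\Sigma_g;\Z)$) satisfy $\Delta^*=0$ automatically, so one only needs to correct for $\Gamma_\tau^*$, i.e.\ for the action of $\tau^*$ on $H^1$; since $\tau$ is orientation-reversing, $\tau^*$ acts on $H^2(\Sigma_g;\Z)=\Z$ by $-1$, and one tracks how it acts on $H^1$. Third, form a symmetrization/antisymmetrization of the standard zero-divisors (this is exactly the transfer-type construction appearing in Remark~\ref{rem:lowerTC^*_G} and in the B\l aszczyk--Kaluba transfer arguments): classes of the form $v_i:=(a_i\otimes1-1\otimes a_i)+\tau^*$-twisted correction, chosen so that $\Gamma_\tau^*(v_i)=0$ as well. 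Fourth, compute the triple cup product $v_1v_2v_3$ (or a suitable combination of three such classes) in $H^6(\Sigma_g^{\times2};\Z)$ using the K\"unneth formula and the intersection form of $\Sigma_g$, and check it is nonzero --- this is where $g\ge2$ enters, since one needs enough independent $1$-dimensional classes to make a length-$3$ product survive (for $g=0,1$ the bound indeed drops, as the displayed low-genus computation records).

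The main obstacle I expect is the fourth step: producing an explicit nonzero triple product of \emph{corrected} (effective) zero-divisors with $\Z$-coefficients. The difficulty is twofold. One must make the symmetrization precise enough that the corrected classes genuinely lie in $\ker\Gamma_\tau^*$ — this requires knowing $\tau^*$ on $H^1(\Sigma_g;\Z)$, not just that it reverses orientation, so some input about the specific antipodal model of $\Sigma_g$ (e.g.\ realizing $N_{g+1}=\Sigma_g/C_2$ and pulling back a basis) is needed. And one must then verify non-vanishing over $\Z$ rather than over a field; a clean way is to compute the product modulo a well-chosen prime, or to exhibit a surface (a $2$-cycle in $\Sigma_g\times\Sigma_g$) on which the triple product evaluates nontrivially. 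A secondary subtlety is that, since the transfer is not multiplicative, one cannot simply transfer a nonzero product of ordinary zero-divisors; one must choose the three effective zero-divisors and their product directly, which is the heart of what \cite{C-AG} does. Everything else — the structure of $\daleth$, the reduction to effective zero-divisors cup-length via the Lemma, and the upper bound — is routine given the results already in the excerpt.
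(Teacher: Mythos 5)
Your overall strategy is the right one, and it matches the route the survey attributes to \cite{C-AG}: reduce to the effective zero-divisors cup-length lemma for the free $C_2$-action, identify $\daleth(\Sigma_g)$ as the disjoint union $\Delta(\Sigma_g)\sqcup\Gamma_\tau(\Sigma_g)$ so that an effective zero-divisor is a class killed simultaneously by $\Delta^*$ and $\Gamma_\tau^*$, exhibit three such classes whose product is nonzero in $H^*(\Sigma_g\times\Sigma_g;\Z)$, and obtain the upper bound from the chain $\TC^{C_2,\infty}(\Sigma_g)\le\TC^{C_2}(\Sigma_g)=\TC(N_{g+1})=4$. You also correctly flag that the transfer is not multiplicative, so one cannot simply transfer a nonzero product, and that the real work is choosing the classes directly.

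There is, however, a concrete gap in your step 3. A degree-one class of the form $x\otimes 1 - 1\otimes x$ has $\Delta^*=0$ automatically, and $\Gamma_\tau^*(x\otimes 1-1\otimes x)=x-\tau^*x$; so the only degree-one effective zero-divisors produced by any ``$\tau^*$-twisted correction'' of the standard ones are exactly those with $\tau^*x=x$. But because $\tau$ is orientation-reversing, $\tau^*$ acts by $-1$ on $H^2(\Sigma_g;\Z)\cong\Z$, and hence for $\tau$-invariant $x,y\in H^1(\Sigma_g;\Z)$ one has $xy=\tau^*x\cdot\tau^*y=\tau^*(xy)=-xy$, so $xy=0$: the $\tau$-fixed subspace of $H^1(\Sigma_g;\Z)$ is isotropic (Lagrangian). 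Consequently any product of two of your symmetrized classes already collapses to $-\bar a_i\otimes\bar a_j+\bar a_j\otimes\bar a_i$, and the triple product vanishes identically. So the naive symmetrization/antisymmetrization construction you propose cannot produce the desired length-$3$ product; one must use more subtle classes, some in higher degree, and verifying that those are effective zero-divisors is exactly where \cite{C-AG} needs the ``non-trivial calculations in the group cohomology of orientable surface groups with twisted integral coefficients'' mentioned in the survey. (Also, the transfer in Remark~\ref{rem:lowerTC^*_G} goes from $H^*_G$ to $H^*_{G\times G}$ in the context of $\TC^*_G$; it is not the mechanism that produces effective zero-divisors for $\TC^{G,\infty}$, so the analogy you draw there is somewhat misleading.)
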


Although the above is proved by exhibiting a nonzero product in untwisted integral cohomology, showing that the three classes involved are effective zero-divisors requires some non-trivial calculations in the group cohomology of orientable surface groups (with twisted integral coefficients). The authors express optimism in \cite[Remark 7.7]{C-AG} that using effective zero-divisors with twisted coefficients might lead to a proof that $\TC^{C_2,\infty}(\Sigma_g)\ge4$ in sufficiently high genera, and hence to an alternative proof that $\TC(N_{g+1})=4$.

Finally in this section we discuss the product inequality for effective topological complexity, which is analogous to Theorem \ref{thm:productGH}.

\begin{thm}[{\cite[Theorem 6.1]{BlaKal}}]\label{thm:producteff}
Let $X$ be a paracompact $G$-space and $Y$ be a paracompact $H$-space, where $G$ and $H$ are topological groups. Give $X\times Y$ the product action of $G\times H$. Then
\[
\TC^{G\times H,\infty}(X\times Y)\leq \TC^{G,\infty}(X) + \TC^{H,\infty}(Y).
\]
\end{thm}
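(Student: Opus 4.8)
The plan is to prove the sharper, "finite-level" inequality
\[
\TC^{G\times H,n}(X\times Y)\ \leq\ \TC^{G,n}(X)+\TC^{H,n}(Y)\qquad\text{for every }n\geq 1,
\]
and then to recover the stated bound by letting $n$ be large. This last step is immediate: each of the sequences $\{\TC^{G,n}(X)\}_n$, $\{\TC^{H,n}(Y)\}_n$ and $\{\TC^{G\times H,n}(X\times Y)\}_n$ is non-increasing and bounded below, hence eventually constant (\cite[Lemma 3.2]{BlaKal}), so choosing $n$ past all three stabilization thresholds turns the displayed inequality into $\TC^{G\times H,\infty}(X\times Y)\leq\TC^{G,\infty}(X)+\TC^{H,\infty}(Y)$.

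The first step is to recognise the broken-path fibration of the product space as a product of broken-path fibrations. The exponential law gives a homeomorphism $\paths{(X\times Y)}\cong\paths{X}\times\paths{Y}$, hence $\paths{(X\times Y)}^n\cong\paths{X}^n\times\paths{Y}^n$; writing an $n$-tuple of paths $(\sigma_1,\dots,\sigma_n)$ in $X\times Y$ as $(\gamma_i,\delta_i)_{i=1}^n$ with $\gamma_i$ in $X$ and $\delta_i$ in $Y$, the key point is that the $(G\times H)$-orbit of a point $(x,y)$ is exactly $Gx\times Hy$, so the ``jump'' condition $(G\times H)\sigma_i(1)=(G\times H)\sigma_{i+1}(0)$ holds if and only if both $G\gamma_i(1)=G\gamma_{i+1}(0)$ and $H\delta_i(1)=H\delta_{i+1}(0)$. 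Therefore the homeomorphism restricts to
\[
\mathcal{P}_n(X\times Y)\ \cong\ \mathcal{P}_n(X)\times\mathcal{P}_n(Y),
\]
and, after the canonical shuffle $(X\times Y)\times(X\times Y)\cong(X\times X)\times(Y\times Y)$, it carries the evaluation map $\evaln{X\times Y}$ to the product map $\evaln{X}\times\evaln{Y}$. Since each of $\evaln{X}$, $\evaln{Y}$ is a fibration (\cite{BlaKal}), so is their product, and consequently $\TC^{G\times H,n}(X\times Y)=\secat(\evaln{X\times Y})=\secat(\evaln{X}\times\evaln{Y})$.

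The second step is the ordinary (non-equivariant) product inequality for sectional category, $\secat(p\times p')\leq\secat(p)+\secat(p')$ for fibrations $p,p'$ over paracompact bases. This is precisely the argument Farber gives for $\TC(X\times Y)\leq\TC(X)+\TC(Y)$ in \cite[Theorem~11]{Far03}: one takes open covers $\{U_i\}_{i=0}^a$ of the base of $p$ and $\{V_j\}_{j=0}^b$ of the base of $p'$ admitting local sections, together with subordinate partitions of unity, and uses them to merge the $(a+1)(b+1)$ product open sets $U_i\times V_j$ (each carrying the product $(u,v)\mapsto(s_i(u),t_j(v))$ of the local sections) into $a+b+1$ open sets indexed by the value of $i+j$, in such a way that the sections still patch continuously. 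In contrast to the proofs of Theorems~\ref{thm:productTC_G} and~\ref{thm:productGH}, \emph{no equivariant partitions of unity are needed here}, since in the definition of $\TC^{G,n}$ the sections are ordinary continuous maps and the covering sets are ordinary open sets; this is why the hypotheses involve only paracompactness rather than compactness of $G$ and $H$. Applying this with $p=\evaln{X}$ and $p'=\evaln{Y}$, and using $\secat(\evaln{X})=\TC^{G,n}(X)$, $\secat(\evaln{Y})=\TC^{H,n}(Y)$, yields the finite-level inequality, and the theorem follows.

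In truth this result is less delicate than the earlier product formulae, and the main point requiring care is the bookkeeping in the first step: one must verify that the broken-path condition for the \emph{product} action decouples into the two separate broken-path conditions — which rests entirely on the orbit identity $(G\times H)(x,y)=Gx\times Hy$ — and that the resulting identification is compatible with the evaluation fibrations. A secondary, purely point-set, matter is that the merging in the second step requires partitions of unity on $X\times X$, on $Y\times Y$, and on their product, so one wants these products to be paracompact; this is harmless if, for instance, $X$ and $Y$ are metrizable, since then every product in sight is metrizable.
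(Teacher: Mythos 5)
Your proof is correct and takes what is essentially the intended route: identify $\mathcal{P}_n(X\times Y)$ with $\mathcal{P}_n(X)\times\mathcal{P}_n(Y)$ via the orbit identity $(G\times H)(x,y)=Gx\times Hy$, observe that this carries $\pi_{n,X\times Y}$ to $\pi_{n,X}\times\pi_{n,Y}$ up to the shuffle homeomorphism, apply the standard additive product bound for sectional category of fibrations, and pass to the stable value using \cite[Lemma 3.2]{BlaKal}. This is also how B\l{}aszczyk and Kaluba argue.

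One small point in your ``secondary, purely point-set'' remark is off, though not in a way that damages the argument. You do not actually need a partition of unity produced by paracompactness of the large product $(X\times X)\times(Y\times Y)$: given partitions of unity $\{f_i\}$ on $X\times X$ and $\{g_j\}$ on $Y\times Y$ subordinate to the two covers, the functions $f_i\cdot g_j$ (pulled back along the two projections) already form a partition of unity on $(X\times X)\times(Y\times Y)$ subordinate to $\{U_i\times V_j\}$, with no further hypotheses. The point that does merit care, and which you might flag instead, is that the theorem's stated hypothesis is paracompactness of $X$ and $Y$, whereas the Schwarz/Farber merging argument requires a partition of unity on $X\times X$ and on $Y\times Y$, and paracompactness is not in general preserved by finite products. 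In practice this is harmless under the usual standing assumptions in the subject (metrizable or CW), as you note, but it is the squares $X\times X$, $Y\times Y$, not their product, whose paracompactness (or normality) one actually invokes.
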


If $G=H$ and we replace the product action in the above with the diagonal $G$-action, then the analogous inequality is false in general, as illustrated by the following example.

\begin{ex}[{\cite{BlaKal}}]
Let $G=C_2$ act on $S^1\subseteq \C$ via complex conjugation. Then $\TC^{C_2,\infty}(S^1)=0$: an effective motion planner on $X\times X$ is described in \cite[Proposition 5.7]{BlaKal}. The diagonal $C_2$-action on $S^1\times S^1$ is best visualized as a rotation through $\pi$ radians of the standard embedded torus in $\R^3$ around an axis (think of barbecuing a donut). The orbit space $(S^1\times S^1)/C_2$ is an orbifold homeomorphic to $S^2$ with $4$ singular points (think of a pillowcase). Now an application of Theorem \ref{thm:zdcleff} with rational coefficients gives
\[
\TC^{C_2,\infty}(S^1\times S^1)\geq 2 >0 = \TC^{C_2,\infty}(S^1) + \TC^{C_2,\infty}(S^1).
\]
\end{ex}

\begin{rem} The preprint \cite{BalTorres} by Balzer and Torres-Giese introduces sequential versions of the effective and effectual topological complexities, as well as a further variant they call the \emph{orbital topological complexity}. Basic properties are derived, and computations are given for involutions on spheres and surfaces.
\end{rem}

\section{Problems}\label{S:Problems}

We conclude this survey with a selection of open problems, which we hope will stimulate further research.

\begin{prob}
Give an example of a $G$-space $X$ and multiplicative $G$-equivariant cohomology theory $h^*_G$ for which
\[
\nil\ker \big(\Delta^*:h^*_G(X\times X)\to h^*_G(X)\big)>\max\{\TC(X^H) \mid H\leq G\},
\]
that is, for which the lower bound for $\TC_G(X)$ coming from equivariant cohomology exceeds the lower bound coming from the topological complexity of the fixed sets. 
\end{prob}

\begin{prob}
Give an example of a $G$-space $X$ for which $\TC^*_G(X)>\TC_G(X)$. (Proposition \ref{prop:lowerTC^*_G} and Remark \ref{rem:lowerTC^*_G} may be useful in this regard.)
\end{prob}

\begin{prob}[Z.\ B\l{}aszczyk, M.\ Kaluba]
For which $G$-spaces $X$ is $\TC^{G,\infty}(X)=0$?
\end{prob}

\begin{prob}[Z.\ B\l{}aszczyk, J.\ Gonz\'alez, M.\ Kaluba]
Compute $\TC^{C_2,\infty}(\Sigma_g)$ for the antipodal involution on an orientable surface of genus $g\ge2$. 
\end{prob}

\begin{prob}[Z.\ B\l{}aszczyk, M.\ Kaluba \cite{BlaKal2}]
Is it true that if $S^n$ is a smooth $C_p$-sphere with non-empty and path connected
fixed point set, then $S^n$ is equivariantly equivalent to a linear $C_p$-sphere
if and only if $\TC^{C_p}(S^n)\le 2$? (By Smith Theory, it would suffice to show that $\TC(\Sigma)\ge 3$ for any mod $p$ homology sphere $\Sigma$.)
\end{prob}

Let $\pi$ be a discrete group equipped with an action of a finite group $G$ by automorphisms. Briefly, we call $\pi$ a \emph{$G$-group}.

\begin{prob}
Define the equivariant topological complexity $\TC_G(\pi)$ of the $G$-group $\pi$. Prove equivariant analogues of the results in \cite{GLO} and \cite{FGLOaspherical}.
\end{prob}

\begin{prob}
Define $\TC^G(\pi)$, $\TC^*_G(\pi)$ and $\TC^{G,\infty}(\pi)$. Do any of these admit an algebraic description in terms of equivariant group cohomology? (Compare \cite{GrantMeirPatchkoria}, where $\cat_G(\pi)$ is defined and shown to be equal to an equivariant cohomological dimension $\cat_G(\pi)$.)
\end{prob}

\end{document}